      \theoremstyle{plain}
      \newtheorem{theorem}{Theorem}[subsection]
      \newtheorem*{theorem*}{Theorem}
      \newtheorem{lemma}[theorem]{Lemma}
      \newtheorem*{lemma*}{Lemma}
      \newtheorem{corollary}[theorem]{Corollary}
      \newtheorem*{corollary*}{Corollary}
      \newtheorem*{claim*}{Claim}
      \newtheorem{theoremSect}{Theorem}[section]
      \newtheorem{lemmaSect}[theoremSect]{Lemma}
      \theoremstyle{definition}
      \newtheorem{definition}[theorem]{Definition}
      \newtheorem*{definition*}{Definition}
      \newtheorem{definitionSect}[theoremSect]{Definition}
      \theoremstyle{remark}
      \newtheorem{remark}[theorem]{Remark}
      \newtheorem*{remark*}{Remark}
      \newtheorem*{example*}{Example}
\DeclareMathOperator{\supp}{supp}
\DeclareMathOperator*{\esssup}{ess\,sup}
\DeclareMathOperator{\pv}{p.v.}
\providecommand{\wbar}[1]{\overline{#1}}
\providecommand{\what}[1]{\widehat{#1}}
\providecommand{\F}{\mathscr{F}}
\providecommand{\Ca}{\mathscr{C}}
\providecommand{\C}{\mathbb{C}}
\providecommand{\R}{\mathbb{R}}
\providecommand{\N}{\mathbb{N}}
\providecommand{\Tr}{\operatorname{Tr}}
\renewcommand\Re{\operatorname{\mathfrak{Re}}}
\providecommand{\sgn}{\operatorname{sgn}}
\title{The inverse problem of the Schr\"odinger equation in the plane\\ A dissection of Bukhgeim's result}
\author{Eemeli Bl{\aa}sten}
\begin{document}
\begin{titlepage}
\centering
\vspace*{2cm}
\textsc{{\LARGE The inverse problem of the Schr\"odinger equation in the plane\\}\medskip{\Large A dissection of Bukhgeim's result}}

\vspace{2cm}
\textsc{\large Eemeli Bl{\aa}sten}

\vspace{1cm}
March 2011

\vfill
\begin{flushright}
\textsc{University of Helsinki}\\
Department of Mathematics and Statistics\\
Licentiate thesis\\
Advisor: Professor Lassi P\"aiv\"arinta
\end{flushright}
\end{titlepage}

\newpage
\thispagestyle{empty}
\section*{Forewords}
This is a revised version of my licentiate thesis. The original one was completed in June 2010. This version is partly rewritten and to be published in arXiv. These two versions differ a little, but only in the layout, comments and descriptions. Some mathematical typos were corrected and a cited preprint had appeared in a journal. All the theorems and proofs remain the same.

\vskip 2cm
\begin{flushright}
Eemeli Bl{\aa}sten,\\
March 2011
\end{flushright}

\newpage
\tableofcontents

\newpage
\section{Abstract}
The purpose of this licentiate thesis is to present Bukhgeim's result of 2007, which solves the inverse boundary value problem of the Schr\"odinger equation in the plane. The thesis is mainly based on Bukhgeim's paper \cite{bukhgeim} and Kari Astala's seminar talk, which he gave the 11\textsuperscript{th} and 18\textsuperscript{th} September of 2008 at the University of Helsinki.

Section \ref{sect_history} is devoted to the history and past results concerning some related problems: notably the inverse problem of the Schr\"odinger and conductivity equations in different settings. We also describe why some of the past methods do not work in the general case in a plane domain.

Section \ref{sect_summary} outlines Bukhgeim's result and sketches out the proof. This proof is a streamlined version of the one in \cite{bukhgeim} with the stationary phase method based on Kari Astala's presentation. In the following section we prove all the needed lemmas which are combined in section \ref{sect_proof} to prove the solvability of the inverse problem.

The idea of the proof is simple. Given two Schr\"odinger equations with the same boundary data we get an orthogonality relation for the solutions of the two equations. Then we show the existence of certain oscillating solutions and insert these into the orthogonality relation. Then by a stationary phase argument we see that the two Schr\"odinger equations are the same.

In the last section we contemplate an unclear detail in \cite{bukhgeim} which Kari Astala pointed out in his seminar talk: without an extra argument Bukhgeim's proof shows the solvability of the inverse problem only for differentiable potentials instead of ones in $L^p(\Omega)$. But the special oscillating solutions exist even for $L^p(\Omega)$ potentials.

\newpage
\section{Notation}

We use the following notation related to sets, functions and differential operators. We identify $\R^2$ and $\C$.
\begin{itemize}
\item $\Omega$ denotes the open unit disc centered at the origin of $\R^2$.

\item If $X\subset \R^2$ then $\partial X$ is its boundary.

\item $R = (z-z_0)^2 + (\wbar{z}-\wbar{z_0})^2$, where $z, z_0 \in \C$ are clear from the context.

\item $\partial_j$ denotes differentiation with respect to the variable $x_j$.

\item $\partial_n$ denotes the normal derivative on some boundary.

\item $\partial = \frac{1}{2}(\partial_1 - i \partial_2)$ and $\wbar{\partial}= \frac{1}{2}(\partial_1 + i \partial_2)$ denote the two complex derivatives.

\item $\Delta = \partial_1^2 + \partial_2^2 = 4\partial\wbar{\partial}$ is the Laplacian.
\end{itemize}

\begin{definition*}[Function spaces]
We will need the following few function spaces. The notation is standard except for the space of piecewise $W^{1,p}$ functions, which we denote by $W^{1,p}_\oplus$.

\begin{itemize}
\item $L^p(X)$, $1\leq p \leq \infty$, denotes the space of measurable functions $f$ such that $|f|^p$ is integrable when $p<\infty$ or such that $|f|$ is essentially bounded when $p=\infty$. Sometimes we write $L^p(X,z)$ to denote that the variable of $f$ is $z$. We use the norms
\begin{equation}
\begin{split}
\lVert f \rVert_{L^p(X)} &= \Big( \int_X |f(z)|^p \,dm(z) \Big)^{1/p},\quad p < \infty,\\
\lVert f \rVert_{L^\infty(X)} &= \esssup_{z\in X} |f(z)|.
\end{split}
\end{equation}

\item $C^\alpha(\wbar{X})$, $0 < \alpha < 1$, denotes the \emph{H\"older space} of continuous functions $f:\wbar{X} \to \C$ for which the H\"older norm
\begin{equation}
\lVert f \rVert_{C^\alpha(\wbar{X})} = \sup_{z_\in\wbar{X}} |f(z)| + \sup_{z_0\neq z_1} \frac{|f(z_0) - f(z_1)|}{|z_0-z_1|^\alpha}
\end{equation}
is finite. If $\alpha = 1$ we denote the space by $Lip(\wbar{X})$ instead of $C^1(\wbar{X})$. This is only used in one of the lemmas in the appendix.

\item $C^k(\wbar{X})$ with $k \in \N$ denotes the space of $k$ times continuously differentiable functions $\wbar{X}\to\C$. If $d$ is the dimention of the space, the norm is
\begin{equation}
\lVert f \rVert_{C^k(\wbar{X})} = \sum_{m_1+\cdots+m_d \leq k} \, \sup_{z\in\wbar{X}} \lvert \partial_1^{m_1} \cdots \partial_d^{m_d} f(z) \rvert.
\end{equation}

\item $W^{k,p}(X)$, $k\in\N$, $1\leq p \leq \infty$, denotes the \emph{Sobolev space} consisting of $L^p(X)$ functions $f$ whose distribution derivates up to order $k$ also belong to $L^p(X)$. The norm is defined by
\begin{equation}
\begin{split}
\lVert f \rVert_{W^{k,p}(X)} &= \Big( \sum_{|\alpha| \leq k} \big\lVert \partial^\alpha f \big\rVert_{L^p(X)}^p \Big)^{1/p}, p<\infty, \\
\lVert f \rVert_{W^{k,\infty}(X)} &= \max_{|\alpha| \leq k} \lVert \partial^\alpha f \rVert_{L^\infty(X)}.
\end{split}
\end{equation}

\item $W^{k,p}_\oplus(\Omega)$, $k \in \N$, $1\leq p \leq \infty$, denotes the space of piecewise $W^{k,p}$ functions. More precisely $W^{k,p}_\oplus(\Omega)$ is the set of those $f \in L^p(\Omega)$ for which there exist finitely many pairwise disjoint open sets $\Omega_j$ with Lipschitz boundary such that $f|_{\Omega_j} \in W^{k,p}(\Omega_j)$ and $\wbar{\Omega} = \cup \wbar{\Omega_j}$.
For the norm we define $\lVert f \rVert_{W^{k,p}_\oplus(\Omega)} = \big(\sum_j \lVert f_{|\Omega_j} \rVert_{W^{k,p}(\Omega_j)}^p \big)^{1/p}$ if $p<\infty$ and $\lVert f \rVert_{W^{k,\infty}_\oplus(\Omega)} = \max_j \lVert f_{|\Omega_j} \rVert_{W^{k,\infty}(\Omega_j)}$. Note that this definition of the norms does not depend on the choice of the sets $\Omega_j$.
\end{itemize}
\end{definition*}

\begin{definition*}
Given $s \in ]0,\infty]$ we denote by $\lVert \cdot \rVert_s$ either the $L^s(\Omega)$ or the $C^s(\wbar{\Omega})$ H\"older norm. The $L^s(\Omega)$ norm is chosen when $s \geq 1$ and the H\"older norm when $0<s<1$.
\end{definition*}

\newpage
\section{History}
\label{sect_history}
This small review of results concerning the inverse boundary value problems for the conductivity and Schr\"odinger equations is based on introductions in \cite{astalaPaivarinta} and \cite{nachman}. The inverse problem for the conductivity equation can be reduced to that of the Schr\"odinger equation. To transform the conductivity equation $\nabla \cdot (\gamma \nabla u) = 0$ into the Schr\"odinger equation $\Delta v + q v = 0$ it is enough to do the change of variables $u = \gamma^{-\frac{1}{2}} v$, $q = -\gamma^{-\frac{1}{2}} \Delta \gamma^{\frac{1}{2}}$. The Dirichlet-to-Neumann map for the new equation can be recovered from the boundary data of the old one: $\Lambda_q = \gamma^{-\frac{1}{2}}\big( \Lambda_\gamma + \frac{1}{2}\frac{\partial \gamma}{\partial n} \big) \gamma^{-\frac{1}{2}}$. The majority of the results cited below were proven for the conductivity equation or the Schr\"odinger equation having a potential of the conductivity type.

One of the early important papers on inverse boundary value problems is the famous paper of Calderón \cite{calderon}.
He considered an isotropic body $\Omega$ from which one would like to deduce the electrical conductivity $\gamma$ by doing electrical measurements on the boundary. If we keep the voltage $u$ fixed as $f$ on the boundary, then $u$ solves the boundary value problem
\begin{equation}
\begin{split}
\nabla \cdot (\gamma \nabla u) &= 0, \quad \Omega\\
u &= f, \quad \partial\Omega .
\end{split}
\end{equation}
The weighted normal derivative $\gamma \partial_n u$ is the current flux going out of $\Omega$. Calderón asked whether the knowledge of the Dirichlet-to-Neumann map $\Lambda_\gamma : f \mapsto \gamma \partial_n u_{|\partial\Omega}$ determines the conductivity $\gamma$ inside the whole domain $\Omega$. He was only able to show the injectivity of a linearized problem near $\gamma \equiv 1$.

Sylvester and Uhlmann solved the problem in dimensions $d\geq 3$ for smooth conductivities bounded away from zero \cite{sylvesterUhlmann}. They constructed solutions of the form $u_j = e^{x\cdot \,\zeta_j}\big( 1 + O(\frac{1}{|\zeta_j|}) \big)$, where the complex vectors $\zeta_j$ satisfy
\begin{equation}
\label{history_G_U_vectors}
\begin{split}
\zeta_1 &= i(k+m) + l,\\
\zeta_2 &= i(k - m) - l,
\end{split}
\end{equation}
where $l,k,m \in \R^d$ are perpendicular vectors satisfying $|l|^2 = |k|^2 + |m|^2$. Using a well-known orthogonality relation for the potentials $q_1$ and $q_2$ they got
\begin{equation}
0 = \int (q_1 - q_2) u_1 u_2 dx= \int (q_1 - q_2) e^{2i x \cdot k} \big( 1 + O(\frac{1}{|m|})\big) dx,
\end{equation}
and after taking $|m| \longrightarrow \infty$ they saw that the Fourier transforms of $q_1$ and $q_2$ are the same, so the potentials are too. Note that the only part that requires $d\geq 3$ in this solution is the existence of the three vectors $l,k,m$.

Some papers solve the Calderón problem in dimension two with various assumptions. Namely Kohn and Vogelius \cite{KVI} \cite{KVII}, Alessandrini \cite{alessandrini}, Nachman \cite{nachman} and finally Astala and P\"aiv\"arinta \cite{astalaPaivarinta}. Of these the first three require the conductivity to be piecewise analytic. Nachman required two derivatives to convert the conductivity equation into the Schr\"odringer equation. The paper of Astala and P\"aiv\"arinta solved Calderón's problem most generally: there were no requirements on the smoothness of the conductivity. It just had to be bounded away from zero and infinity, which is physically realistic.

There are also some results for the inverse boundary value problem of the Schr\"odinger equation whose potential is not assumed to be of the conductivity type. Jerison and Kenig proved in \cite{jerisonKenig} that if $q\in L^p(\Omega)$, $p>\frac{d}{2}$, $d\geq 3$, then the Dirichlet-to-Neumann map $\Lambda_q$ determines the potential $q$ uniquely. The case $d=2$ was open until the paper of Bukhgeim. He introduced in \cite{bukhgeim} new kinds of solutions to the Schr\"odinger equation, which allow the use of stationary phase. This led to the elegant solution of this long standing open problem. There is a point in the reasoning that is difficult to understand unless one assumes some differentiability for the potentials. In this thesis we try to clarify that point.

After \cite{bukhgeim} the new results in two dimensions have concerned the case of partial data i.e. whether or not the boundary data given only on a subset of the boundary determines the potential.
The two best results are from Imanuvilov, Uhlmann, Yamamoto \cite{imanuvilovUhlmannYamamoto} and Guillarmou and Tzou \cite{guillarmouTzou}. In the first paper the authors consider the Schr\"odinger equation in a plane domain and in the second one on a Riemann surface with boundary. The results of both papers state that knowing the Cauchy data on any open subset on the boundary determines the potential uniquely if it is smooth enough.

\newpage
\section{Summary}
\label{sect_summary}
In this licentiate thesis we present a streamlined proof of Bukhgeim's theorem, which is the uniqueness result for the inverse problem of the Schr\"odinger equation in a plane domain. The thesis is mainly based on two sources. The first one is Bukhgeim's paper \cite{bukhgeim} presenting his result. The second one is a talk given by Kari Astala in a seminar of functional analysis at the University of Helsinki the 11\textsuperscript{th} and 18\textsuperscript{th} of September in 2008. He presented Bukhgeim's proof and pointed out a problem in the density argument.

\subsection{The result}
Bukhgeim's theorem claims that if $q \in L^p(\Omega)$, $p>2$ then the boundary data $C_q$ for the Schr\"odinger equation $\Delta u + q u = 0$ in the unit disc on the plane determines $q$ uniquely. Because of an unclarity described in section \ref{mistakeAnalysis} we will only prove the theorem for piecewise $W^{1,p}$ potentials.
\begin{theorem*}
If $q_1, q_2\in W_\oplus^{1,p} (\Omega)$, $p>2$ and $C_{q_1} = C_{q_2}$, then $q_1 = q_2$.
\end{theorem*}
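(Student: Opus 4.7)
The plan is to combine the orthogonality identity supplied by $C_{q_1}=C_{q_2}$ with Bukhgeim's oscillating CGO solutions and then localize to each point via a stationary phase argument. Let $q = q_1 - q_2$; the goal is to show $q \equiv 0$. First I would derive the orthogonality relation: if $u_j$ solves $\Delta u_j + q_j u_j = 0$ in $\Omega$ for $j = 1,2$, then $C_{q_1} = C_{q_2}$ forces
\begin{equation*}
\int_\Omega q(z) \, u_1(z) u_2(z) \, dm(z) = 0.
\end{equation*}
This is the standard consequence of Green's second identity together with the fact that any Cauchy datum of $u_1$ is also realised by some solution of the $q_2$-equation, with matching normal derivative.

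Second, fixing an arbitrary $z_0 \in \Omega$ and a real parameter $\tau \gg 1$, I would construct Bukhgeim's CGO solutions of the form
\begin{equation*}
u_1(z) = e^{i\tau (z-z_0)^2}\bigl(1 + r_1(z,\tau)\bigr), \qquad u_2(z) = e^{i\tau (\wbar{z}-\wbar{z_0})^2}\bigl(1 + r_2(z,\tau)\bigr),
\end{equation*}
so that $u_1 u_2 = e^{i\tau R}(1+r_1)(1+r_2)$ with $R$ as in the notation section. The first exponential factor is holomorphic in $z$ and the second antiholomorphic, so substituting into the Schr\"odinger equations yields $\bar\partial$- and $\partial$-type equations for $r_j$ that can be inverted by composing the Cauchy transform with multiplication by $q_j$; for large $\tau$ the resulting integral equation is a contraction, and $\lVert r_j \rVert_s \to 0$ as $\tau \to \infty$ for a suitable $s$. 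The decisive geometric feature is that $R = 2\Re((z-z_0)^2)$ has a single nondegenerate critical point at $z_0$, a saddle of signature $(1,1)$, so $\tau R$ is precisely of stationary phase type in two dimensions.

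Third, inserting the CGOs into the orthogonality relation gives
\begin{equation*}
0 = \int_\Omega q(z) \, e^{i\tau R(z,z_0)} \bigl(1 + r_1 + r_2 + r_1 r_2\bigr)\, dm(z).
\end{equation*}
Two-dimensional stationary phase applied to the leading integral $\int q \, e^{i\tau R}\, dm$ produces a main contribution of order $c\,\tau^{-1} q(z_0)$ with a nonzero constant $c$ determined by the Hessian of $R$ at $z_0$. The terms containing $r_j$ must be shown to be $o(\tau^{-1})$ via H\"older's inequality together with the decay $\lVert r_j \rVert_s \to 0$. Multiplying by $\tau$ and sending $\tau \to \infty$ forces $q(z_0) = 0$. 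Since $z_0 \in \Omega$ was arbitrary, $q_1 \equiv q_2$.

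The hard part is the stationary phase step under weak regularity of $q$. Classical stationary phase extracts the $\tau^{-1}$ term either by Taylor expanding the amplitude at the critical point or by a direct integration by parts, both of which require more than $L^p$ control on $q$. The hypothesis $q_j \in W^{1,p}_\oplus(\Omega)$ is calibrated precisely to allow this: on each smooth piece $\Omega_j$ one can integrate by parts against $e^{i\tau R}$ using $\partial$ or $\bar\partial$, with boundary contributions controlled by the Lipschitz regularity of $\partial\Omega_j$ and interior contributions controlled by $\nabla q \in L^p$ with $p>2$; assembling the pieces yields the required $o(\tau^{-1})$ remainder. This is exactly the \emph{unclear detail} in \cite{bukhgeim} flagged in the Summary: the construction of CGOs works already for $q_j \in L^p$, but the stationary phase extraction of pointwise values demands the extra piecewise derivative.
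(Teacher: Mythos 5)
Your outline matches the paper's three-step structure (orthogonality, Bukhgeim CGO solutions $u_1 u_2 = e^{inR}(1 + r_1)(1 + r_2)$, stationary phase at the nondegenerate saddle of $R$), and the first two steps are essentially correct. There is, however, a genuine gap in step three: the claim that the terms containing $r_j$ can be shown to be $o(\tau^{-1})$ ``via H\"older's inequality together with the decay $\lVert r_j\rVert_s \to 0$.'' That estimate does not close. The CGO remainders decay only like $\tau^{-\beta}$ for a small $\beta\in(0,1)$ (in the paper $\beta = \tfrac{2}{p(2p+1)}$, see Lemma~\ref{lemma1}), while the orthogonality identity is multiplied by $\tau$ to isolate the main contribution. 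A direct H\"older bound on $\int_\Omega (q_1-q_2)\, e^{i\tau R}\, r_j\, dm$ gives only $O(\tau^{-\beta})$, hence after multiplication by $\tau$ the remainder contribution is bounded by $O(\tau^{1-\beta})$, which \emph{diverges}. Nor can one blindly exploit the oscillation of $e^{i\tau R}$: $r_j$ depends on both $z_0$ and $\tau$, so it could in principle cancel the oscillation of the kernel. This is exactly the difficulty the paper isolates in Theorem~\ref{OPremainder}: the fix is to integrate by parts \emph{inside the remainder integral as well}, transferring derivatives onto $q$ (needing $\nabla q\in L^p$ on each piece, plus boundary terms on $\partial\Omega_j$) and onto $r_j$ (needing $\sup_{z_0}\lVert r_j\rVert_{W^{1,p}}\to 0$, not merely the H\"older or sup norm --- this is Lemma~\ref{restNorms}). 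Your sketch applies the integration-by-parts smoothness argument only to the principal term and leaves the $r_j$ terms controlled by an estimate that blows up.

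A secondary, less serious difference concerns where the smoothness of $q$ is used. You extract the pointwise value $q(z_0)$ by a classical stationary phase expansion at the saddle and attribute the need for $W^{1,p}_\oplus$ to that pointwise extraction. The paper instead avoids pointwise asymptotics entirely: it computes the Fourier transform of the Gaussian kernel $\kappa_n$ (Lemma~\ref{convKernTransf}) and proves via Parseval that $\lVert T_n f - f\rVert_{L^2(\Omega)}\to 0$ for \emph{every} $f\in L^p$, $p\geq 2$, with no smoothness required (Theorems~\ref{OPdefThm}--\ref{OPthm}), then concludes in Theorem~\ref{FINALtheorem} that $\lVert q_1-q_2\rVert_{L^2}=0$. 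In that version the $W^{1,p}_\oplus$ hypothesis enters only through the remainder estimate, which makes explicit that the remainder term is the sole obstruction to handling singular $q\in L^p$. Your pointwise route could in principle be made to work since $W^{1,p}_\oplus$ gives piecewise H\"older continuity, but the $L^2$ formulation is cleaner, more robust, and correctly locates the analytical difficulty.
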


\subsection{Sketch of the proof}
The proof is divided into three independent steps. First we see that if $q_1$ and $q_2$ are two potentials giving the same boundary measurements then
\begin{equation}
\label{intro_ort}
\int_\Omega (q_1 - q_2)u_1 u_2 dm = 0
\end{equation}
for solutions $u_j$ of the Schr\"odinger equations $\Delta u_j + q_j u_j = 0$. The idea for this kind of orthogonality relation first appeared in Calderón's paper \cite{calderon}.

The next step is to show the existence of solutions $u_j$ which could give information about $q_1-q_2$ when plugged into the orthogonality relation \eqref{intro_ort}. One idea would be to create solutions such that $u_1u_2$ would be localized near some given point. Unfortunately because of maximum principles these kinds of solutions do not work for a large class of potentials $q_j$.

A more fruitful approach is to use the stationary phase method, i.e. to create solutions that oscillate wildly everywhere except near a given point $z_0 \in \Omega$. Thus the solution is zero in the mean everywhere except at $z_0$. These solutions with the relation \eqref{intro_ort} would tell us that $q_1(z_0) = q_2(z_0)$. The last step consists of proving this claim.
\medskip

The technical details are a bit more involved. Bukhgeim's ingenuity was to look for solutions of the form $u_1 = e^{in(z-z_0)^2}(1+r)$, $u_2 = e^{in(\wbar{z}-\wbar{z_0})^2}(1+s)$. To show the existence he used methods from \cite{vekua}. With the help of an auxiliary function $g$ he does the reduction
\begin{equation}
\Delta( e^{in(z-z_0)^2}f) + qe^{in(z-z_0)^2}f = 0 \Leftrightarrow \begin{cases}2\wbar{\partial}f &= e^{-inR} g\\ 2\partial g &= -qe^{inR}f \end{cases},
\end{equation}
where $R = (z-z_0)^2 + (\wbar{z}-\wbar{z_0})^2$. Then he transforms this into an integral equations by using the Cauchy operator $\Ca$ to get
\begin{equation}
\label{intro_int_eq}
f = 1 -\frac{1}{4}\Ca\big( e^{-inR} \wbar{\Ca}(e^{inR}q f)\big).
\end{equation}
To prove the existence of the solutions it remains to show that this operator is a contraction in a suitable Banach space when $n$ is large.

The last step of the whole proof is to show that the stationary phase method really works. After plugging the solutions gotten before into the orthogonality relation \eqref{intro_ort} we get
\begin{equation}
\int_\Omega e^{inR}(q_1 - q_2)(1 + r_n) dm = 0,
\end{equation}
where the H\"older norm of $r_n$ tends to zero. This step has two stages. First we show that $\int \frac{2n}{\pi} e^{inR}f dm$ tends to $f$ when $n$ grows. Then it remains to show that the remainder term $r_n$ causes no problems.

Bukhgeim showed that if $a$ is differentiable then
\begin{equation}
\label{intro_operator_id}
\int_\Omega \frac{2n}{\pi} e^{inR} a \,dm \longrightarrow a(z_0)
\end{equation}
and that the term with $r_n$ tends to zero. Unfortunately this does not imply the same for $a = q_1 - q_2 \in L^p(\Omega)$ even though $C^1(\wbar{\Omega})$ is dense in $L^p(\Omega)$. For a more in-depth analysis of this unclarity see section \ref{mistakeAnalysis}.

By assuming $q_1$ and $q_2$ to be piecewise in $W^{1,p}(\Omega)$, $p>2$ and it is possible to see that the stationary phase method works here. Basically it meant cleaning up Bukhgeim's paper of redundant argumentations and proving norm estimates for the stationary phase instead of using pointwise limits. Note that when $p>2$ the Sobolev space $W^{1,p}(\Omega)$ embeds into the H\"older space $C^{1-\frac{2}{p}}(\wbar{\Omega})$, so the case of singular potentials remains open.

\newpage
\section{Three steps}
This section contains the three main theorems used to prove the solvability of the inverse problem for the Schr\"odinger equation. First we show an $L^2$ orthogonality relation between solutions of the Schr\"odinger equations corresponding to potentials giving the same boundary data. The goal of the second subsection is to prove norm estimates guaranteeing that the stationary phase method works. The last subsection is devoted to proving the existence of suitably oscillating solutions to the Schr\"odinger equation.

\subsection{Orthogonality}
\label{orthogonality}

In this part we will define the set of boundary data corresponding to a potential $q$ and show that there is an orthogonality relation between solutions of Schr\"odinger equations having the same boundary data. The approach is well known now. According to an overview on inverse boundary value problems written by Sylvester \cite{sylvester} a variation of this relation first appeared in \cite{calderon} and its modern form appeared in \cite{alessandrini}.
\medskip

First we define the boundary values and normal derivative of a $W^{2,p}(\Omega)$ function. This is needed only to define the boundary data of the Schr\"odinger equation, so we may use quite sloppy norm estimates. The continuity of the trace mapping $\Tr: u \mapsto u_{|\partial\Omega}$ is proved in \cite[ch. 5.5, thm 1]{evans}. Because $\Omega$ is the unit disc we may define the normal derivative $\partial_n u$ explicitly.

\begin{lemma}
\label{dnCont}
The differential operator $\partial_n : W^{2,p}(\Omega) \to L^p(\partial\Omega)$ mapping $u$ to its normal derivative on the boundary $\partial_n u(z) = \sum_{j=1}^2 z_j \Tr \partial_j u(z)$ is bounded when $1 \leq p \leq \infty$.
\end{lemma}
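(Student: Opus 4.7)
The plan is to reduce the claim to the continuity of the trace operator $\Tr : W^{1,p}(\Omega) \to L^p(\partial\Omega)$ that is cited from Evans just before the statement. First I would observe that if $u \in W^{2,p}(\Omega)$ then each partial derivative $\partial_j u$ automatically belongs to $W^{1,p}(\Omega)$, with $\lVert \partial_j u \rVert_{W^{1,p}(\Omega)} \leq \lVert u \rVert_{W^{2,p}(\Omega)}$ directly from the definition of the $W^{2,p}$ norm. For $1 \leq p < \infty$ the trace theorem then supplies a constant $C_p$ with
\[
\lVert \Tr \partial_j u \rVert_{L^p(\partial\Omega)} \leq C_p \lVert \partial_j u \rVert_{W^{1,p}(\Omega)} \leq C_p \lVert u \rVert_{W^{2,p}(\Omega)}, \qquad j=1,2.
\]

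Next I would use that $\Omega$ is the unit disc: on $\partial\Omega$ the coordinate functions satisfy $|z_j|\leq |z|=1$, so multiplication by $z_j$ is a contraction of $L^p(\partial\Omega)$. Stringing these pieces together with the triangle inequality gives
\[
\lVert \partial_n u \rVert_{L^p(\partial\Omega)} \leq \sum_{j=1}^2 \lVert z_j \Tr \partial_j u \rVert_{L^p(\partial\Omega)} \leq 2 C_p \lVert u \rVert_{W^{2,p}(\Omega)},
\]
which is the asserted boundedness.

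Finally I would say a word about the endpoint $p=\infty$, since Evans's statement of the trace theorem is usually restricted to finite exponents. On the Lipschitz domain $\Omega$ the space $W^{1,\infty}(\Omega)$ agrees with $Lip(\wbar{\Omega})$, so the trace is simply the ordinary restriction of a uniformly Lipschitz function to $\partial\Omega$ and the bound $\lVert \Tr v \rVert_{L^\infty(\partial\Omega)} \leq \lVert v \rVert_{W^{1,\infty}(\Omega)}$ is immediate; the rest of the argument then carries over verbatim.

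I do not expect any real obstacle here: the lemma is bookkeeping on top of Evans's trace theorem and the trivial pointwise bound $|z_j|\leq 1$ on the unit circle. The only minor subtlety is that $p=\infty$ has to be treated separately, and this is harmless because $W^{1,\infty}$ on a Lipschitz domain coincides with the Lipschitz class where the trace reduces to restriction.
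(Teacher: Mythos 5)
Your argument is essentially the paper's own proof: both decompose $\partial_n$ as $\sum_j m_j \circ \Tr \circ \partial_j$ and chain together boundedness of each factor, using $|z_j|\leq 1$ on the unit circle for the multiplication step. You add a small extra point of care — observing that Evans's trace theorem is stated for finite $p$ and treating $p=\infty$ via $W^{1,\infty}(\Omega)=Lip(\wbar{\Omega})$ — which the paper silently elides, so your version is if anything slightly more complete.
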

\begin{proof}
The trace mapping $\Tr : W^{1,p}(\Omega) \to L^p(\partial\Omega)$ is bounded \cite[ch. 5.5, thm 1]{evans} and so are the partial derivatives $\partial_j : W^{2,p}(\Omega) \to W^{1,p}(\Omega)$. The functions $z\mapsto z_j$ from $\partial\Omega$ to $\R$ are bounded, so the multiplication operator $m_j : u\mapsto z_j u$ is bounded in $L^p(\partial\Omega)$. Thus $\partial_n$ is bounded as a sum of the bounded maps $m_j \circ \Tr \circ \,\partial_j : W^{2,p}(\Omega) \to L^p(\partial\Omega)$.
\end{proof}

Now we can define the boundary data for the Schr{\"o}dinger equation. By defining the set of boundary data instead of the Dirichlet-to-Neumann map we avoid problems arising from the possible nonuniqueness of the Cauchy problem $\Delta u + q u = 0$, $\Tr u = f$.
\begin{definition}
Given a measurable function $q:\Omega \to \C$, we define by
\begin{equation}
C_q = \big\{ (\Tr u, \partial_n u) \mid u \in W^{2,p}(\Omega), \Delta u + q u = 0 \big\}
\end{equation}
the \emph{Cauchy data} or \emph{boundary data} given by the potential $q$.
\end{definition}
\medskip

Our next task is to prove the orthogonality relation. The idea is to first transform $\int_\Omega (q_1 - q_2) u_1u_2$ into an integral over the boundary $\partial\Omega$. Then we show an integration by parts formula on the boundary. To finish we use the fact that the boundary data for both Schr\"odinger equations are the same. This will cancel out the boundary integral.

\begin{lemma}
\label{functionalToBoundary}
Let $q_1$ and $q_2$ be measurable functions $\Omega \to \C$. If $u_1, u_2 \in W^{2,p}(\Omega)$, $2 \leq p \leq \infty$ satisfy the equations
\begin{equation}
\left\{ \begin{array}{c}
	\Delta u_1 + q_1 u_1 = 0\\
	\Delta u_2 + q_2 u_2 = 0
\end{array} \right. ,
\end{equation}
then
\begin{equation}
\int_\Omega (q_1 - q_2) u_1 u_2 \,dm(z)  = \int_{\partial\Omega} u_1 \partial_n u_2 - u_2 \partial_n u_1 \,d\sigma(z).
\end{equation}
\end{lemma}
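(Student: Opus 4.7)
The identity is essentially Green's second identity rewritten using the Schrödinger equations. Since $u_1$ and $u_2$ satisfy $\Delta u_j = -q_j u_j$ pointwise a.e., I would begin by rewriting the integrand:
\begin{equation}
(q_1 - q_2) u_1 u_2 = -u_2(\Delta u_1) + u_1(\Delta u_2) = u_1 \Delta u_2 - u_2 \Delta u_1.
\end{equation}
So the claim reduces to the classical Green identity
\begin{equation}
\int_\Omega (u_1 \Delta u_2 - u_2 \Delta u_1)\,dm = \int_{\partial\Omega} (u_1 \partial_n u_2 - u_2 \partial_n u_1)\,d\sigma
\end{equation}
for $W^{2,p}$ functions on the unit disc. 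The main task is to justify this identity at the $W^{2,p}$ regularity level.

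My plan for the Green identity would be to apply integration by parts twice. For $u, v \in W^{1,p}(\Omega)$ with $p\geq 2$ (so that products lie in $L^2$ and traces are well-defined), the divergence theorem on the disc gives $\int_\Omega \partial_j(u v)\,dm = \int_{\partial\Omega} z_j\,uv\,d\sigma$. Applying this to $u = u_1$ and $v = \partial_j u_2$ and summing over $j$ yields
\begin{equation}
\int_\Omega u_1 \Delta u_2\,dm + \int_\Omega \nabla u_1 \cdot \nabla u_2\,dm = \int_{\partial\Omega} u_1 \partial_n u_2\,d\sigma,
\end{equation}
where on the boundary $\partial_n u_2 = \sum_j z_j \Tr \partial_j u_2$ as in Lemma \ref{dnCont}. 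Exchanging the roles of $u_1$ and $u_2$ and subtracting cancels the symmetric gradient term $\int_\Omega \nabla u_1\cdot \nabla u_2$ and produces exactly the desired identity.

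Each of these ingredients is legitimate in the $W^{2,p}$ setting with $p\geq 2$: by Lemma \ref{dnCont} (applied with $p$ and also with the H\"older conjugate index) the normal derivatives $\partial_n u_j$ lie in $L^p(\partial\Omega)$, the traces $\Tr u_j$ lie in $L^p(\partial\Omega)$ (indeed in $C(\partial\Omega)$ by Sobolev embedding in two dimensions), so all the boundary integrals are absolutely convergent; in the interior $\nabla u_j \in W^{1,p}(\Omega) \subset L^2(\Omega)$ and $u_j \in L^\infty(\Omega)$, which makes the volume integrals meaningful. The main obstacle is really a bookkeeping one: checking that the Gauss--Green formula applies at this regularity. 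This is standard and follows by approximating $u_1,u_2$ in the $W^{2,p}$ norm by $C^\infty(\wbar{\Omega})$ functions (the unit disc is smooth), applying classical Green to the approximants, and passing to the limit using the continuity of the trace and normal-derivative operators established in Lemma \ref{dnCont}.
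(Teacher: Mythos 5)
Your proposal is correct and follows essentially the same route as the paper: rewrite $(q_1-q_2)u_1u_2 = u_1\Delta u_2 - u_2\Delta u_1$ and then establish Green's second identity at the $W^{2,p}$ level by integration by parts on the pairs $(u_1,\partial_k u_2)$ and $(\partial_k u_1, u_2)$, cancelling the symmetric gradient term. The only cosmetic difference is that you re-derive the Sobolev integration-by-parts step via density of $C^\infty(\wbar\Omega)$ and continuity of the trace/normal-derivative operators, whereas the paper simply cites its appendix Lemma \ref{soboIntByParts} (applied with $p=q=2$, using $W^{2,p}\subset W^{1,2}$ on the bounded disc).
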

\begin{proof}
Note that $(q_1 - q_2) u_1 u_2 = u_1 \Delta u_2 - u_2 \Delta u_1$ and $\Delta = \partial_1^2 + \partial_2^2$. Because $u_1, u_2 \in W^{2,p}(\Omega)$, $p\geq 2$ we have $u_1, u_2, \partial_k u_1, \partial_k u_2 \in W^{1,2}(\Omega)$. Using integration by parts (lemma \ref{soboIntByParts}) to the function pairs $(u_1, \partial_1 u_2)$, $(u_1, \partial_2 u_2)$, $(\partial_1 u_1, u_2)$ and $(\partial_2 u_1, u_2)$ we get
\begin{equation}
\begin{split}
&\int_\Omega (q_1 - q_2)u_1 u_2 dm = \int_\Omega (u_1 \Delta u_2 - u_2 \Delta u_1) dm \\
&= \sum_{k=1}^2 \int_{\partial\Omega} n_k (u_1 \partial_k u_2 - u_2 \partial_k u_1) d\sigma - \sum_{k=1}^2 \int_\Omega (\partial_k u_1 \partial_k u_2 - \partial_k u_2 \partial_k u_1) dm \\
&= \int_{\partial\Omega} (u_1 n\cdot \nabla u_2 - u_2 n\cdot \nabla u_1) d\sigma = \int_{\partial\Omega} (u_1 \partial_n u_2 - u_2 \partial_n u_1) d\sigma.
\end{split}
\end{equation}
\end{proof}

\begin{corollary}
\label{switchOnBoundary}
If $q:\Omega \to \C$ is measurable and the functions $u,v \in W^{2,p}(\Omega)$, $2 \leq p \leq \infty$, satisfy $\Delta u + q u = 0$ and $\Delta v + q v = 0$, then
\begin{equation}
\int_{\partial\Omega} u \partial_n v \,d\sigma(z) = \int_{\partial\Omega} v \partial_n u \,d\sigma(z).
\end{equation}
\end{corollary}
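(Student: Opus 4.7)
The plan is to apply Lemma \ref{functionalToBoundary} with $q_1 = q_2 = q$, $u_1 = u$, $u_2 = v$. The hypotheses on $u$ and $v$ are exactly those required by the lemma, so we obtain
\begin{equation}
\int_\Omega (q - q) u v \, dm(z) = \int_{\partial\Omega} u \partial_n v - v \partial_n u \, d\sigma(z).
\end{equation}
The left-hand side is $0$, and rearranging the right-hand side yields the claimed identity. There is essentially no obstacle here: the corollary is a direct specialization of the preceding lemma to the case of a single potential, and all the analytic work (integration by parts, trace regularity, boundedness of $\partial_n$) has already been carried out in Lemma \ref{functionalToBoundary} and Lemma \ref{dnCont}. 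The only thing to check is that the regularity assumption $2 \leq p \leq \infty$ is the same in both statements, which it is, so the application is immediate.
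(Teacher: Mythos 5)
Your argument is exactly the paper's: set $q_1 = q_2 = q$ in Lemma \ref{functionalToBoundary}, note the left-hand side vanishes, and rearrange. Correct and identical in approach.
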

\begin{proof}
Choose $q_1 = q_2 = q$ in the previous lemma.
\end{proof}

\begin{theorem}
\label{ORTthm}
Let $q_1, q_2:\Omega \to \C$ be two measurable potentials such that $C_{q_1} = C_{q_2}$.
Then if $u_1, u_2 \in W^{2,p}(\Omega)$, $2\leq p \leq \infty$, satisfy $\Delta u_1 + q_1 u_1 = 0$ and $\Delta u_2 + q_2 u_2 = 0$, they also satisfy
\begin{equation}
\label{ORTrel}
\int_{\Omega} (q_1 - q_2) u_1 u_2 \,dm(z) = 0.
\end{equation}
\end{theorem}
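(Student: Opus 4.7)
The plan is to reduce everything to the two pieces already in hand: Lemma \ref{functionalToBoundary}, which converts the bulk integral $\int_\Omega (q_1-q_2)u_1u_2\,dm$ into a boundary integral, and Corollary \ref{switchOnBoundary}, which lets one swap which solution gets differentiated on $\partial\Omega$ provided both solve the Schr\"odinger equation with the \emph{same} potential. The bridge between these is the hypothesis $C_{q_1} = C_{q_2}$, which guarantees the existence of a companion solution with prescribed boundary data.

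First, I apply Lemma \ref{functionalToBoundary} directly to $u_1, u_2$ and the two potentials $q_1, q_2$; this is legitimate since $u_j \in W^{2,p}(\Omega)$ with $p \geq 2$, and it yields
\begin{equation}
\int_\Omega (q_1 - q_2)u_1 u_2 \,dm = \int_{\partial\Omega} \bigl( u_1 \partial_n u_2 - u_2 \partial_n u_1 \bigr) \, d\sigma.
\end{equation}
So the problem is reduced to showing that the right-hand side vanishes.

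Next, I use the hypothesis $C_{q_1} = C_{q_2}$. The pair $(\Tr u_1, \partial_n u_1)$ belongs to $C_{q_1}$ by construction, hence also to $C_{q_2}$, so there exists $\tilde u_1 \in W^{2,p}(\Omega)$ with $\Delta \tilde u_1 + q_2 \tilde u_1 = 0$ and $(\Tr \tilde u_1, \partial_n \tilde u_1) = (\Tr u_1, \partial_n u_1)$. Since the boundary integral depends only on the Cauchy data of $u_1$, I may replace $u_1$ by $\tilde u_1$ inside it:
\begin{equation}
\int_{\partial\Omega} \bigl( u_1 \partial_n u_2 - u_2 \partial_n u_1 \bigr) \, d\sigma = \int_{\partial\Omega} \bigl( \tilde u_1 \partial_n u_2 - u_2 \partial_n \tilde u_1 \bigr) \, d\sigma.
\end{equation}
Now both $\tilde u_1$ and $u_2$ satisfy Schr\"odinger's equation with the \emph{same} potential $q_2$, so Corollary \ref{switchOnBoundary} (with $q = q_2$, $u = \tilde u_1$, $v = u_2$) gives $\int_{\partial\Omega} \tilde u_1 \partial_n u_2 \, d\sigma = \int_{\partial\Omega} u_2 \partial_n \tilde u_1 \, d\sigma$, and the boundary integral collapses to zero.

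There is no serious obstacle: the whole argument is a bookkeeping exercise assembling two results already proved. The only point that requires a moment's care is noticing that one must introduce the auxiliary solution $\tilde u_1$ (associated to $q_2$ but with the Cauchy data of $u_1$) in order to put oneself in a position to apply Corollary \ref{switchOnBoundary}, which demands a common potential. This step is precisely where the assumption $C_{q_1} = C_{q_2}$ enters, and without it the argument would not close.
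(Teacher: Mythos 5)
Your proof is correct and follows essentially the same route as the paper: reduce to a boundary integral via Lemma \ref{functionalToBoundary}, use $C_{q_1}=C_{q_2}$ to produce an auxiliary solution of the $q_2$-equation with the Cauchy data of $u_1$ (your $\tilde u_1$ is the paper's $U_2$), and then invoke Corollary \ref{switchOnBoundary}. The only cosmetic difference is that you substitute $\tilde u_1$ for $u_1$ in both terms of the boundary integral at once, whereas the paper substitutes in the two terms separately; the content is identical.
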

\begin{proof}
By lemma \ref{functionalToBoundary} $\int_\Omega (q_1 - q_2) u_1 u_2 \,dm(z) = \int_{\partial\Omega} u_1 \partial_n u_2 - u_2 \partial_n u_1 \,d\sigma(z)$. Now let $U_2 \in W^{2,p}(\Omega)$ be a solution to the Cauchy problem
\begin{equation}
\begin{cases}
\Delta U_2 + q_2 U_2 = 0\\
U_{2|\partial\Omega} = u_{1|\partial\Omega}\\
\partial_n U_2 = \partial_n u_1
\end{cases}
\end{equation}
We know that $(u_{1|\partial\Omega}, \partial_n u_1) \in C_{q_1}$ so it is also in $C_{q_2}$. Thus $U_2$ exists.

By $U_{2|\partial\Omega} = u_{1|\partial\Omega}$ and corollary \ref{switchOnBoundary} we have
\begin{equation}
\int_{\partial\Omega} u_1 \partial_n u_2 \,d\sigma(z) = \int_{\partial\Omega} U_2 \partial_n u_2 \,d\sigma(z) = \int_{\partial\Omega} u_2 \partial_n U_2\,d\sigma(z).
\end{equation}
All in all
\begin{equation}
\int_{\Omega} (q_1 - q_2) u_1 u_2 \,dm(z) = \int_{\partial\Omega} u_2\partial_n U_2 - u_2 \partial_n u_1\,d\sigma(z) = 0,
\end{equation}
because $U_2$ was chosen such that $\partial_n U_2 = \partial_n u_1$.
\end{proof}

\newpage
\subsection{Stationary phase method}
\label{approx}
The purpose of this part is to show that the stationary phase method works in this setting. In the previous section we proved the orthogonality relation $\int(q_1 - q_2)u_1 u_2 = 0$. In section \ref{exist} we will show the existence of solutions of the form $u_1 = e^{in(z-z_0)^2}(1+r)$, $u_2 = e^{in(\wbar{z}-\wbar{z_0})^2}(1+s)$ with the remainder terms $r$ and $s$ depending on $z_0$ and $n$. Thus in this section we need to study oscillatory integral operators of the form $f \mapsto \int e^{inR(z,z_0)}f(z)g_n(z,z_0) dm(z)$, with $R = (z-z_0)^2 + (\wbar{z}-\wbar{z_0})^2$.

The main idea is to note that $e^{inR}$ is a Gaussian kernel. The Fourier transform of a Gaussian is also a Gaussian and the convolution operator transforms into multiplication. The $L^2$ theory of Fourier multipliers is trivial because of Parseval's theorem. Thus Fourier analysis describes completely the behaviour of the main term $\int e^{inR}(q_1-q_2)dm$.

Problems arise from the remainder term. The remaining part of the oscillating solution decays like $n^{-\beta}$, $\beta \in ]0,1[$. Unfortunately the main part has to be multiplied by $n$ to give information about the potentials. This prevents the use of the triangle inequality naively because the remaining integral would grow to infinity. Also, the remainder depends on $z_0$ and $n$ so it could cancel out oscillations of the kernel $e^{inR}$. To solve these issues we need to assume some smoothness for the potentials. Then we may integrate by parts to get rid of the factor $n$.

Handling the remainder term is the only place where the potentials need to have some smoothness. If one would like to solve the inverse problem more generally using methods based on Bukhgeim's proof, one only needs to prove a better estimate instead of theorem \ref{OPremainder}.
\medskip

In the first part of this section we prove that the Fourier transform of a two-dimensional complex Gaussian kernel is also a complex Gaussian kernel. To do this we need the Fourier transform of a one-dimensional Gaussian and the Cauchy integral theorem. We assume that the reader knows some basic facts about analytic functions and Fourier transforms of tempered distributions. We choose the following form for the transform:

\begin{definition}
The Fourier transform and its inverse $\F, \F^{-1}$, mapping $L^2(\R^d)$ to itself, are defined by the formulae
\begin{equation}
\begin{split}
\F f (\xi) &= \widehat{f}(\xi) = \frac{1}{(2\pi)^{d/2}} \int_{\R^d} e^{-i(\xi_1 z_1 + \cdots +\xi_d z_d)} f(z) dm(z),\\
\F^{-1}g (z) &= \check{g}(z) = \frac{1}{(2\pi)^{d/2}} \int_{\R^d} e^{i(z_1 \xi_1 + \cdots + z_d \xi_d)} g(\xi) dm(\xi),
\end{split}
\end{equation}
where $d$ is the dimension of the space. These are well defined. For proofs and other properties we will use see \cite[Section 4.3.1. thms 1, 2]{evans}.
\end{definition}

\begin{lemma}
\label{gaussianFourier}
If $c > 0$ then the Fourier transform of $t \mapsto e^{-c t^2/2}$ is the mapping $\xi \mapsto \frac{1}{\sqrt{c}} e^{-\xi^2/(2c)}$.
\end{lemma}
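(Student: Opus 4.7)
The plan is to reduce the claim to the classical real Gaussian integral $\int_\R e^{-cs^2/2}\,ds = \sqrt{2\pi/c}$. The route has two ingredients: completing the square in the exponent, and a contour-shift argument via Cauchy's theorem to strip off the resulting imaginary translation.

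First I would rewrite the exponent as
\[ -i\xi t - \tfrac{c}{2}t^2 = -\tfrac{c}{2}\bigl(t + \tfrac{i\xi}{c}\bigr)^2 - \tfrac{\xi^2}{2c}, \]
which pulls the Gaussian factor $e^{-\xi^2/(2c)}$ out of the integral defining $\what{f}(\xi)$, leaving
\[ \what{f}(\xi) = \frac{e^{-\xi^2/(2c)}}{\sqrt{2\pi}}\int_\R e^{-\frac{c}{2}(t+i\xi/c)^2}\,dt. \]
The substitution $w = t + i\xi/c$ reinterprets this as an integral of the entire function $w\mapsto e^{-cw^2/2}$ over the horizontal line $\R + i\xi/c$.

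Next I would apply Cauchy's theorem to the entire function $e^{-cw^2/2}$ on the rectangular contour with vertices $\pm T$ and $\pm T + i\xi/c$ (for $\xi \geq 0$; the case $\xi < 0$ is symmetric and $\xi = 0$ is trivial). The decomposition $|e^{-cw^2/2}| = e^{-c(\Re w)^2/2} e^{c(\Im w)^2/2}$ bounds the integrand on each vertical side by $e^{-cT^2/2} e^{\xi^2/(2c)}$, so those contributions vanish as $T\to\infty$. Hence the integrals over $\R$ and over $\R + i\xi/c$ coincide, and the classical one-dimensional real Gaussian integral (proved in the usual way by squaring and switching to polar coordinates) finishes the computation, giving $\what{f}(\xi) = c^{-1/2} e^{-\xi^2/(2c)}$.

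The only place requiring care is this contour-shift step: one cannot treat $t \mapsto t - i\xi/c$ as a real change of variables, since the integrand is genuinely complex-valued, so the uniform exponential decay on the vertical sides of the rectangle is what legitimately allows moving the contour back to the real axis. Everything else is a symbolic rearrangement.
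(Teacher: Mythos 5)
Your proposal is correct and follows essentially the same path as the paper: complete the square in the exponent, pull out the factor $e^{-\xi^2/(2c)}$, and use Cauchy's theorem with vanishing vertical-side estimates to shift the line of integration back to $\R$, reducing to the classical real Gaussian integral. The only cosmetic difference is that the paper first rescales the variable so that the entire function under consideration is $e^{-z^2}$ rather than $e^{-cw^2/2}$.
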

\begin{proof}
This is a direct calculation using Cauchy's integral theorem. Let $c>0$ and $\xi \in \R$. Then
\begin{equation}
\begin{split}
\big( e^{-c \frac{t^2}{2}} \big)^{\wedge} (\xi) &= \frac{1}{\sqrt{2\pi}} \int_{-\infty}^\infty \!\!\! e^{-c \frac{t^2}{2} - i \xi t} dt = \frac{1}{\sqrt{2\pi}} e^{-\frac{\xi^2}{2c}} \int_{-\infty}^\infty \!\!\! e^{-\big( \sqrt{\frac{c}{2}} t + \frac{i \xi}{2\sqrt{c/2}} \big)^2} \! dt \\
&= \frac{1}{\pi c} e^{-\frac{\xi^2}{2c}} \int_{-\infty}^\infty \!\!\! e^{-\big( s + \frac{i \xi}{2\sqrt{c/2}} \big)^2} \! ds = \frac{1}{\sqrt{\pi c}} e^{-\frac{\xi^2}{2c}} \int_{-\infty}^\infty e^{-u^2} du \\
&= \frac{1}{\sqrt{c}} e^{-\frac{\xi^2}{2c}}
\end{split}
\end{equation}
by Cauchy's integral theorem. This is justified because the function given by $z \mapsto e^{-z^2}$ is analytic and for any $A \in \R$ we have
\begin{equation}
\Big\lvert \int_s^{s+i A} \!\!\! e^{-z^2} dz \Big\rvert \leq \int_s^{s+iA} \!\!\! \lvert e^{-z^2} \rvert d\sigma(z) = \int_s^{s+iA} \!\!\! e^{A^2 - s^2} d\sigma(z) = \lvert A \rvert e^{A^2 - s^2},
\end{equation}
which tends to zero when $s \longrightarrow \infty$ or $s \longrightarrow -\infty$ along the real line.
\end{proof}

\begin{lemma}
\label{convKernTransf}
For each nonzero $n\in\R$ the function $\kappa_n:\C \to \C$, given by $\kappa_n(z) = \frac{2n}{\pi} e^{i n (z^2 + \wbar{z}^2)}$ is a tempered distribution and
\begin{equation}
	\widehat{\kappa_n}(\xi) = \frac{\sgn(n)}{2\pi}e^{-i\frac{\xi^2 + \wbar{\xi}^2}{16n}}.
\end{equation}
\end{lemma}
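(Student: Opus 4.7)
The plan is to reduce the two-dimensional computation to the one-dimensional lemma \ref{gaussianFourier}, applied with an imaginary value of $c$, using a real-Gaussian regularisation and a limit in $\mathscr{S}'$.

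First I note that $\kappa_n$ is bounded, hence lies in $\mathscr{S}'(\R^2)$, so its Fourier transform is well-defined. Writing $z=x_1+ix_2$ gives $z^2+\wbar z^2 = 2(x_1^2-x_2^2)$, so $\kappa_n$ factors as a product of one-dimensional Gaussians in $x_1$ and $x_2$ with purely imaginary exponents:
\begin{equation}
\kappa_n(z) = \tfrac{2n}{\pi}\,e^{2in x_1^2}\,e^{-2in x_2^2}.
\end{equation}
Since the Fourier transform normalisation of the excerpt makes the 2D transform of a separable function equal to the product of 1D transforms, the task reduces to Fourier-transforming $e^{\pm 2in t^2}$.

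For each $\epsilon>0$ I would introduce the regularisation $\kappa_n^\epsilon(z)=\kappa_n(z)e^{-\epsilon|z|^2}$, which is a product of two one-dimensional functions $e^{-c_\pm t^2/2}$ with $c_+ = 2\epsilon-4in$ and $c_- = 2\epsilon+4in$, both having positive real part. Lemma \ref{gaussianFourier} is stated for real $c>0$, but both sides of the identity $\F(e^{-ct^2/2})(\xi)=\frac{1}{\sqrt c}e^{-\xi^2/(2c)}$ are holomorphic in $c$ on the half-plane $\{\Re c>0\}$ (taking the principal branch of the square root), so they agree there by analytic continuation. Applying this with $c=c_\pm$ gives
\begin{equation}
\what{\kappa_n^\epsilon}(\xi) = \frac{2n}{\pi}\,\frac{1}{\sqrt{(2\epsilon-4in)(2\epsilon+4in)}}\,\exp\Bigl(-\frac{\xi_1^2}{4\epsilon-8in}-\frac{\xi_2^2}{4\epsilon+8in}\Bigr).
\end{equation}
Because $c_+$ and $c_-$ are complex conjugates in the right half-plane their arguments cancel, so the two principal square roots multiply to $\sqrt{4\epsilon^2+16n^2}\in\R_+$, avoiding any branch ambiguity.

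To finish I take $\epsilon\to 0$. Dominated convergence shows $\kappa_n^\epsilon\to \kappa_n$ in $\mathscr{S}'$, hence $\what{\kappa_n^\epsilon}\to \what{\kappa_n}$ in $\mathscr{S}'$. On the other hand, the explicit formula above converges pointwise and boundedly (uniformly dominated by $\frac{|n|}{\pi\sqrt{4\epsilon^2+16n^2}}\le \frac{1}{2\pi}$) to $\frac{\sgn(n)}{2\pi}\exp\bigl(-i(\xi_1^2-\xi_2^2)/(8n)\bigr)$, which equals $\frac{\sgn(n)}{2\pi}e^{-i(\xi^2+\wbar\xi^2)/(16n)}$ since $\xi^2+\wbar\xi^2=2(\xi_1^2-\xi_2^2)$; dominated convergence again promotes this to convergence in $\mathscr{S}'$. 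The two $\mathscr{S}'$ limits must agree, yielding the claimed identity. The only genuinely delicate point is the branch choice and the identity $\sqrt{c_+}\sqrt{c_-}=\sqrt{c_+c_-}$, which is justified exactly because the regularisation keeps both factors in the open right half-plane for every $\epsilon>0$.
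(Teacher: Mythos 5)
Your proof is correct and takes essentially the same route as the paper: both reduce to the one-dimensional Gaussian transform via analytic continuation into the right half-plane, both regularise by a factor $e^{-\epsilon|z|^2}$ (the paper applies it inside the duality pairing, you apply it to $\kappa_n$ directly and pass to $\mathscr{S}'$ limits), and both finish with the same cancellation of the conjugate square roots. The only difference is the packaging of the distributional limit, not the underlying argument.
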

\begin{proof}
The function $\kappa_n$ is bounded and measurable so it is a tempered distribution. Let $\varphi \in \mathscr{S} (\R)$ be a Schwartz test function. Note that by the previous lemma (\ref{gaussianFourier}) if $c>0$ then
\begin{equation}
\int_\R e^{-c \frac{t^2}{2}} \what{\varphi}(t) dt = \int_\R \frac{1}{\sqrt{c}} e^{-\frac{\xi^2}{2c}} \varphi(\xi) d\xi.
\end{equation}
Let us choose a branch of the square root in the complex plane such that $\operatorname{arg} \sqrt{z} \in \left] -\frac{\pi}{2}, \frac{\pi}{2} \right]$. Now both sides of the previous equation are analytic functions of $c$ in the right half-plane $\Re (c) > 0$. Thus they are also equal in the right half-plane. 

Let $\phi, \psi \in \mathscr{S}(\R)$ be two Schwartz test functions. By Fubini's theorem and dominated convergence
\begin{equation}
\begin{split}
\int_{\C} & e^{in(z^2 + \wbar{z}^2)} \big(\phi(\xi_1)\psi(\xi_2)\big)^\wedge(z) dm(z) = \int_{\R^2} e^{i2n(x^2 - y^2)} \what{\phi}(x)\what{\psi}(y) dm(x,y) \\
&= \int_{-\infty}^\infty e^{i 2n x^2} \what{\phi}(x) dx \int_{-\infty}^\infty e^{-i 2n y^2} \what{\psi}(y) dy \\
&= \lim_{\epsilon \to 0+} \int_{-\infty}^\infty e^{-(2\epsilon - i4n) \frac{x^2}{2}} \what{\phi}(x) dx \int_{-\infty}^\infty e^{-(2\epsilon + i4n) \frac{y^2}{2}} \what{\psi}(y) dy \\
&= \lim_{\epsilon\to 0+} \frac{1}{\sqrt{2\epsilon - i 4n}}\int_{-\infty}^\infty \!\!\!\!\!\! e^{-\frac{\xi_1^2}{4\epsilon - i8n}} \phi(\xi_1) d\xi_1 \frac{1}{\sqrt{2\epsilon + i 4n}} \int_{-\infty}^\infty \!\!\!\!\!\! e^{-\frac{\xi_2^2}{4\epsilon + i8n}} \psi(\xi_2) d\xi_2 \\
&= \frac{1}{\sqrt{\lvert 4n \rvert}}e^{i\frac{\pi}{4} \sgn(n)} \frac{1}{\sqrt{\lvert 4n \rvert}} e^{-i\frac{\pi}{4} \sgn(n)} \int_{-\infty}^\infty e^{\frac{\xi_1^2 - \xi_2^2}{i8n}}\phi(\xi_1)\psi(\xi_2) dm(\xi_1,\xi_2) \\
&= \frac{1}{4|n|} \int_\C e^{-i \frac{\xi^2 + \wbar{\xi}^2}{16n}} \phi(\xi_1) \psi(\xi_2) dm(\xi),
\end{split}
\end{equation}
so $\mathscr{F}\big\{ \frac{2n}{\pi} e^{in(z^2 + \wbar{z}^2)} \big\}(\xi) = \frac{\sgn(n)}{2\pi} e^{-i\frac{\xi^2 + \wbar{\xi}^2}{16n}}$.
\end{proof}
\medskip

Now we have all the ingredients to study the oscillatory integral operator $f \mapsto \int n e^{inR} f dm$. When applied to $(q_1 - q_2) (1+r)(1+s)$ we must use different methods to the principal term $(q_1 - q_2)$ and the remainder terms $(q_1 - q_2)(r+s+rs)$ because the latter depends on both variables of the kernel $e^{inR}$.

\begin{theorem}
\label{OPdefThm}
Let $n>0$. Then the operator defined by
\begin{equation}
T_n f (z_0) = \int_\Omega \frac{2n}{\pi} e^{in\big((z-z_0)^2 + (\wbar{z}-\wbar{z_0})^2\big)} f(z) \,dm(z)
\end{equation}
maps $L^2(\Omega)$ to $L^2(\C)$ isometrically and its Fourier transform is given by $\F\{T_n f\}(\xi) = e^{-i\frac{\xi^2 + \wbar{\xi}^2}{16n}}\widehat{f}(\xi)$, where $f$ is extended as zero outside of $\Omega$.
\end{theorem}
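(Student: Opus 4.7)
The plan is to recognize $T_n$ as a convolution operator against the kernel $\kappa_n$ of Lemma \ref{convKernTransf}, then apply the convolution theorem together with Plancherel to get both the multiplier formula and the isometry property at once.

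First I would observe that if we extend $f$ by zero outside $\Omega$, then
\begin{equation*}
T_n f(z_0) = \int_{\C} \tfrac{2n}{\pi} e^{in\big((z_0-z)^2 + (\wbar{z_0}-\wbar{z})^2\big)} f(z)\,dm(z) = (\kappa_n * f)(z_0),
\end{equation*}
since $(z_0-z)^2 = (z-z_0)^2$, and analogously for the conjugate. Thus $T_n$ is convolution with the bounded, tempered distribution $\kappa_n$, restricted to inputs supported in $\wbar{\Omega}$.

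Next, for a Schwartz test function $\varphi \in \mathscr{S}(\R^2)$ the convolution $\kappa_n * \varphi$ is a smooth tempered distribution, and the convolution theorem in the chosen normalization reads $\F(\kappa_n * \varphi) = 2\pi\, \what{\kappa_n}\, \what{\varphi}$. Lemma \ref{convKernTransf} together with $n>0$ (so $\sgn(n) = 1$) then gives
\begin{equation*}
\F(T_n \varphi)(\xi) = 2\pi \cdot \tfrac{1}{2\pi} e^{-i\frac{\xi^2 + \wbar{\xi}^2}{16n}} \what{\varphi}(\xi) = m(\xi)\what{\varphi}(\xi),\qquad m(\xi):=e^{-i\frac{\xi^2 + \wbar{\xi}^2}{16n}}.
\end{equation*}
The key observation for isometry is that writing $\xi = \xi_1 + i\xi_2$ one has $\xi^2 + \wbar{\xi}^2 = 2(\xi_1^2 - \xi_2^2) \in \R$, so $|m(\xi)| \equiv 1$. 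Plancherel therefore yields $\lVert T_n \varphi \rVert_{L^2(\C)} = \lVert m\what{\varphi}\rVert_{L^2(\C)} = \lVert \what\varphi\rVert_{L^2(\C)} = \lVert \varphi \rVert_{L^2(\C)}$ for every Schwartz $\varphi$.

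Finally I would extend to $f \in L^2(\Omega)$ by density. Choose $\varphi_k \in C_c^\infty(\Omega) \subset \mathscr{S}(\R^2)$ with $\varphi_k \to f$ in $L^2(\Omega)$. By the Schwartz-case isometry, $\{T_n \varphi_k\}$ is Cauchy in $L^2(\C)$ and its limit $g \in L^2(\C)$ satisfies $\what{g} = m\what{f}$ as well as $\lVert g \rVert_{L^2(\C)} = \lVert f \rVert_{L^2(\Omega)}$. On the other hand, for each fixed $z_0$, Cauchy--Schwarz on the bounded set $\Omega$ shows that the integral defining $T_n f(z_0)$ is absolutely convergent and that $T_n \varphi_k(z_0) \to T_n f(z_0)$ pointwise. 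Passing to a subsequence converging almost everywhere, we identify $g = T_n f$ a.e., which completes the proof.

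The main technical obstacle is the bookkeeping required to reconcile the pointwise definition of $T_n f$ with the Fourier-multiplier definition via $m\what{f}$, since $\kappa_n \notin L^1(\C)$ and the convolution theorem must be invoked in the tempered-distribution sense; once the computation is carried out for Schwartz inputs, the density step is routine.
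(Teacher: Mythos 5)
Your proposal is correct and follows essentially the same route as the paper: identify $T_n f$ as the convolution $\kappa_n * f$, use Lemma~\ref{convKernTransf} and the convolution theorem to get the multiplier $e^{-i(\xi^2+\wbar{\xi}^2)/16n}$, note that $\xi^2+\wbar{\xi}^2\in\R$ makes the multiplier unimodular, and conclude by Parseval. The only difference is cosmetic: the paper applies the tempered-distribution convolution theorem directly (since $f\in L^2(\Omega)$ is compactly supported, hence $L^1$), whereas you pass through Schwartz functions and a density argument to reconcile the pointwise and distributional definitions --- a slightly more detailed presentation of the same reasoning.
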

\begin{proof}
We expand $f$ to $\C$ by choosing $f = 0$ in $\C\setminus\Omega$. Now we can interpret $T_n f$ as the convolution $\kappa_n \ast f$. Because $f$ is compactly supported the convolution gives a tempered distribution whose Fourier transform is $\F\{T_n f\}(\xi) = 2\pi \widehat{\kappa_n} \widehat{f}(\xi) = e^{-i\frac{\xi^2 + \wbar{\xi}^2}{16n}} \widehat{f}(\xi)$ by lemma \ref{convKernTransf}.

By Parseval's theorem and the fact that $\xi^2 + \wbar{\xi}^2 \in \R$ we get
\begin{equation}
\begin{split}
\big\lVert T_n f \big\rVert_{L^2(\C)} &= \big\lVert \F\{T_n f \} \big\rVert_{L^2(\C)} = \big\lVert e^{-i\frac{\xi^2 + \wbar{\xi}^2}{16n}} \widehat{f}(\xi) \big\rVert_{L^2(\C)} = \big\lVert \widehat{f}(\xi) \big\rVert_{L^2(\C)} \\
&= \lVert f \rVert_{L^2(\C)} = \lVert f \rVert_{L^2(\Omega)},
\end{split}
\end{equation}
because $\supp f \subset \wbar{\Omega}$.
\end{proof}

The next theorem shows that the stationary phase method works for $L^2$ potentials. The argumentation is similar to the one in the previous theorem. These two theorems describe completely the behaviour of the principal term of the oscillatory integral.

\begin{theorem}
\label{OPthm}
Let $f \in L^p(\Omega)$, $p \geq 2$. Then 
\begin{equation}
T_n f (z_0) = \int_{\Omega} \frac{2n}{\pi} e^{i n \big((z-z_0)^2 + (\wbar{z}-\wbar{z_0})^2\big)} f(z)\,dm(z) \longrightarrow f(z_0)
\end{equation}
in $L^2(\Omega)$ when $n\longrightarrow \infty$.
\end{theorem}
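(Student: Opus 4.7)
The plan is to reduce the claim to a Plancherel-plus-dominated-convergence computation using the Fourier-side formula established in Theorem \ref{OPdefThm}. Since $\Omega$ is bounded, $L^p(\Omega) \subset L^2(\Omega)$ for $p \geq 2$, so I may assume $f \in L^2(\Omega)$. Extend $f$ by zero outside $\Omega$; it suffices to show $\lVert T_n f - f \rVert_{L^2(\C)} \to 0$, since the restriction to $\Omega$ is bounded by the norm on $\C$.

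First I would apply Theorem \ref{OPdefThm} to write
\begin{equation}
\F\{T_n f - f\}(\xi) = \bigl(e^{-i(\xi^2 + \wbar{\xi}^2)/(16n)} - 1\bigr) \widehat{f}(\xi),
\end{equation}
noting that $\xi^2 + \wbar{\xi}^2 = 2(\xi_1^2 - \xi_2^2) \in \R$ so the exponential factor is a unit modulus complex number. Then Parseval's theorem gives
\begin{equation}
\lVert T_n f - f \rVert_{L^2(\C)}^2 = \int_\C \bigl\lvert e^{-i(\xi^2+\wbar{\xi}^2)/(16n)} - 1 \bigr\rvert^2 \lvert \widehat{f}(\xi) \rvert^2 \,dm(\xi).
\end{equation}

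The integrand tends to zero pointwise as $n \to \infty$, since $(\xi^2 + \wbar{\xi}^2)/(16n) \to 0$ for each fixed $\xi$. It is dominated uniformly in $n$ by $4\lvert \widehat{f}(\xi) \rvert^2$, which is integrable because $\widehat{f} \in L^2(\C)$ by Parseval and the assumption $f \in L^2$. The dominated convergence theorem then concludes the proof.

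There is essentially no obstacle here: once the Fourier-side identity of the previous theorem is available, the argument is a textbook dominated convergence. The only subtle point is remembering that $T_n f$ is \emph{a priori} defined on all of $\C$ (not just $\Omega$), so one compares $T_n f$ with the zero extension of $f$ on $\C$ and then restricts, using that $\lVert \cdot \rVert_{L^2(\Omega)} \leq \lVert \cdot \rVert_{L^2(\C)}$.
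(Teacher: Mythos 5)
Your proof is correct and is essentially identical to the paper's: both pass to the Fourier side via Theorem \ref{OPdefThm}, use Parseval to turn the $L^2$ norm into $\int_\C \lvert 1 - e^{-i(\xi^2+\wbar{\xi}^2)/(16n)}\rvert^2 \lvert\widehat{f}\rvert^2\,dm$, and conclude by dominated convergence with dominating function $4\lvert\widehat{f}\rvert^2$. No issues.
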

\begin{proof}
Because $\Omega$ is bounded $f$ is in $L^2(\Omega)$. The previous theorem gives $T_n f_{|\Omega} \in L^2(\Omega)$. By continuing $f$ as zero outside $\Omega$ we may interpret $T_nf$ as the convolution $\kappa_n \ast f$. By Parseval's theorem and dominated convergence
\begin{equation}
\begin{split}
	\big\lVert f - &T_n f \big\rVert_{L^2(\Omega)}^2 = \big\lVert f - \kappa_n \ast f \big\rVert_{L^2(\Omega)}^2 \leq \big\lVert f - \kappa_n \ast f \big\rVert_{L^2(\C)}^2 \\
	&= \big\lVert \widehat{f} - e^{-i\frac{\xi^2 + \wbar{\xi}^2}{16n}} \widehat{f} \big\rVert_{L^2(\C)}^2 = \int_\C \big| 1- e^{-i\frac{\xi^2 + \wbar{\xi}^2}{16n}} \big|^2 |\widehat{f}(\xi)|^2 \,dm(\xi) \longrightarrow 0
\end{split}
\end{equation}
when $n \longrightarrow \infty$.
\end{proof}

\medskip

The last theorem of this section is the most technical. In it we prove that the remaining terms of the oscillatory integral cause no problems. Note that this theorem is the only place in this thesis where we must assume some smoothness on the potentials $q_j$ of the Schr\"odinger equations $\Delta u + q_j u = 0$. The reason to require smoothness is to be able to integrate by parts to get rid of the growing factor $n$.

The proof contains some of the ideas of Bukhgeim's lemma 3.4, which is the most difficult one in his paper \cite{bukhgeim}. The rest is in lemma \ref{restNorms} where we show that the Sobolev norms of the remainder terms tend to zero.

\begin{theorem}
\label{OPremainder}
Let $2<p\leq\infty$, $q \in W^{1,p}_\oplus(\Omega)$ and let $\{g_{z_0} \in W^{1,p}(\Omega) \mid z_0 \in \Omega\}$ be a family of functions measurable in $(z,z_0)$. Then
\begin{multline}
\Big\lVert \int_\Omega \frac{2n}{\pi} e^{in\big((z-z_0)^2 + (\wbar{z}-\wbar{z_0})^2\big)} q(z) g_{z_0}(z) dm(z) \Big\rVert_{L^2(\Omega,z_0)} \\
\leq P \lVert q \rVert_{W^{1,p}_\oplus(\Omega)} \esssup_{z_0\in\Omega} \lVert g_{z_0} \rVert_{W^{1,p}(\Omega)},
\end{multline}
where $P$ is a polynomial function of the lengths of the boundaries $\partial\Omega_j$ and the Sobolev embedding\footnote{See \cite[thms 1.4.3.1, 1.4.4.1]{grisvard}.} constants in $\lVert f \rVert_{C^{1-2/p}(\wbar{\Omega_j})} \leq E_{p,j} \lVert f \rVert_{W^{1,p}(\Omega_j)}$. Here we denoted by $\Omega_j$ the Lipschitz pieces where $q$ is $W^{1,p}$ smooth.
\end{theorem}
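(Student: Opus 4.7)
The plan is to cancel the growing factor $n$ by integration by parts via the identity $\wbar\partial_z e^{inR}=2in(\wbar z-\wbar z_0)e^{inR}$, after first isolating the pointwise value of the amplitude at $z_0$ to create a factor that vanishes there. Concretely, I would decompose
\[
q(z)g_{z_0}(z)=q(z)g_{z_0}(z_0)+q(z)\widetilde g(z),\qquad \widetilde g(z):=g_{z_0}(z)-g_{z_0}(z_0),
\]
and estimate the two pieces separately. The first piece produces $g_{z_0}(z_0)(T_n q)(z_0)$, whose $L^2(\Omega,z_0)$-norm is controlled by $\esssup_{z_0}|g_{z_0}(z_0)|\cdot\lVert T_n q\rVert_{L^2(\Omega)}$; by theorem \ref{OPdefThm} (isometry of $T_n$) and the Sobolev embedding $W^{1,p}(\Omega)\hookrightarrow C^{1-2/p}(\wbar\Omega)$ (which requires $p>2$), this is bounded by $C\esssup_{z_0}\lVert g_{z_0}\rVert_{W^{1,p}}\cdot\lVert q\rVert_{W^{1,p}_\oplus}$, with no $n$-dependence.

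For the remainder $J_2(z_0):=\int_\Omega \tfrac{2n}{\pi}e^{inR}q\widetilde g\,dm$ I would write $\tfrac{2n}{\pi}e^{inR}=\wbar\partial_z e^{inR}/(i\pi(\wbar z-\wbar z_0))$ and integrate by parts in $z$ on each Lipschitz piece $\Omega_j$ separately (necessary because $q$ may jump across interfaces, whereas $\widetilde g\in W^{1,p}(\Omega)$ is globally smooth -- this is the only place we use that $g_{z_0}$ is genuinely in $W^{1,p}(\Omega)$ and not just $W^{1,p}_\oplus$). The singularity at $z_0\in\Omega_{j_0}$ is handled by excising a disc $B_\epsilon(z_0)$: the H\"older bound $|\widetilde g(z)|\leq C|z-z_0|^{1-2/p}\lVert g_{z_0}\rVert_{W^{1,p}}$ makes the integrand on $\partial B_\epsilon$ of size $O(\epsilon^{-2/p})$, so the circle contribution is $O(\epsilon^{1-2/p})\to 0$. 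What remains, with $n$ gone, is an interior term $-\sum_j\int_{\Omega_j} e^{inR}\wbar\partial[q\widetilde g/(i\pi(\wbar z-\wbar z_0))]\,dm$ plus boundary contributions on $\partial\Omega$ and on the interfaces between pieces, where a jump $[q]:=q|_{\Omega_j}-q|_{\Omega_k}$ survives.

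Each of the three resulting interior integrand terms -- $\wbar\partial q\cdot\widetilde g/(\wbar z-\wbar z_0)$, $q\wbar\partial\widetilde g/(\wbar z-\wbar z_0)$ and $-q\widetilde g/(\wbar z-\wbar z_0)^2$ -- is then bounded pointwise in $z_0$ by $C\lVert q\rVert_{W^{1,p}_\oplus}\lVert g_{z_0}\rVert_{W^{1,p}}$, using $p>2$ essentially: the first two via H\"older's inequality and the local $L^{p'}$-integrability of $1/|z-z_0|$ (which requires $p'<2$); the third, after applying the H\"older bound on $\widetilde g$, reduces to $\int|z-z_0|^{-1-2/p}\,dm$, finite exactly when $p>2$. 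For the boundary pieces (both on $\partial\Omega$ and on each interface) one bounds the integrand by $C\lVert q\rVert\lVert g_{z_0}\rVert|z-z_0|^{-2/p}$ and takes the $L^2(\Omega,z_0)$-norm by Minkowski's integral inequality, reducing to $\int_\Omega|z-z_0|^{-4/p}\,dm$ for each point on the curve -- again finite exactly when $p>2$ -- and contributing a factor proportional to the length of the curve.

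The main obstacle is not conceptually deep but rather careful bookkeeping: at each interface $\Gamma$ between pieces $\Omega_j$ and $\Omega_k$, summing the piecewise integrations by parts leaves a residual jump integral $\int_\Gamma\bar\nu[q]\widetilde g/(i\pi(\wbar z-\wbar z_0))\,d\sigma$ that has to be controlled via the embedding on each side of $\Gamma$; simultaneously the excising argument at $z_0$ must be carried out uniformly to justify the integration by parts. The polynomial $P$ in the statement then encodes precisely the total boundary length $\sum_j|\partial\Omega_j|$ and the embedding constants $E_{p,j}$ that assemble from all of these estimates.
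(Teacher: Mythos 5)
Your proposal matches the paper's proof essentially step for step: same decomposition $qg_{z_0}=q\,g_{z_0}(z_0)+q(g_{z_0}-g_{z_0}(z_0))$, same use of Theorem \ref{OPdefThm} plus Sobolev embedding for the first piece, same per-piece integration by parts to kill the factor $n$, and the same H\"older/$L^{p'}$-integrability and Minkowski-integral-inequality estimates on the resulting interior and boundary terms, all hinging on $p>2$. The only differences are cosmetic: you integrate by parts in $\wbar z$ (weight $(\wbar z-\wbar z_0)^{-1}$) where the paper uses $z$ (weight $(z-z_0)^{-1}$), and you justify the integration by parts near the singularity by excising $B_\epsilon(z_0)$ and letting $\epsilon\to0$, whereas the paper instead shows directly that $q_j\cdot(g_{z_0}-g_{z_0}(z_0))/(z-z_0)\in W^{1,r}(\Omega_j)$ for some $r>1$ and invokes Theorem \ref{soboIntByParts}; and the paper simply sums the absolute values of each piece's boundary integral rather than tracking the interface jumps $[q]$.
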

\begin{proof}
Because $\Omega$ is bounded we may assume that $p<\infty$. Denote $R = (z-z_0)^2 + (\wbar{z}-\wbar{z_0})^2$ and $\alpha = 1 - \frac{2}{p} \in \left]0,1\right[$. Take a finite number of pairwise disjoint open sets $\Omega_j \subset \Omega$ with Lipschitz boundary such that $q_j = q_{|\Omega_j} \in W^{1,p}(\Omega_j)$ and $\wbar{\Omega} = \cup \wbar{\Omega_j}$. First we get
\begin{equation}
\begin{split}
\int_\Omega &\frac{2n}{\pi} e^{inR} q(z) g_{z_0}(z) dm(z) = \int_\Omega \frac{2n}{\pi} e^{inR} q(z)\big(g_{z_0}(z) - g_{z_0}(z_0)\big) dm(z) \\
&+ g_{z_0}(z_0) \int_\Omega \frac{2n}{\pi} e^{inR} q(z) dm(z).
\end{split}
\end{equation}
The second term is easy. By theorem \ref{OPdefThm} we get
\begin{equation}
\begin{split}
\Big\lVert &g_{z_0}(z_0) \int_\Omega \frac{2n}{\pi} e^{inR} q(z) dm(z) \Big\rVert_2 \leq \esssup_{z_0\in\Omega} \lVert g_{z_0} \rVert_\infty \Big \lVert \int_\Omega \frac{2n}{\pi} e^{inR} q dm \Big\rVert_2 \\
&\leq \esssup_{z_0\in\Omega} \lVert g_{z_0} \rVert_\alpha \lVert q \rVert_2 \leq \pi^{\frac{1}{2}-\frac{1}{p}} \lVert q \rVert_p \esssup_{z_0\in\Omega} \lVert g_{z_0} \rVert_\alpha
\end{split}
\end{equation}
We wish to integrate the first term by parts. Note that $2ine^{inR} = (z-z_0)^{-1}\partial e^{inR}$ and $e^{inR} \in W^{1,\infty}(\Omega)$. To be able to integrate by parts (lemma \ref{soboIntByParts}) we need to show that $q_j(z) \frac{g_{z_0}(z) - g_{z_0}(z_0)}{z-z_0}$ is in $W^{1,1+\epsilon}(\Omega_j)$ for some $\epsilon >0$ given $z_0$. Denote $F = \frac{g_{z_0}(z) - g_{z_0}(z_0)}{z-z_0}$ and note that $g_{z_0} \in W^{1,p}(\Omega) \subset C^\alpha(\wbar{\Omega})$ by Sobolev embedding. Now in the distributional sense
\begin{equation}
\begin{split}
F &= \frac{g_{z_0}(z) - g_{z_0}(z_0)}{z-z_0} \in L^a(\Omega) \text{ for $1\leq a < \frac{2}{1-\alpha} = p$}, \\
\partial F &= \frac{\partial g_{z_0}(z)}{z-z_0} - \frac{g_{z_0}(z) - g_{z_0}(z_0)}{(z-z_0)^2} \in L^a(\Omega) \text{ for $1\leq a < \frac{2}{2-\alpha} = \frac{2p}{p+2}$}, \\
\wbar{\partial} F &= \frac{\wbar{\partial} g_{z_0}(z)}{z-z_0} \in L^a(\Omega) \text{ for $1 \leq a < \big( \frac{1}{p} + \frac{1}{2}\big)^{-1} = \frac{2p}{p+2}$},
\end{split}
\end{equation}
so $F \in L^{a_1}(\Omega)$ for $1 \leq a_1 < p$, $\partial_k F \in L^{a_2}(\Omega)$ for $1\leq a_2 < \frac{2p}{2+p}$ and $q_j \in W^{1,p}(\Omega_j) \subset L^\infty(\Omega_j)$ by Sobolev embedding (see \cite[thms 1.4.3.1, 1.4.4.1]{grisvard}). Thus $q_j F \in L^{a_1}(\Omega_j)$ for $1 \leq a_1 < p$ and $\partial_k(gF) = (\partial_k g) F + g\partial_k F \in L^r(\Omega_j)$ for $r$ such that $\frac{1}{r} \geq \frac{1}{p} + \frac{1}{a_1}$ and $r \leq a_2$. Taking $r < \frac{p}{2}$ satisfies the first condition. Because $p>2$ it is possible to choose $1\leq r < \min(\frac{p}{2}, a_2)$. Thus $q_j F \in W^{1,r}(\Omega_j)$ for some $r>1$ and we may integrate by parts.

%selitys hirviöstä:
In the next estimate we split the area of integration into pieces where $q$ is $W^{1,p}$ smooth, integrate by parts in each piece, and then use Hölder inequalities to get $g_{z_0}$, $q_j$ and $\partial q_j$ out of the integral. The second to last term requires $p>2$: the function $\lvert z - z_0 \rvert^{(\alpha-1)p'}$ is integrable if and only if $p>2$.
\begin{equation}
\begin{split}
\Big\lVert& \int_\Omega \frac{2n}{\pi} e^{inR} q(z) \big( g_{z_0}(z) - g_{z_0}(z_0)\big) dm(z) \Big\lVert_{L^2(\Omega,z_0)} \\
&\leq \sum_{j=1}^m \Big\lVert \int_{\Omega_j} \frac{2n}{\pi} e^{inR} q_j(z) \big( g_{z_0}(z) - g_{z_0}(z_0)\big) dm(z) \Big\lVert_{L^2(\Omega,z_0)} \\
&= \sum_{j=1}^m\Big\lVert \frac{1}{\pi} \int_{\Omega_j} \big(\partial e^{inR}\big) q_j(z) \frac{g_{z_0}(z) - g_{z_0}(z_0)}{i(z-z_0)} dm(z) \Big\lVert_{L^2(\Omega,z_0)} \\
&\leq \sum_{j=1}^m \Big\lVert \frac{1}{\pi i} \int_{\partial\Omega_j} \frac{\wbar{z}}{2} e^{inR} \Tr_{\Omega_j} q_j(z) \frac{g_{z_0}(z) - g_{z_0}(z_0)}{z-z_0} d\sigma(z) \Big\lVert_{L^2(\Omega,z_0)} \\
&\phantom{\leq} + \sum_{j=1}^m \Big\lVert \frac{1}{\pi i} \int_{\Omega_j} e^{inR} q_j(z) \frac{g_{z_0}(z) - g_{z_0}(z_0)}{(z-z_0)^2} dm(z) \Big\lVert_{L^2(\Omega,z_0)} \\
&\phantom{\leq} + \sum_{j=1}^m \Big\lVert \frac{1}{\pi i} \int_{\Omega_j} e^{inR} \partial q_j(z) \frac{g_{z_0}(z) - g_{z_0}(z_0)}{z-z_0} dm(z) \Big\lVert_{L^2(\Omega,z_0)} \\
&\phantom{\leq} + \sum_{j=1}^m \Big\lVert \frac{1}{\pi i} \int_{\Omega_j} e^{inR} q_j(z) \frac{\partial g_{z_0}(z)}{z-z_0} dm(z) \Big\lVert_{L^2(\Omega,z_0)} \\
&\leq \sum_{j=1}^m \esssup_{z_0\in\Omega} \lVert g_{z_0} \rVert_\alpha \lVert q_j \rVert_\infty \Big\lVert \frac{1}{2\pi} \int_{\partial\Omega_j} |z-z_0|^{\alpha-1} d\sigma(z) \Big\lVert_{L^2(\Omega,z_0)} \\
&\phantom{\leq} + \sum_{j=1}^m \esssup_{z_0\in\Omega} \lVert g_{z_0} \rVert_\alpha \lVert q_j \rVert_\infty \Big\lVert \frac{1}{\pi} \int_{\Omega_j} |z-z_0|^{\alpha-2} dm(z) \Big\lVert_{L^2(\Omega,z_0)} \\
&\phantom{\leq} + \sum_{j=1}^m \esssup_{z_0\in\Omega} \lVert g_{z_0} \rVert_\alpha \lVert \partial q_j \rVert_p \Big\lVert \frac{1}{\pi} \Big( \int_{\Omega_j} |z-z_0|^{(\alpha-1)p'} dm(z) \Big)^{1/p'}\Big\lVert_{L^2(\Omega,z_0)} \\
&\phantom{\leq} + \sum_{j=1}^m \esssup_{z_0\in\Omega} \lVert \partial g_{z_0} \rVert_p \lVert q_j \rVert_\infty \Big\lVert \frac{1}{\pi} \Big( \int_{\Omega_j} |z-z_0|^{-p'} dm(z) \Big)^{1/p'} \Big\lVert_{L^2(\Omega,z_0)},
\end{split}
\end{equation}
where $p^{-1} + p'^{-1} = 1$. By lemma \ref{technicalIntegrals} and some simple arithmetic
\begin{equation}
\begin{split}
&\Big\lVert \frac{1}{2\pi} \int_{\partial \Omega_j} |z-z_0|^{\alpha-1} d\sigma(z) \Big\rVert_{L^2(\Omega,z_0)} \leq \frac{1}{2\pi} \sqrt{\frac{\pi}{\alpha}}\sigma(\partial\Omega_j), \\
&\Big\lVert \frac{1}{\pi} \int_{\Omega_j} |z-z_0|^{\alpha-2} dm(z) \Big\rVert_{L^2(\Omega,z_0)} \leq \frac{2\sqrt{\pi}}{\alpha}, \\
&\Big\lVert \frac{1}{\pi} \Big( \int_{\Omega_j} |z-z_0|^{(\alpha-1)p'} dm(z)\Big)^{1/p'} \Big\rVert_{L^2(\Omega,z_0)} \leq \sqrt{\pi}, \\
&\Big\lVert \frac{1}{\pi} \Big( \int_{\Omega_j} |z-z_0|^{-p'} dm(z) \Big)^{1/p'} \Big\rVert_{L^2(\Omega,z_0)} \leq \sqrt{\pi}\big(1+\frac{1}{\alpha}\big).
\end{split}
\end{equation}
To finish we use the inequalities $\lVert q_j \rVert_p, \lVert \partial q_j \rVert_p \leq \lVert q \rVert_{W^{1,p}_\oplus(\Omega)}$ and Sobolev embedding to get $\lVert q_j \rVert_\infty \leq \lVert q_j \rVert_{C^\alpha(\wbar{\Omega_j})} \leq E_{p,j} \lVert q_j \rVert_{W^{1,p}(\Omega_j)} \leq E_{p,j} \lVert q \rVert_{W^{1,p}_\oplus(\Omega)}$ and $\lVert g_{z_0} \rVert_\alpha, \lVert \partial g_{z_0} \rVert_p, \lVert \wbar{\partial} g_{z_0} \rVert_p \leq E_p \lVert g_{z_0} \rVert_{W^{1,p}(\Omega)}$.
\end{proof}

\newpage
\subsection{Existence of suitable solutions}
\label{exist}
Next we will show that the Schr\"odinger equation has special kinds of oscillating solutions. The main point is to find solutions which, when plugged into the orthogonality relation \eqref{ORTrel}, give the convolution kernel $\kappa_n$, which was studied in lemma \ref{convKernTransf} and theorem \ref{OPdefThm}, and some error terms.

To find the special solutions we factor the Laplacian and transform the Schr\"odinger equation into an integral equation. In the plane the Laplacian factors into $\Delta = 4\partial\wbar{\partial}$. This gives a hint that the kernel $K_n \approx u_1 u_2$ of the integral operator $T_n$ should also be factored into a holomorphic and an antiholomorphic\footnote{A function $u$ is holomorphic if $\wbar{\partial} u =0$ and antiholomorphic if $\partial u = 0$.} part. Thus following Bukhgeim's idea we seek solutions of the forms $u_1 \approx e^{in(z-z_0)^2}$ and $u_2 \approx e^{in(\wbar{z}-\wbar{z_0})^2}$.

The integral equation will have the form of a fixed point equation. To prove that it has a solution we will show that for big $n$ the operator is a contraction in the H\"older space $C^\alpha(\wbar{\Omega})$, $\alpha = 1 -\frac{2}{p}$. To show this we will integrate by parts inside a Cauchy operator. Its kernel has a singularity which we must remove using a suitable cut-off function. We start by constructing it and calculating some norm bounds.

\label{solutions}
\begin{lemma}
\label{perusSilea}
Define $\gamma:\R \to \R$ as
\begin{equation}
\gamma(x) = \frac{g(2-x)}{g(2-x) + g(x-1)}, \quad \text{where} \quad g(x) = \left\{ \begin{array}{rl} 0,& \text{when $x \leq 0$}\\ e^{-\frac{1}{x}},& \text{when $x>0$} \end{array} \right. .
\end{equation}
\begin{centering} \mbox{\beginpicture
\setcoordinatesystem units <1.75220cm,1.75220cm>
\put {\phantom{.}} at -3.45154 -0.46052
\put {\phantom{.}} at -3.45154 1.41135
\put {\phantom{.}} at 3.96769 -0.46052
\put {\phantom{.}} at 3.96769 1.41135
\setlinear
\setplotsymbol({.})
\setshadesymbol <z,z,z,z> ({\fiverm .})

\put {$g$} [b] at -3.00000 0.05000
\put {$\gamma$} [b] at -3.00000 1.05000

\putrule from -3.44412 0.00000 to 3.96027 0.00000
\putrule from -3.00000 -0.04875 to -3.00000 0.04875
\put {$-3$} [t] at -3.00000 -0.04875
\putrule from -2.00000 -0.04875 to -2.00000 0.04875
\put {$-2$} [t] at -2.00000 -0.04875
\putrule from -1.00000 -0.04875 to -1.00000 0.04875
\put {$-1$} [t] at -1.00000 -0.04875
\putrule from 0.00000 -0.04875 to 0.00000 0.04875
\put {$0$} [t] at 0.00000 -0.04875
\putrule from 1.00000 -0.04875 to 1.00000 0.04875
\put {$1$} [t] at 1.00000 -0.04875
\putrule from 2.00000 -0.04875 to 2.00000 0.04875
\put {$2$} [t] at 2.00000 -0.04875
\putrule from 3.00000 -0.04875 to 3.00000 0.04875
\put {$3$} [t] at 3.00000 -0.04875

\plot 
-3.45154 0.00000
0.07957 0.00050 %PDFtex ei piirrä suorien viivojen oikeaa puolta kokonaan
0.08282 0.00001
0.09256 0.00002
0.10229 0.00006
0.11203 0.00013
0.12177 0.00027
0.13150 0.00050
0.14124 0.00084
0.15098 0.00133
0.16071 0.00198
0.17045 0.00283
0.18018 0.00389
0.18992 0.00517
0.19966 0.00668
0.20939 0.00843
0.21913 0.01043
0.22887 0.01266
0.23860 0.01513
0.24834 0.01783
0.25808 0.02076
0.26781 0.02390
0.27755 0.02724
0.28729 0.03078
0.29702 0.03450
0.30676 0.03839
0.31650 0.04244
0.32623 0.04664
0.33597 0.05097
0.34571 0.05543
0.35544 0.06000
0.36518 0.06467
0.37492 0.06944
0.38465 0.07429
0.39439 0.07922
0.40412 0.08421
0.41386 0.08925
0.42360 0.09435
0.43333 0.09949
0.44307 0.10467
0.45281 0.10987
0.46254 0.11510
0.47228 0.12035
0.48202 0.12560
0.49175 0.13087
0.50149 0.13614
0.51123 0.14141
0.52096 0.14668
0.53070 0.15193
0.54044 0.15718
0.55017 0.16241
0.55991 0.16763
0.56965 0.17282
0.57938 0.17800
0.58912 0.18315
0.59886 0.18827
0.60859 0.19337
0.61833 0.19844
0.62806 0.20348
0.63780 0.20849
0.64754 0.21346
0.65727 0.21840
0.66701 0.22330
0.67675 0.22817
0.68648 0.23300
0.69622 0.23780
0.70596 0.24256
0.71569 0.24728
0.72543 0.25196
0.73517 0.25660
0.74490 0.26120
0.75464 0.26577
0.76438 0.27029
0.77411 0.27478
0.78385 0.27922
0.80008 0.28654
0.80981 0.29088
0.81955 0.29518
0.82929 0.29944
0.83902 0.30366
0.84876 0.30784
0.85850 0.31198
0.86823 0.31608
0.87797 0.32014
0.88771 0.32417
0.89744 0.32815
0.90718 0.33210
0.91692 0.33601
0.92665 0.33988
0.93639 0.34372
0.94612 0.34752
0.95586 0.35128
0.96560 0.35500
0.97533 0.35869
0.98507 0.36235
0.99481 0.36596
1.00454 0.36955
1.01428 0.37310
1.02402 0.37661
1.03375 0.38009
1.04349 0.38354
1.05323 0.38695
1.06296 0.39033
1.07270 0.39368
1.08244 0.39699
1.09217 0.40027
1.10191 0.40353
1.11165 0.40675
1.12138 0.40993
1.13112 0.41309
1.14086 0.41622
1.15059 0.41932
1.16033 0.42239
1.17006 0.42543
1.17980 0.42844
1.18954 0.43142
1.19927 0.43438
1.20901 0.43731
1.21226 0.43828
1.22199 0.44117
1.23173 0.44403
1.24147 0.44686
1.25120 0.44967
1.26094 0.45246
1.27068 0.45522
1.28041 0.45795
1.29015 0.46066
1.29988 0.46334
1.30962 0.46600
1.32585 0.47037
1.33559 0.47296
1.34532 0.47553
1.35506 0.47808
1.36479 0.48060
1.37453 0.48311
1.38427 0.48558
1.39400 0.48804
1.40374 0.49047
1.41348 0.49289
1.42321 0.49528
1.43295 0.49765
1.44918 0.50155
1.45891 0.50387
1.47514 0.50768
1.48488 0.50994
1.49462 0.51219
1.50435 0.51441
1.51409 0.51661
1.52382 0.51880
1.53356 0.52096
1.54330 0.52311
1.55303 0.52524
1.56277 0.52735
1.57251 0.52944
1.58224 0.53152
1.59198 0.53358
1.60172 0.53562
1.61145 0.53764
1.62119 0.53965
1.63093 0.54164
1.64066 0.54362
1.65040 0.54558
1.66014 0.54752
1.66987 0.54944
1.67961 0.55135
1.68935 0.55325
1.69908 0.55513
1.70882 0.55699
1.71856 0.55884
1.72829 0.56068
1.73803 0.56250
1.74776 0.56431
1.75750 0.56610
1.76724 0.56787
1.77697 0.56964
1.78671 0.57139
1.79645 0.57312
1.80618 0.57485
1.81592 0.57655
1.82566 0.57825
1.83539 0.57993
1.85162 0.58271
1.86136 0.58436
1.87109 0.58599
1.88083 0.58762
1.89057 0.58923
1.90030 0.59083
1.91004 0.59241
1.91978 0.59399
1.92951 0.59555
1.93925 0.59710
1.95548 0.59967
1.96521 0.60119
1.97495 0.60270
1.98469 0.60420
1.99442 0.60568
2.01065 0.60814
2.02039 0.60960
2.03012 0.61105
2.03986 0.61249
2.05609 0.61486
2.07232 0.61721
2.08205 0.61860
2.09179 0.61999
2.10153 0.62136
2.11126 0.62272
2.12100 0.62408
2.13073 0.62543
2.14047 0.62676
2.15021 0.62809
2.15994 0.62941
2.16968 0.63072
2.17942 0.63202
2.18915 0.63331
2.19889 0.63459
2.20863 0.63586
2.21836 0.63713
2.22810 0.63839
2.23784 0.63963
2.24757 0.64087
2.25731 0.64210
2.26705 0.64333
2.27678 0.64454
2.28652 0.64575
2.29626 0.64695
2.30599 0.64814
2.31573 0.64932
2.32547 0.65050
2.33520 0.65166
2.34494 0.65282
2.35467 0.65397
2.36441 0.65512
2.37415 0.65626
2.38388 0.65739
2.39362 0.65851
2.40336 0.65962
2.41958 0.66147
2.42932 0.66256
2.43906 0.66365
2.44230 0.66402
2.45204 0.66510
2.46178 0.66617
2.47151 0.66724
2.48125 0.66830
2.49099 0.66935
2.50072 0.67040
2.51046 0.67144
2.52020 0.67247
2.52993 0.67350
2.53967 0.67452
2.54941 0.67554
2.55914 0.67655
2.56888 0.67755
2.57861 0.67854
2.58835 0.67954
2.59809 0.68052
2.60133 0.68085
2.61107 0.68182
2.62081 0.68279
2.63054 0.68376
2.64028 0.68472
2.65002 0.68567
2.65975 0.68662
2.66949 0.68756
2.68572 0.68912
2.69545 0.69005
2.70519 0.69097
2.71493 0.69189
2.72466 0.69280
2.73440 0.69370
2.74414 0.69460
2.75387 0.69550
2.76361 0.69639
2.77335 0.69728
2.78308 0.69816
2.79282 0.69903
2.80255 0.69990
2.81229 0.70077
2.82203 0.70163
2.83176 0.70248
2.84150 0.70333
2.85124 0.70418
2.86097 0.70502
2.87071 0.70585
2.88045 0.70669
2.89018 0.70751
2.89992 0.70834
2.90966 0.70915
2.91939 0.70997
2.92913 0.71078
2.93887 0.71158
2.94860 0.71238
2.95834 0.71318
2.96808 0.71397
2.97781 0.71475
2.98755 0.71554
2.99729 0.71631
3.00702 0.71709
3.01676 0.71786
3.02649 0.71863
3.03623 0.71939
3.04597 0.72014
3.05570 0.72090
3.06544 0.72165
3.07518 0.72239
3.08491 0.72314
3.09465 0.72387
3.10439 0.72461
3.11412 0.72534
3.12386 0.72606
3.13360 0.72679
3.14333 0.72751
3.15307 0.72822
3.16281 0.72893
3.17254 0.72964
3.18228 0.73034
3.19202 0.73104
3.20175 0.73174
3.21149 0.73243
3.22123 0.73312
3.23096 0.73381
3.24070 0.73449
3.25043 0.73517
3.26017 0.73585
3.26991 0.73652
3.27964 0.73719
3.28938 0.73785
3.29912 0.73852
3.30885 0.73918
3.31859 0.73983
3.32833 0.74048
3.33806 0.74113
3.34780 0.74178
3.35754 0.74242
3.36727 0.74306
3.37701 0.74370
3.38675 0.74433
3.39648 0.74496
3.40622 0.74559
3.41596 0.74621
3.42569 0.74683
3.43543 0.74745
3.44517 0.74807
3.45490 0.74868
3.46464 0.74929
3.47437 0.74990
3.48411 0.75050
3.49385 0.75110
3.50358 0.75170
3.51332 0.75229
3.52306 0.75288
3.53279 0.75347
3.54253 0.75406
3.55876 0.75503
3.56200 0.75522
3.57174 0.75580
3.58148 0.75638
3.59121 0.75695
3.60095 0.75752
3.61069 0.75809
3.62042 0.75865
3.63016 0.75922
3.63990 0.75977
3.64963 0.76033
3.65937 0.76089
3.66911 0.76144
3.67884 0.76199
3.68858 0.76253
3.69182 0.76272
3.70156 0.76326
3.71130 0.76380
3.72103 0.76434
3.73077 0.76488
3.74051 0.76541
3.75024 0.76594
3.75998 0.76647
3.76972 0.76700
3.77945 0.76752
3.78919 0.76804
3.79893 0.76856
3.80866 0.76908
3.81840 0.76960
3.82813 0.77011
3.83138 0.77028
3.84112 0.77079
3.85085 0.77130
3.86708 0.77214
3.88331 0.77297
3.89954 0.77380
3.91901 0.77479
3.92875 0.77528
3.93199 0.77544
3.94822 0.77625
3.95796 0.77674
3.96769 0.77722
/
\plot
-3.45154 1.00000
1.07270 1.00050 %PDFtex ei piirrä vaakasuorien janojen oikeaa laitaa
1.07594 0.99999
1.08568 0.99997
1.09542 0.99992
1.10515 0.99977
1.11489 0.99949
1.12463 0.99897
1.13436 0.99814
1.14410 0.99689
1.15384 0.99512
1.16357 0.99274
1.17331 0.98965
1.18305 0.98578
1.19278 0.98108
1.20252 0.97549
1.21226 0.96899
1.22199 0.96155
1.23173 0.95319
1.24147 0.94391
1.25120 0.93373
1.26094 0.92267
1.27068 0.91079
1.28690 0.88926
1.29664 0.87538
1.30638 0.86084
1.31611 0.84569
1.32585 0.82999
1.33559 0.81378
1.34532 0.79712
1.35506 0.78004
1.36479 0.76260
1.37453 0.74483
1.38427 0.72676
1.39400 0.70845
1.40374 0.68991
1.41348 0.67118
1.42321 0.65229
1.43944 0.62050
1.44918 0.60128
1.45891 0.58198
1.46865 0.56261
1.47839 0.54320
1.48812 0.52375
1.49786 0.50428
1.50760 0.48481
1.51733 0.46535
1.52707 0.44591
1.53681 0.42652
1.54654 0.40719
1.55628 0.38793
1.56602 0.36877
1.57575 0.34972
1.58549 0.33081
1.59523 0.31207
1.60496 0.29351
1.61470 0.27517
1.62444 0.25708
1.63417 0.23927
1.64391 0.22179
1.65365 0.20467
1.66338 0.18796
1.67312 0.17171
1.68285 0.15595
1.69259 0.14074
1.70233 0.12614
1.71206 0.11218
1.72180 0.09893
1.73154 0.08644
1.74127 0.07474
1.75101 0.06388
1.76075 0.05390
1.77048 0.04483
1.78022 0.03667
1.78996 0.02945
1.79969 0.02316
1.80943 0.01778
1.81917 0.01327
1.82890 0.00958
1.83864 0.00666
1.84838 0.00442
1.85811 0.00278
1.86785 0.00163
1.87759 0.00088
1.88732 0.00043
1.89706 0.00018
1.90679 0.00007
1.91653 0.00002
1.92627 0.00000
3.96769 0.00050
/
\endpicture
} \end{centering}

Then $g$ and $\gamma$ are in $C^\infty(\R)$. Moreover 

\begin{enumerate}
\item $0 \leq \gamma \leq 1$,
\item $\gamma(x) = 1 \Leftrightarrow x \leq 1$,
\item $\gamma(x) = 0 \Leftrightarrow x \geq 2$,
\item $\lVert \gamma \rVert_{L^\infty(\R)} = 1$ and $\lVert \gamma' \rVert_{L^\infty(\R)} \leq 8 e$.
\end{enumerate}
\end{lemma}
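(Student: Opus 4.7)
My plan is to handle the three kinds of claims separately: the smoothness of $g$ and $\gamma$, the pointwise properties (1)--(3) together with $\lVert\gamma\rVert_\infty=1$, and finally the derivative bound $\lVert\gamma'\rVert_\infty\le 8e$. The function $\gamma$ is clearly the standard smooth transition from $1$ to $0$ bridging the plateaus $\{x\le 1\}$ and $\{x\ge 2\}$, so the qualitative claims are routine bump-function bookkeeping; the real work is in producing the explicit constant in the last inequality.

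For smoothness I would first invoke the classical fact that $g\in C^\infty(\R)$: an induction shows that every derivative of $x\mapsto e^{-1/x}$ has the form $P(1/x)e^{-1/x}$ for some polynomial $P$, and the exponential dominates, so $g$ extends smoothly by zero across $0$. Next I would check that the denominator $D(x)=g(2-x)+g(x-1)$ is strictly positive on all of $\R$, because the open half-lines $\{x<2\}$ and $\{x>1\}$ cover $\R$ and on each the corresponding summand is positive. Hence $\gamma=g(2-x)/D(x)$ is a quotient of $C^\infty$ functions with nonvanishing denominator, hence $C^\infty$. A three-case split on $x\le 1$, $x\ge 2$, and $1<x<2$ then immediately gives the iff-statements, the bound $0\le \gamma\le 1$, and $\lVert\gamma\rVert_{L^\infty(\R)}=1$.

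For the derivative bound, off $[1,2]$ the function $\gamma$ is locally constant so $\gamma'=0$ there. On $(1,2)$ I would differentiate explicitly using $g'(t)=t^{-2}g(t)$ for $t>0$ to obtain
\begin{equation*}
\gamma'(x)=-\frac{g(2-x)\,g(x-1)}{\bigl(g(2-x)+g(x-1)\bigr)^2}\left[\frac{1}{(2-x)^2}+\frac{1}{(x-1)^2}\right].
\end{equation*}
The expression is symmetric under $x\mapsto 3-x$, so it suffices to bound it on $(1,3/2]$, where $g(x-1)\le g(2-x)$. Writing $a=g(2-x)$ and $b=g(x-1)$, I would use $(a+b)^2\ge a^2$ to replace the quotient $ab/(a+b)^2$ by $b/a=\exp\bigl(1/(2-x)-1/(x-1)\bigr)$, then observe that on $(1,3/2]$ the factor $e^{1/(2-x)}$ is bounded by $e^2$, and invoke the elementary inequality $u^2e^{-u}\le 4e^{-2}$ for $u\ge 2$ with $u=1/(x-1)$ to absorb the polynomial blow-up of $1/(x-1)^2$. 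Collecting constants then yields a bound comfortably below $8e$.

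The main obstacle is precisely this last estimate: the naive inequality $ab/(a+b)^2\le 1/4$ is useless because $1/(x-1)^2$ is unbounded as $x\to 1^+$, so one must genuinely exploit the exponentially small ratio $b/a$ coming from $g$ itself to tame the singularity. Once that idea is in place, the rest is arithmetic.
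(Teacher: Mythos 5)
Your proposal is correct, and for the derivative bound it takes a genuinely different (and somewhat sharper) route than the paper. The paper bounds the two quantities in the quotient rule expression separately: it estimates the numerator $\lvert g'(2-x)g(x-1)+g(2-x)g'(x-1)\rvert$ by $2\lVert g\rVert_{L^\infty(]0,1[)}\lVert g'\rVert_{L^\infty(]0,1[)}=2\cdot e^{-1}\cdot 4e^{-2}$, observing that $g'(t)=t^{-2}e^{-1/t}$ is uniformly bounded on $(0,1)$ with maximum $4e^{-2}$ at $t=\tfrac12$, and bounds the denominator from below by the uniform constant $(e^{-2})^2$ which it had already derived for the smoothness argument. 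You instead expand $g'(t)=t^{-2}g(t)$ into the formula for $\gamma'$, exploit the symmetry $x\mapsto 3-x$ to restrict to $(1,3/2]$, and use $ab/(a+b)^2\le b/a$ there so that the exponential factor $e^{-1/(x-1)}$ survives to tame the $(x-1)^{-2}$ blow-up via $u^2e^{-u}\le 4e^{-2}$ for $u\ge 2$; this gives a bound of $8$, a bit below the paper's $8e$. The underlying insight --- that the essential singularity of $e^{-1/t}$ at $0$ dominates the polynomial blow-up from differentiating --- is the same in both, but the paper hides it inside the uniform bound on $g'$ while you handle the competing factors explicitly. Your remark that the crude inequality $ab/(a+b)^2\le\tfrac14$ alone is useless is a fair diagnosis of why a more quantitative argument is needed here, though note the paper sidesteps that issue entirely rather than refining it. The smoothness and properties (1)--(3) are handled essentially the same way in both proofs, the only small difference being that the paper records the explicit lower bound $g(2-x)+g(x-1)\ge e^{-2}$ (so it can be reused later) whereas you only note positivity.
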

\begin{proof}
We take it as a well known fact that $g$ is smooth. Thus $\gamma$ is smooth if the denominator is never zero. Let us find a lower bound for $g(2-x)+g(x-1)$.

The function $g$ is always non-negative. Thus $g(2-x) + g(x-1) \geq \max\{g(2-x), g(x-1)\}$. Because $g$ is increasing, $g(2-x)$ is decreasing and $g(x-1)$ increasing. We have $g(2-x) \geq g(x-1) \Leftrightarrow x \leq \frac{3}{2}$ and thus $\max\{g(2-x), g(x-1)\} \geq g(2-\frac{3}{2}) = g(\frac{1}{2}) = e^{-2}$. This implies the smoothness of $\gamma$.

Because $g(x) \geq 0$ for all $x\in\R$, we have either $\gamma(x) = 0$, or $\gamma(x) = \big(1 + g(x-1)/g(2-x)\big)^{-1} \leq 1$, so property 1 holds. Also $\gamma(x) = 1$ if and only if $g(x-1) = 0$ which means that $x \leq 1$. Moreover $\gamma(x) = 0$ if and only if $g(2-x) = 0$ so $x \geq 2$. Thus properties 2 and 3 hold.

Properties 1 and 2 imply $\lVert \gamma \rVert_{L^\infty(\R)} = 1$. Clearly $\gamma'(x) = 0$ if $x \not\in \left]1,2\right[$. If $1 < x < 2$, then
\begin{equation}
\begin{split}
|\gamma'(x)| &= \Big| \frac{-g'(2-x)g(x-1) - g(2-x)g'(x-1)}{\big(g(2-x)+g(x-1)\big)^2} \Big| \\
&\leq \frac{2\lVert g \rVert_{L^\infty(]0,1[)} \lVert g' \rVert_{L\infty(]0,1[)}}{\min\big(g(2-x)+g(x-1)\big)^2} = \frac{2\cdot e^{-1} \cdot 4 e^{-2}}{e^{-4}} = 8e.
\end{split}\end{equation}
\end{proof}

Next we build the cut-off function. The integral operator whose fixed point we wish to find will have weights of the form $(z-z_0)^{-1}$ after integration by parts. Thus we will also need some norm estimates for $h(z-z_0)^{-1}$ and its derivative, where $h$ is the cut-off function. We will use a mean value inequality of the form ``if $\Omega$ is convex and $f \in C^1(\Omega,\C)$ then $|f(x)-f(y)| \leq \big\lVert |\nabla f | \big\rVert_\infty |x-y|$'', which is easily reduced to the case of a real function on an interval.

\begin{lemma}
\label{testFexist}
Let $z_0 \in \Omega$ and $1 > \delta > 0$. Then there exists a test function $h \in C^\infty_0(\Omega)$ which has the properties
\begin{enumerate}
\item $0 \leq h \leq 1$,
\item $h(z) = 0$ when $|z-z_0| < \frac{\delta}{2}$,
\item $h(z) = 1$ when $\delta \leq |z-z_0|$ and $|z| \leq 1-\delta$,
\item If $\mho := \{z \in \Omega \mid h(z) \neq 1\}$, then $m(\mho) \leq 2\pi\delta$.
\item $\lVert \frac{h}{z-z_0} \rVert_\infty, \lVert \frac{h}{\wbar{z}-\wbar{z_0}} \rVert_\infty \leq \frac{2}{\delta}$, $\lVert \frac{h}{z-z_0} \rVert_{C^1(\Omega)}, \lVert \frac{h}{\wbar{z}-\wbar{z_0}} \rVert_{C^1(\Omega)} \leq \frac{200}{\delta^2}$,
\end{enumerate}
\end{lemma}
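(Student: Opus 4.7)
\medskip

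\textbf{Proof plan.} The idea is to build $h$ as a product $h = h_1 h_2$, where $h_1$ is a smooth cut-off that removes the singularity at $z_0$, and $h_2$ is a smooth cut-off that pushes the function away from $\partial\Omega$. Using the mollifier $\gamma$ constructed in the previous lemma, I would define
\begin{equation*}
h_1(z) = 1 - \gamma\!\left(\frac{2\lvert z-z_0\rvert}{\delta}\right), \qquad h_2(z) = \gamma\!\left(\frac{2(\lvert z\rvert - 1 + \delta)}{\delta} + 1\right).
\end{equation*}
The function $h_1$ vanishes for $\lvert z-z_0\rvert\leq\delta/2$ and equals $1$ for $\lvert z-z_0\rvert\geq\delta$; $h_2$ equals $1$ on $\lvert z\rvert\leq 1-\delta$ and vanishes for $\lvert z\rvert\geq 1-\delta/2$. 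Although $\lvert z-z_0\rvert$ and $\lvert z\rvert$ fail to be smooth at $z_0$ and $0$ respectively, each $h_j$ is constant in a neighborhood of the bad point, so $h_1,h_2\in C^\infty(\R^2)$ and $h=h_1h_2\in C^\infty_0(\Omega)$.

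Properties 1--3 are immediate from the construction and the elementary properties of $\gamma$ given in Lemma \ref{perusSilea}. For property 4, note that $\mho \subseteq B(z_0,\delta)\cup\{z\in\Omega:\lvert z\rvert>1-\delta\}$, so
\begin{equation*}
m(\mho) \leq \pi\delta^2 + \pi\bigl(1-(1-\delta)^2\bigr) = \pi\delta^2 + \pi(2\delta-\delta^2) = 2\pi\delta.
\end{equation*}

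For property 5, since $h$ vanishes on $\lvert z-z_0\rvert<\delta/2$, the function $h/(z-z_0)$ extends by $0$ to a $C^\infty_0(\Omega)$ function. On its support $\lvert z-z_0\rvert\geq \delta/2$, and $|h|\leq 1$ gives $\lVert h/(z-z_0)\rVert_\infty \leq 2/\delta$, and analogously for $h/(\wbar{z}-\wbar{z_0})$. For the $C^1$ bound I would use the product rule
\begin{equation*}
\partial_k\!\left(\frac{h}{z-z_0}\right) = \frac{\partial_k h}{z-z_0} + h\,\partial_k\!\left(\frac{1}{z-z_0}\right), \qquad k=1,2,
\end{equation*}
the fact that $|\partial_k(1/(z-z_0))|\leq 1/\lvert z-z_0\rvert^2\leq 4/\delta^2$ on $\operatorname{supp} h$, and the chain-rule estimates $\lvert \nabla h_1\rvert, \lvert \nabla h_2\rvert \leq (8e)\cdot(2/\delta)\cdot\bigl\lVert\nabla|z-z_0|\bigr\rVert_\infty \leq 16e/\delta$ obtained from $\lVert \gamma'\rVert_\infty \leq 8e$ (Lemma \ref{perusSilea}). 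Collecting these, $|\nabla h|\leq 32e/\delta$ and hence $|\partial_k(h/(z-z_0))|\leq 64e/\delta^2+4/\delta^2$, which together with the $L^\infty$ bound $2/\delta\leq 2/\delta^2$ yields the stated bound $\leq 200/\delta^2$ for the $C^1$ norm (the factor is a loose but comfortable margin).

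The proof has no mathematical obstacle beyond careful bookkeeping; the main subtlety worth stating is that one must verify that $h_1,h_2$ inherit smoothness from $\gamma$ despite the non-smooth radial functions inside them, which follows from the fact that each $h_j$ is locally constant at the bad point.
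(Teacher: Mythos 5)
Your construction is identical to the paper's proof: your $h_1$ and $h_2$ are exactly the paper's $\gamma_S$ and $\gamma_B$, and the measure estimate, the chain-rule bound $\lvert\nabla h\rvert \leq 32e/\delta$ via $\lVert\gamma'\rVert_\infty \leq 8e$, and the product-rule estimate for $h/(z-z_0)$ all match. The final arithmetic is slightly compressed (you bound $\lvert\partial_k(h/(z-z_0))\rvert$ componentwise rather than $\lvert\nabla(h/(z-z_0))\rvert \leq \lvert\nabla h\rvert/\lvert z-z_0\rvert + \lvert h\rvert\sqrt{2}/\lvert z-z_0\rvert^2 \leq (64e+4\sqrt{2})/\delta^2$ directly, which is the step that comfortably lands under $200/\delta^2$), but the approach is the same as the paper's and correct.
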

\begin{proof}
Take $\gamma$ as in the previous lemma.

\begin{multicols}{2}
\begin{tabular}{c}
	\mbox{\beginpicture
\setcoordinatesystem units <2.39234cm,2.39234cm>
\put {\phantom{.}} at -1.25400 -0.71769
\put {\phantom{.}} at -1.25400 1.41863
\put {\phantom{.}} at 1.25400 -0.71769
\put {\phantom{.}} at 1.25400 1.41863
\setlinear
\setshadesymbol <z,z,z,z> ({\fiverm .})
\circulararc 360 degrees from 0.02500 0.35000 center at 0.00000 0.35000
\circulararc 360 degrees from 0.42500 0.35000 center at 0.40000 0.35000
\put {$z_0$} at 0.50 0.29

\circulararc 360 degrees from 1.00000 0.35000 center at 0.00000 0.35000
\circulararc 360 degrees from 0.89500 0.35000 center at 0.00000 0.35000
\circulararc 360 degrees from 0.79000 0.35000 center at 0.00000 0.35000
\circulararc 360 degrees from 0.61000 0.35000 center at 0.40000 0.35000
\circulararc 360 degrees from 0.50500 0.35000 center at 0.40000 0.35000

\endpicture
}\\
	\mbox{\beginpicture
\setcoordinatesystem units <2.39234cm,2.39234cm>
\put {\phantom{.}} at -1.25400 -0.71769
\put {\phantom{.}} at -1.25400 1.41863
\put {\phantom{.}} at 1.25400 -0.71769
\put {\phantom{.}} at 1.25400 1.41863

\circulararc 360 degrees from 0.00000 0.02500 center at 0.00000 0.00000
\circulararc 360 degrees from 0.40000 0.02500 center at 0.40000 0.00000
\setlinear
\setplotsymbol({.})
\setshadesymbol <z,z,z,z> ({\fiverm .})
\putrule from -1.25150 0.00000 to 1.25150 0.00000
\plot 
-1.25331 0.00000
-0.88769 0.00050 %PDFtex ei piirrä vaakasuorien janojen oikeaa laitaa
-0.88700 0.00001
-0.88630 0.00002
-0.88561 0.00004
-0.88491 0.00009
-0.88422 0.00018
-0.88353 0.00033
-0.88283 0.00055
-0.88214 0.00089
-0.88145 0.00136
-0.88075 0.00200
-0.88006 0.00284
-0.87936 0.00391
-0.87867 0.00524
-0.87798 0.00686
-0.87728 0.00881
-0.87659 0.01109
-0.87590 0.01374
-0.87520 0.01677
-0.87451 0.02021
-0.87381 0.02405
-0.87312 0.02831
-0.87243 0.03300
-0.87173 0.03812
-0.87104 0.04367
-0.87034 0.04965
-0.86965 0.05604
-0.86896 0.06285
-0.86826 0.07007
-0.86757 0.07768
-0.86688 0.08567
-0.86618 0.09404
-0.86549 0.10275
-0.86479 0.11181
-0.86410 0.12119
-0.86341 0.13088
-0.86271 0.14087
-0.86202 0.15113
-0.86133 0.16165
-0.86063 0.17242
-0.85994 0.18341
-0.85924 0.19462
-0.85855 0.20603
-0.85786 0.21763
-0.85716 0.22940
-0.85647 0.24132
-0.85578 0.25340
-0.85508 0.26561
-0.85439 0.27794
-0.85369 0.29038
-0.85300 0.30293
-0.85231 0.31557
-0.85161 0.32830
-0.85092 0.34110
-0.85023 0.35397
-0.84953 0.36690
-0.84884 0.37989
-0.84814 0.39292
-0.84745 0.40600
-0.84676 0.41910
-0.84606 0.43224
-0.84537 0.44541
-0.84468 0.45859
-0.84398 0.47179
-0.84329 0.48500
-0.84259 0.49821
-0.84190 0.51143
-0.84121 0.52464
-0.84051 0.53784
-0.83982 0.55103
-0.83912 0.56420
-0.83843 0.57734
-0.83774 0.59046
-0.83704 0.60355
-0.83635 0.61659
-0.83566 0.62959
-0.83496 0.64254
-0.83427 0.65542
-0.83357 0.66825
-0.83288 0.68099
-0.83219 0.69366
-0.83149 0.70623
-0.83080 0.71871
-0.83011 0.73107
-0.82941 0.74331
-0.82872 0.75542
-0.82802 0.76739
-0.82733 0.77920
-0.82664 0.79085
-0.82594 0.80231
-0.82525 0.81358
-0.82456 0.82463
-0.82386 0.83546
-0.82317 0.84605
-0.82247 0.85638
-0.82178 0.86644
-0.82109 0.87622
-0.82039 0.88568
-0.81970 0.89483
-0.81901 0.90364
-0.81831 0.91210
-0.81762 0.92019
-0.81692 0.92791
-0.81623 0.93523
-0.81554 0.94216
-0.81484 0.94866
-0.81415 0.95475
-0.81346 0.96042
-0.81276 0.96565
-0.81207 0.97046
-0.81137 0.97484
-0.81068 0.97880
-0.80999 0.98234
-0.80929 0.98548
-0.80860 0.98823
-0.80790 0.99061
-0.80721 0.99264
-0.80652 0.99435
-0.80582 0.99576
-0.80513 0.99689
-0.80444 0.99779
-0.80374 0.99848
-0.80305 0.99900
-0.80235 0.99937
-0.80166 0.99962
-0.80097 0.99979
-0.80027 0.99989
-0.79958 0.99995
-0.79889 0.99998
-0.79819 0.99999
-0.79750 1.00000
0.19738 1.00050 %PDFtex ei piirrä vaakasuorien janojen oikeaa laitaa
0.19807 0.99999
0.19877 0.99998
0.19946 0.99995
0.20016 0.99990
0.20085 0.99981
0.20154 0.99966
0.20224 0.99942
0.20293 0.99907
0.20362 0.99858
0.20432 0.99792
0.20501 0.99706
0.20571 0.99597
0.20640 0.99461
0.20709 0.99296
0.20779 0.99098
0.20848 0.98866
0.20917 0.98597
0.20987 0.98290
0.21056 0.97943
0.21126 0.97554
0.21195 0.97124
0.21264 0.96650
0.21334 0.96134
0.21403 0.95575
0.21472 0.94973
0.21542 0.94329
0.21611 0.93644
0.21681 0.92919
0.21750 0.92154
0.21819 0.91351
0.21889 0.90511
0.21958 0.89636
0.22027 0.88727
0.22097 0.87785
0.22166 0.86813
0.22236 0.85812
0.22305 0.84783
0.22374 0.83728
0.22444 0.82649
0.22513 0.81548
0.22582 0.80424
0.22652 0.79282
0.22721 0.78120
0.22791 0.76942
0.22860 0.75747
0.22929 0.74539
0.22999 0.73317
0.23068 0.72082
0.23138 0.70837
0.23207 0.69581
0.23276 0.68316
0.23346 0.67042
0.23415 0.65762
0.23484 0.64474
0.23554 0.63180
0.23623 0.61881
0.23693 0.60577
0.23762 0.59270
0.23831 0.57958
0.23901 0.56644
0.23970 0.55327
0.24039 0.54009
0.24109 0.52689
0.24178 0.51368
0.24248 0.50047
0.24317 0.48725
0.24386 0.47404
0.24456 0.46084
0.24525 0.44766
0.24594 0.43449
0.24664 0.42134
0.24733 0.40823
0.24803 0.39515
0.24872 0.38211
0.24941 0.36911
0.25011 0.35617
0.25080 0.34329
0.25149 0.33048
0.25219 0.31774
0.25288 0.30508
0.25358 0.29252
0.25427 0.28005
0.25496 0.26770
0.25566 0.25547
0.25635 0.24337
0.25704 0.23142
0.25774 0.21962
0.25843 0.20800
0.25913 0.19655
0.25982 0.18531
0.26051 0.17428
0.26121 0.16347
0.26190 0.15290
0.26260 0.14260
0.26329 0.13257
0.26398 0.12282
0.26468 0.11339
0.26537 0.10427
0.26606 0.09550
0.26676 0.08707
0.26745 0.07902
0.26815 0.07134
0.26884 0.06406
0.26953 0.05718
0.27023 0.05071
0.27092 0.04466
0.27161 0.03904
0.27231 0.03385
0.27300 0.02908
0.27370 0.02475
0.27439 0.02083
0.27508 0.01733
0.27578 0.01423
0.27647 0.01152
0.27716 0.00917
0.27786 0.00717
0.27855 0.00550
0.27925 0.00412
0.27994 0.00301
0.28063 0.00213
0.28133 0.00146
0.28202 0.00096
0.28271 0.00060
0.28341 0.00036
0.28410 0.00020
0.28480 0.00010
0.28549 0.00005
0.28618 0.00002
0.28688 0.00001
0.28757 0.00000
0.51236 0.00050 %PDFtex ei piirrä vaakasuorien janojen oikeaa laitaa
0.51305 0.00001
0.51374 0.00002
0.51444 0.00004
0.51513 0.00010
0.51582 0.00019
0.51652 0.00034
0.51721 0.00057
0.51791 0.00091
0.51860 0.00140
0.51929 0.00205
0.51999 0.00290
0.52068 0.00399
0.52137 0.00534
0.52207 0.00698
0.52276 0.00894
0.52346 0.01125
0.52415 0.01392
0.52484 0.01698
0.52554 0.02044
0.52623 0.02431
0.52692 0.02860
0.52762 0.03332
0.52831 0.03847
0.52901 0.04404
0.52970 0.05005
0.53039 0.05647
0.53109 0.06331
0.53178 0.07055
0.53247 0.07818
0.53317 0.08620
0.53386 0.09459
0.53456 0.10333
0.53525 0.11241
0.53594 0.12181
0.53664 0.13152
0.53733 0.14152
0.53803 0.15180
0.53872 0.16234
0.53941 0.17312
0.54011 0.18413
0.54080 0.19535
0.54149 0.20678
0.54219 0.21838
0.54288 0.23016
0.54358 0.24210
0.54427 0.25418
0.54496 0.26640
0.54566 0.27874
0.54635 0.29119
0.54704 0.30374
0.54774 0.31639
0.54843 0.32912
0.54913 0.34193
0.54982 0.35480
0.55051 0.36774
0.55121 0.38073
0.55190 0.39376
0.55259 0.40684
0.55329 0.41995
0.55398 0.43309
0.55468 0.44626
0.55537 0.45944
0.55606 0.47264
0.55676 0.48585
0.55745 0.49906
0.55814 0.51228
0.55884 0.52549
0.55953 0.53869
0.56023 0.55188
0.56092 0.56505
0.56161 0.57819
0.56231 0.59131
0.56300 0.60439
0.56369 0.61743
0.56439 0.63043
0.56508 0.64337
0.56578 0.65625
0.56647 0.66907
0.56716 0.68181
0.56786 0.69447
0.56855 0.70704
0.56925 0.71951
0.56994 0.73186
0.57063 0.74410
0.57133 0.75620
0.57202 0.76816
0.57271 0.77996
0.57341 0.79159
0.57410 0.80304
0.57480 0.81430
0.57549 0.82534
0.57618 0.83615
0.57688 0.84672
0.57757 0.85704
0.57826 0.86708
0.57896 0.87683
0.57965 0.88628
0.58035 0.89541
0.58104 0.90420
0.58173 0.91263
0.58243 0.92070
0.58312 0.92839
0.58381 0.93569
0.58451 0.94259
0.58520 0.94907
0.58590 0.95513
0.58659 0.96077
0.58728 0.96598
0.58798 0.97076
0.58867 0.97511
0.58936 0.97904
0.59006 0.98255
0.59075 0.98567
0.59145 0.98839
0.59214 0.99075
0.59283 0.99276
0.59353 0.99445
0.59422 0.99584
0.59491 0.99696
0.59561 0.99784
0.59630 0.99852
0.59700 0.99903
0.59769 0.99939
0.59838 0.99963
0.59908 0.99980
0.59977 0.99989
0.60047 0.99995
0.60116 0.99998
0.60185 0.99999
0.60255 1.00000
0.79750 1.00050 %PDFtex ei piirrä vaakasuorien janojen oikeaa laitaa
0.79819 0.99999
0.79889 0.99998
0.79958 0.99995
0.80027 0.99989
0.80097 0.99979
0.80166 0.99962
0.80235 0.99937
0.80305 0.99900
0.80374 0.99848
0.80444 0.99779
0.80513 0.99689
0.80582 0.99576
0.80652 0.99435
0.80721 0.99264
0.80790 0.99061
0.80860 0.98823
0.80929 0.98548
0.80999 0.98234
0.81068 0.97880
0.81137 0.97484
0.81207 0.97046
0.81276 0.96565
0.81346 0.96042
0.81415 0.95475
0.81484 0.94866
0.81554 0.94216
0.81623 0.93523
0.81692 0.92791
0.81762 0.92019
0.81831 0.91210
0.81901 0.90364
0.81970 0.89483
0.82039 0.88568
0.82109 0.87622
0.82178 0.86644
0.82247 0.85638
0.82317 0.84605
0.82386 0.83546
0.82456 0.82463
0.82525 0.81358
0.82594 0.80231
0.82664 0.79085
0.82733 0.77920
0.82802 0.76739
0.82872 0.75542
0.82941 0.74331
0.83011 0.73107
0.83080 0.71871
0.83149 0.70623
0.83219 0.69366
0.83288 0.68099
0.83357 0.66825
0.83427 0.65542
0.83496 0.64254
0.83566 0.62959
0.83635 0.61659
0.83704 0.60355
0.83774 0.59046
0.83843 0.57734
0.83912 0.56420
0.83982 0.55103
0.84051 0.53784
0.84121 0.52464
0.84190 0.51143
0.84259 0.49821
0.84329 0.48500
0.84398 0.47179
0.84468 0.45859
0.84537 0.44541
0.84606 0.43224
0.84676 0.41910
0.84745 0.40600
0.84814 0.39292
0.84884 0.37989
0.84953 0.36690
0.85023 0.35397
0.85092 0.34110
0.85161 0.32830
0.85231 0.31557
0.85300 0.30293
0.85369 0.29038
0.85439 0.27794
0.85508 0.26561
0.85578 0.25340
0.85647 0.24132
0.85716 0.22940
0.85786 0.21763
0.85855 0.20603
0.85924 0.19462
0.85994 0.18341
0.86063 0.17242
0.86133 0.16165
0.86202 0.15113
0.86271 0.14087
0.86341 0.13088
0.86410 0.12119
0.86479 0.11181
0.86549 0.10275
0.86618 0.09404
0.86688 0.08567
0.86757 0.07768
0.86826 0.07007
0.86896 0.06285
0.86965 0.05604
0.87034 0.04965
0.87104 0.04367
0.87173 0.03812
0.87243 0.03300
0.87312 0.02831
0.87381 0.02405
0.87451 0.02021
0.87520 0.01677
0.87590 0.01374
0.87659 0.01109
0.87728 0.00881
0.87798 0.00686
0.87867 0.00524
0.87936 0.00391
0.88006 0.00284
0.88075 0.00200
0.88145 0.00136
0.88214 0.00089
0.88283 0.00055
0.88353 0.00033
0.88422 0.00018
0.88491 0.00009
0.88561 0.00004
0.88630 0.00002
0.88700 0.00001
0.88769 0.00000
1.25400 0.00050 %PDFtex ei piirrä vaakasuorien janojen oikeaa laitaa
/
\putrule from -1.00000 0.10000 to -1.00000 -0.10000
\putrule from -0.89500 0.10000 to -0.89500 -0.10000
\putrule from -0.79000 0.10000 to -0.79000 -0.10000
\putrule from 0.19000 0.10000 to 0.19000 -0.10000
\putrule from 0.29500 0.10000 to 0.29500 -0.10000
\putrule from 0.50500 0.10000 to 0.50500 -0.10000
\putrule from 0.61000 0.10000 to 0.61000 -0.10000
\putrule from 0.79000 0.10000 to 0.79000 -0.10000
\putrule from 0.89500 0.10000 to 0.89500 -0.10000
\putrule from 1.00000 0.10000 to 1.00000 -0.10000
\endpicture}
\end{tabular}

Define $\gamma_S(z) = 1 -\gamma\big(\frac{2}{\delta}|z-z_0|\big)$. Now $0 \leq \gamma_S \leq 1$, $\gamma_S(z) = 0$ when $|z-z_0| < \frac{\delta}{2}$ and $\gamma_S(z) = 1$ when $|z-z_0| \geq \delta$.

To get $h$ compactly supported take $\gamma_B(z) = \gamma\Big(\frac{2}{\delta}\big(|z|-(1-\delta)\big)+1\Big)$. Now $0 \leq \gamma_B \leq 1$, $\gamma_B(z) = 0$ when $|z| \geq 1 -\frac{\delta}{2}$ and $\gamma_B(z) = 1$ when $|z|\leq 1 - \delta$.

Finally we define
\begin{equation}
h(z) = \gamma_S(z) \gamma_B(z).
\end{equation}
Properties 1, 2 and 3 are now satisfied and $h\in C^\infty_0(\Omega)$. Moreover $m(\mho) \leq \pi - \pi(1-\delta)^2 + \pi \delta^2 = 2\pi \delta$.
\end{multicols}

To prove the last claim we need to calculate the norms of $h$. This is done using the two-dimensional chain rule and the estimates of the previous lemma for $\gamma$.
\begin{equation}
\begin{split}
\lVert h \rVert_\infty &\leq \lVert \gamma_S \rVert_\infty \lVert \gamma_B \rVert_\infty = 1 \\
| \nabla h | &= \big| \gamma_B \nabla \gamma_S + \gamma_S \nabla \gamma_B \big| \leq \big| \nabla \gamma_S \big| + \big| \nabla \gamma_B \big| \\
&= \Big| \nabla \gamma\big( \frac{2}{\delta}|z-z_0|\big) \Big| + \Big| \nabla \gamma\Big(\frac{2}{\delta}\big(|z|-(1-\delta)\big)+1\Big) \Big| \\
&= \Big| \frac{2}{\delta} \frac{(x-x_0,y-y_0)}{|z-z_0|} \gamma'\Big(\frac{2}{\delta}|z-z_0|\Big)\Big| \\
&\phantom{=}+ \Big| \frac{2}{\delta} \frac{(x,y)}{|z|} \gamma'\Big(\frac{2}{\delta}\big(|z|-(1-\delta)\big)+1\Big)\Big|\\
&\leq \frac{4}{\delta} \lVert \gamma' \rVert_\infty \leq \frac{32e}{\delta} \leq \frac{96}{\delta}.
\end{split}
\end{equation}
To calculate the Lipschitz norms of $h(z-z_0)^{-1}$ we will use the mean value inequality. Note that $h(z) = 0$ when $|z-z_0| < \delta / 2$. This gives us the estimates
\begin{equation}
\begin{split}
\Big\lVert & \frac{h}{z-z_0} \Big\rVert_\infty, \Big\lVert \frac{h}{\wbar{z}-\wbar{z_0}} \Big\rVert_\infty \leq \Big\lVert \frac{1}{|z-z_0|} \Big\rVert_{L^\infty(\Omega\setminus B(z_0, \delta/2))} = \frac{2}{\delta}, \\
\Big\lVert & \frac{h}{z-z_0} \Big\rVert_{C^1(\wbar{\Omega})} = \Big\lVert \frac{h}{z-z_0} \Big\rVert_\infty + \sup_{z_1, z_2 \in \Omega} \frac{\Big| \frac{h(z_1)}{z_1-z_0} - \frac{h(z_2)}{z_2-z_0} \Big|}{|z_1-z_2|} \\
&\leq \frac{2}{\delta} + \sup_{\substack{z\in\Omega\\ |z-z_0| > \frac{\delta}{2}}} \Big| \nabla \frac{h(z)}{z-z_0} \Big| \leq \frac{2}{\delta}\\
&\phantom{\leq} + \lVert h \rVert_\infty \sup_{\substack{z\in\Omega\\ |z-z_0| > \frac{\delta}{2}}} \Big| \Big( \frac{-1}{(z-z_0)^2}, \frac{-i}{(z-z_0)^2} \Big) \Big| + \sup_{\substack{z\in\Omega\\ |z-z_0| > \frac{\delta}{2}}} \Big| \frac{1}{z-z_0} \Big| |\nabla h(z)| \\
&\leq \frac{2}{\delta} + \frac{4\sqrt{2}}{\delta^2} + \frac{2}{\delta}\cdot \frac{96}{\delta} \leq \frac{200}{\delta^2}.
\end{split}
\end{equation}
The estimate for the Lipschitz norm of $h(\wbar{z}-\wbar{z_0})^{-1}$ is deduced similarly.
\end{proof}

\medskip

How to reduce the Schr\"odinger equation $\Delta u + q u = 0$ into an integral equation? In fact we want to solve for the remainder terms $f$ and $g$ in the equations $\Delta u_1 + q_1 u_1 = 0$, $u_1 = e^{in(z-z_0)^2}(1+f)$ and $\Delta u_2 + q_2 u_2 = 0$, $u_2 = e^{in(\wbar{z}-\wbar{z_0})^2}(1+g)$. We will consider the first equation with a generic potential $q$ and later on, starting in theorem \ref{operatorNorm}, we take $q_1$, $q_2$ and the complex conjugate in the exponential into account.

Write $\psi = (z-z_0)^2$, $u = e^{in\psi}w$ and plug it into $\Delta u + q u = 0$. In the plane we have the formula $\Delta = 4\partial\wbar{\partial}$, so we get $4\partial \big(e^{in\psi} \wbar{\partial} w \big) + q e^{in\psi} w=0$. If we denote $V = e^{in\psi}\wbar{\partial} w$ we get a system of first order equations
\begin{equation}
\begin{cases}
\wbar{\partial} w &= e^{-in\psi}V\\
\partial V &= -\frac{1}{4}qe^{in\psi}w
\end{cases}.
\end{equation}
The problem here is that $|e^{\pm in\psi}|$ grows too fast when $n\longrightarrow\infty$. This can be solved by choosing $V = e^{-in\wbar{\psi}}v$ to get
\begin{equation}
\label{schrodingerReduced}
\begin{cases}
\wbar{\partial} w &= e^{-inR}v\\
\partial v &= -\frac{1}{4}qe^{inR}w
\end{cases}.
\end{equation}
Remember that we denote $R = (z-z_0)^2 + (\wbar{z}-\wbar{z_0})^2 = \psi^2 + \wbar{\psi}^2$. To solve this pair of equations we need the Cauchy operators.

\begin{definition}
The {\it Cauchy operators} $\Ca, \wbar{\Ca}$ map a given function $f$ to the functions
\begin{equation}
\Ca f (z) = \frac{1}{\pi} \int_\Omega \frac{f(\xi)}{z-\xi} \,dm(\xi), \quad \wbar{\Ca} f (z) = \frac{1}{\pi} \int_\Omega \frac{f(\xi)}{\wbar{z}-\wbar{\xi}}\,dm(\xi),
\end{equation}
defined on the points where the integrals converge. Also, the \emph{Beurling operators} $\Pi, \wbar{\Pi}$ are defined as the principal value integrals
\begin{equation}
\Pi f (z) = -\frac{1}{\pi} \pv \int_\Omega \frac{f(\xi)}{(z-\xi)^2} dm(\xi), \quad \wbar{\Pi} f (z) = -\frac{1}{\pi} \pv \int_\Omega \frac{f(\xi)}{(\wbar{z} - \wbar{\xi})^2} dm(\xi).
\end{equation}
\end{definition}

\begin{theorem}
\label{cauchyOPbasics}
The operators $\Ca, \wbar{\Ca}, \Pi, \wbar{\Pi}$ have the following properties (all differentiations are done weakly):

\begin{enumerate}
\item If $f \in L^1(\Omega)$ then $\partial \wbar{\Ca} f = \wbar{\partial} \Ca f = f$.

\item If $f \in C(\wbar{\Omega})$, $f_{|\partial\Omega} = 0$ and $\wbar{\partial}f\in L^p(\Omega)$ or $\partial f \in L^p(\Omega)$, $2<p\leq\infty$, then respectively $\Ca \wbar{\partial} f = f$ or $\wbar{\Ca}\partial f = f$.

\item If $f \in L^p(\Omega)$, $p > 1$, then $\partial \Ca f = \Pi f$ and $\wbar{\partial} \wbar{\Ca} f = \wbar{\Pi} f$.

\item The operators $\Ca, \wbar{\Ca}, \Pi$ and $\wbar{\Pi}$ are bounded in $L^p(\Omega)$, $1<p<\infty$. If $2<p<\infty$, $\alpha = 1-\frac{2}{p}$, then $\Ca, \wbar{\Ca}:L^p(\Omega) \to C^\alpha(\wbar{\Omega})$ are bounded.

\item If $2<p<\infty$, $\alpha = 1-\frac{2}{p}$ then
\begin{equation}
\begin{split}
\lVert \Ca f \rVert_p, \lVert \wbar{\Ca} f \rVert_p &\leq C_p \lVert f \rVert_p \, , \\
\lVert \Ca f \rVert_\alpha, \lVert \wbar{\Ca} f \rVert_\alpha &\leq C_\alpha \lVert f \rVert_p \, , \\
\lVert \Pi f \rVert_p, \lVert \wbar{\Pi} f \rVert_p &\leq B_p \lVert f \rVert_p \, ,
\end{split}
\end{equation}
for some $C_\alpha, C_p, B_p < \infty$.
\end{enumerate}
\end{theorem}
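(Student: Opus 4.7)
My plan is to prove the five items in sequence. Items (1)--(3) are basic identities that follow from the distributional theory of the Cauchy kernel and from the Cauchy--Pompeiu representation formula, while items (4) and (5) are classical boundedness results for the Beurling transform and its potential; I would cite the relevant theorems from Vekua's book.

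For (1) I would observe that the kernel $k(\zeta)=\frac{1}{\pi\zeta}$ is the fundamental solution of $\wbar{\partial}$, i.e.\ $\wbar{\partial}k=\delta_0$ as distributions. Extending $f\in L^1(\Omega)$ by zero to $\C$, the operator $\Ca$ becomes the convolution $k\ast f$, so the derivative passes to the kernel and $\wbar{\partial}\Ca f=(\wbar{\partial}k)\ast f=f$ in the distributional sense. The same argument with the conjugate kernel $1/(\pi\wbar{\zeta})$ gives $\partial\wbar{\Ca}f=f$.

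For (2) the tool is the Cauchy--Pompeiu formula: for $f\in C^1(\wbar{\Omega})$,
\begin{equation*}
f(z)=\frac{1}{2\pi i}\int_{\partial\Omega}\frac{f(\zeta)}{\zeta-z}\,d\zeta+\Ca(\wbar{\partial}f)(z).
\end{equation*}
When $f$ vanishes on $\partial\Omega$ the boundary integral disappears, yielding $f=\Ca\wbar{\partial}f$. To accommodate the weaker hypothesis ($f\in C(\wbar{\Omega})$ with only $\wbar{\partial}f\in L^p(\Omega)$, $p>2$) I would approximate $f$ by smooth functions vanishing on $\partial\Omega$ whose $\wbar{\partial}$-derivatives converge in $L^p$, apply the formula to them, and pass to the limit using the continuity $\Ca:L^p(\Omega)\to C^\alpha(\wbar{\Omega})$ from item (4). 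The identity for $\wbar{\Ca}\partial f$ is obtained by the symmetric calculation. For (3), formal differentiation gives $\partial_z\bigl(\frac{1}{\pi(z-\xi)}\bigr)=-\frac{1}{\pi(z-\xi)^2}$, but this is no longer locally integrable; the standard justification is to excise a disk $B(z,\varepsilon)$, differentiate under the integral on the remainder, and send $\varepsilon\to 0$, producing the principal value and hence $\partial\Ca f=\Pi f$ (and $\wbar{\partial}\wbar{\Ca}f=\wbar{\Pi}f$ by conjugation).

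For (4) the $L^p$ boundedness of $\Pi$ for $1<p<\infty$ is the Calder\'on--Zygmund theorem applied to the Beurling transform, whose Fourier multiplier is bounded; the $L^p$ boundedness of $\Ca$ follows from Young's inequality, since the kernel $1/(\pi|\cdot|)$ is integrable on any bounded subset of $\R^2$ (in polar coordinates the singularity is cancelled by the Jacobian); and the $L^p\to C^\alpha$ bound for $p>2$ is a direct H\"older estimate using that $|z-\xi|^{-1}\in L^{p'}(\Omega)$ exactly when $p>2$, combined with a splitting of the integrand near the singularity. Item (5) is then just a restatement of (4) with named constants. The only genuinely delicate step is the approximation in (2): one must build approximants with vanishing boundary trace whose weak derivatives converge in $L^p$; everything else is bookkeeping around classical theorems, and indeed I would appeal directly to Vekua for the hard estimates.
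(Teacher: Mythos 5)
The paper's own proof of this theorem is a single sentence: it simply cites theorems 1.14, 1.19, the remark after 1.20, theorem 1.35 and Chapter I, Section 9 of Vekua's book, with no further argument. Your proposal is therefore strictly more detailed than the paper, but it is fully compatible with it: you correctly identify the mathematical content behind each citation (fundamental solution of $\wbar{\partial}$ for item (1), Cauchy--Pompeiu for item (2), principal-value differentiation for item (3), Calder\'on--Zygmund/Young/H\"older for items (4)--(5)), and you end, as the paper does, by deferring the hard estimates to Vekua. There is no gap.

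One small remark on item (2): the approximation route you sketch (building smooth approximants that vanish on $\partial\Omega$ with $\wbar{\partial}$-convergence in $L^p$) is workable but is the most delicate way in. The argument Vekua actually uses is cleaner and avoids constructing approximants: set $g = f - \Ca\wbar{\partial}f$; by item (1) one has $\wbar{\partial}g = 0$ weakly, so $g$ is holomorphic in $\Omega$ and, by item (4), continuous on $\wbar{\Omega}$. The function $-\Ca\wbar{\partial}f$ is continuous on all of $\C$, holomorphic outside $\wbar{\Omega}$, and tends to zero at infinity (because $\wbar{\partial}f \in L^p$ with $p>2$), and it agrees with $g$ on $\partial\Omega$ since $f$ vanishes there. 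Gluing gives an entire function vanishing at infinity, which is identically zero by Liouville, hence $f = \Ca\wbar{\partial}f$. This sidesteps the ``only genuinely delicate step'' you flag; the rest of your outline is sound.
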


\begin{remark}
Note that the $C^\alpha$ and $L^p$ norms are taken only in $\wbar{\Omega}$ unless otherwise specified.
\end{remark}

\begin{proof}
The results follow from theorems 1.14, 1.19, the remark after 1.20, 
theorem 1.35
and chapter I, section 9 in \cite{vekua}.
\end{proof}

Using property 1 we can solve the pair of equations \eqref{schrodingerReduced}. It is easily seen that $\wbar{\partial} f = g$ is equivalent to $f = g_0 + \Ca g$ for some holomorphic $g_0$. If we wanted to find all the solutions of the system, we would need to express $g_0$ with the help of boundary values of $f$. This is not needed since we are only interested in finding some solutions. Thus we may choose $g_0$ as we wish.

For $\wbar{\partial} v = -\frac{1}{4}qe^{inR}w$ we take the solution $v = - \frac{1}{4} \Ca\big( e^{inR}qw\big)$ and for $\partial w = e^{-inR} v$ we choose the solution $w = 1 + \wbar{\Ca}\big( e^{-inR} v\big)$. By combining these two we get an integral equation for $w$ of the form
\begin{equation}
\label{fixedPointEqn}
w = 1 - \frac{1}{4}\wbar{\Ca}\big( e^{-inR} \Ca(e^{inR}qw)\big).
\end{equation}
We will show that the operator on the right hand side is a contraction in a suitable H\"older space. To prove that, we need to have an estimate for the H\"older norm of $e^{inR}$. We will also need to integrate by parts at some point to take advantage of the oscillations of $e^{inR}$. Note that the operator is composition of operators of the form $\Ca(e^{\pm inR} \cdot)$. We will prove an integration by parts formula for those.

\begin{lemma}
\label{expHolder}
If $n > 1$, $z_0\in\Omega$ and $\alpha \in ]0, 1[$ then 
\begin{equation}
\lVert e^{i n R} \rVert_\alpha \leq 11 n^\alpha.
\end{equation}
\end{lemma}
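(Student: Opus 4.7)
The plan is a two-term estimate, one for the sup and one for the H\"older seminorm, with the seminorm handled by a dyadic split at scale $|z_1-z_2|\sim 1/n$.

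First I would observe that $R(z)=(z-z_0)^2+(\wbar{z}-\wbar{z_0})^2 = 2\Re\big((z-z_0)^2\big)$ is real-valued, so $|e^{inR}|\equiv 1$ and thus $\sup_{\wbar\Omega}|e^{inR}|=1\le n^{\alpha}$ for $n>1$. That takes care of the $L^\infty$ part of the H\"older norm.

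For the seminorm I would use two elementary bounds on the increment. On one hand $|e^{inR(z_1)}-e^{inR(z_2)}|\le 2$. On the other, since $t\mapsto e^{it}$ is $1$-Lipschitz on $\R$,
\begin{equation}
|e^{inR(z_1)}-e^{inR(z_2)}|\le n\,|R(z_1)-R(z_2)|.
\end{equation}
Factoring $(z_1-z_0)^2-(z_2-z_0)^2=(z_1-z_2)(z_1+z_2-2z_0)$ and using the trivial estimate $|z_j-z_0|\le 2$ in $\wbar\Omega$ gives $|R(z_1)-R(z_2)|\le 8|z_1-z_2|$; hence the second bound reads $8n|z_1-z_2|$.

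To finish, split into the cases $|z_1-z_2|\le 1/n$ and $|z_1-z_2|>1/n$: in the first use the $8n|z_1-z_2|$ bound to get
\begin{equation}
\frac{|e^{inR(z_1)}-e^{inR(z_2)}|}{|z_1-z_2|^\alpha}\le 8n\,|z_1-z_2|^{1-\alpha}\le 8n\cdot n^{-(1-\alpha)}=8n^{\alpha},
\end{equation}
and in the second use the trivial bound $2$ to get $2/|z_1-z_2|^\alpha<2n^\alpha$. Combining with the sup bound yields $\|e^{inR}\|_\alpha\le n^\alpha+8n^\alpha+2n^\alpha = 11 n^\alpha$. There is no serious obstacle; the only thing to be a little careful about is keeping track of the diameter constant for $\wbar\Omega$ in the factorization of $R(z_1)-R(z_2)$, which is where the factor $8$ (and thence $11$) enters.
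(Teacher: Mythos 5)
Your proof is correct and takes essentially the same route as the paper: both split the H\"older seminorm at scale $|z_1-z_2|\sim 1/n$, use the trivial bound $2$ on the far scale, and a Lipschitz-type bound of size $8n|z_1-z_2|$ on the near scale, then combine with the trivial sup bound. The only cosmetic difference is that the paper applies a two-dimensional mean value inequality to $e^{inR}$ directly (via $|\nabla e^{inR}|=4n|z-z_0|\le 8n$) whereas you use the $1$-Lipschitzness of $t\mapsto e^{it}$ and then factor $R(z_1)-R(z_2)$; the resulting constant $8$ is the same.
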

\begin{proof}
For positive $r$ the inequality $r \leq c r^\alpha$ holds if and only if $r \leq c^{\frac{1}{1-\alpha}}$, so $|z_1 - z_2| \leq n^{\alpha-1} |z_1 - z_2|^\alpha$ when $|z_1 - z_2| < n^{-1}$. By a two dimensional mean value inequality we get
\begin{equation}
\begin{split}
\sup_{z_1, z_2 \in \Omega}& \frac{\big| e^{inR(z_1)} - e^{inR(z_2)}\big|}{|z_1 - z_2|^\alpha} \\
&\leq \sup_{\substack{z_1, z_2 \in \Omega\\ |z_1 - z_2| < n^{-1}}} \frac{\big| e^{inR(z_1)} - e^{inR(z_2)}\big|}{|z_1 - z_2|^\alpha} + \sup_{\substack{z_1, z_2 \in \Omega \\ |z_1 - z_2| \geq n^{-1}}} \frac{\big| e^{inR(z_1)} - e^{inR(z_2)}\big|}{|z_1 - z_2|^\alpha} \\
&\leq \sup_{\substack{z_1, z_2 \in \Omega\\ |z_1 - z_2| < n^{-1}}} \lVert 4 i n (z - z_0) e^{inR(z)} \rVert_{L^\infty(\Omega)} \frac{|z_1 - z_2|^{\phantom{\alpha}}}{|z_1 - z_2|^\alpha} + \sup_{\substack{z_1, z_2 \in \Omega \\ |z_1 - z_2| \geq n^{-1}}} \frac{2}{|z_1 - z_2|^\alpha} \\
&\leq 8n n^{\alpha - 1} + 2n^\alpha = 10 n^\alpha.
\end{split}
\end{equation}
Note that $R(z) \in \R$. Thus
\begin{equation}
\lVert e^{i n R(z)} \rVert_\alpha \leq \lVert e^{i n R(z)} \rVert_\infty + 10 n^\alpha = 1 + 10 n^\alpha \leq 11 n^\alpha.
\end{equation}
\end{proof}

\begin{lemma}
\label{intByParts}
If $n > 0$, $g \in C(\wbar{\Omega})$, $g_{|\partial\Omega} = 0$, $z_0 \in \Omega \setminus \supp g$ and $\wbar{\partial} g \in L^p(\Omega)$ or $\partial g \in L^p(\Omega)$, $2<p\leq\infty$, then respectively
\begin{equation}
\begin{split}
\Ca\big( e^{\pm inR} g \big) &= \frac{\pm 1}{2in} \Big( e^{\pm inR} \frac{g}{\wbar{z}-\wbar{z_0}} - \Ca\big( e^{\pm inR} \wbar{\partial} \frac{g}{\wbar{z}-\wbar{z_0}} \big) \Big) \quad \text{or} \\
\wbar{\Ca}\big( e^{\pm inR} g \big) &= \frac{\pm 1}{2in} \Big( e^{\pm inR} \frac{g}{z-z_0} - \wbar{\Ca}\big( e^{\pm inR} \partial \frac{g}{z-z_0} \big) \Big).
\end{split}
\end{equation}
\end{lemma}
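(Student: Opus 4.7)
The identity is an integration by parts in disguise: I intend to move one factor of $e^{\pm inR}$ past the Cauchy operator by rewriting it as $\wbar{\partial}$ of something, and then use property 2 of Theorem~\ref{cauchyOPbasics} to cancel $\Ca\wbar{\partial}$.

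The starting point is the computation $\wbar{\partial}(e^{\pm inR}) = \pm 2in(\wbar{z}-\wbar{z_0})\,e^{\pm inR}$, which follows immediately from $\wbar{\partial}R = 2(\wbar{z}-\wbar{z_0})$. Multiplying through by $g/(\wbar{z}-\wbar{z_0})$ (which is legitimate because $z_0\notin\supp g$, so the factor $(\wbar{z}-\wbar{z_0})^{-1}$ is bounded on $\supp g$) and using the Leibniz rule, I get
\begin{equation*}
\wbar{\partial}\!\left(e^{\pm inR}\,\frac{g}{\wbar{z}-\wbar{z_0}}\right) = \pm 2in\,e^{\pm inR}g + e^{\pm inR}\,\wbar{\partial}\frac{g}{\wbar{z}-\wbar{z_0}},
\end{equation*}
so that
\begin{equation*}
e^{\pm inR}g = \frac{\pm 1}{2in}\left(\wbar{\partial}\!\left(e^{\pm inR}\,\frac{g}{\wbar{z}-\wbar{z_0}}\right) - e^{\pm inR}\,\wbar{\partial}\frac{g}{\wbar{z}-\wbar{z_0}}\right).
\end{equation*}
Now I apply the Cauchy operator $\Ca$ to both sides. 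If property 2 of Theorem~\ref{cauchyOPbasics} applies to $F = e^{\pm inR}\,g/(\wbar{z}-\wbar{z_0})$, then $\Ca\wbar{\partial} F = F$ and the first claim drops out.

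The only real thing to check is that this $F$ satisfies the three hypotheses of property 2. Continuity on $\wbar{\Omega}$ holds because $g/(\wbar{z}-\wbar{z_0})$ equals $g(z)/(\wbar{z}-\wbar{z_0})$ on $\supp g$ (where the denominator is bounded away from zero, as $d(z_0,\supp g)>0$) and extends continuously by zero outside $\supp g$; the exponential is smooth. The boundary vanishing $F|_{\partial\Omega}=0$ follows from $g|_{\partial\Omega}=0$. For $\wbar{\partial} F\in L^p(\Omega)$, the product rule gives two terms: $\pm 2in\,e^{\pm inR}g$, which is bounded, and $e^{\pm inR}\bigl(\wbar{\partial} g/(\wbar{z}-\wbar{z_0}) - g/(\wbar{z}-\wbar{z_0})^2\bigr)$, where the first piece is in $L^p$ since $\wbar{\partial} g\in L^p$ by hypothesis and $(\wbar{z}-\wbar{z_0})^{-1}$ is bounded on $\supp g$, while the second piece is in $L^\infty\subset L^p$.

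The second formula is obtained by the conjugate computation: use $\partial(e^{\pm inR}) = \pm 2in(z-z_0)\,e^{\pm inR}$, the Leibniz rule applied to $e^{\pm inR}\,g/(z-z_0)$, and the identity $\wbar{\Ca}\partial f = f$ from property 2. No new difficulty appears. I do not expect a genuine obstacle here; the only subtle point is keeping track of the hypothesis $z_0\notin\supp g$, which is precisely what prevents the auxiliary function $g/(\wbar{z}-\wbar{z_0})$ (or $g/(z-z_0)$) from being singular and thereby lets Theorem~\ref{cauchyOPbasics}.2 be invoked.
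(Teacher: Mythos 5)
Your proof is correct and follows essentially the same route as the paper's: compute $\wbar{\partial}\bigl(e^{\pm inR}\,g/(\wbar{z}-\wbar{z_0})\bigr)$ by the Leibniz rule using $z_0\notin\supp g$, verify the hypotheses of Theorem~\ref{cauchyOPbasics}(2), and apply $\Ca$ to invert $\wbar{\partial}$. You even spell out the $L^p$ verification of $\wbar{\partial}F$ a bit more explicitly than the paper does, but the argument is the same.
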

\begin{proof}
Let's prove the case $\wbar{\partial}g \in L^p(\Omega)$. Because $z_0 \notin \supp g$ we may differentiate weakly to get
\begin{equation}
\wbar{\partial} \big( e^{\pm inR} \frac{g}{\wbar{z}-\wbar{z_0}} \big) = \pm 2in e^{\pm inR} g + e^{\pm inR} \wbar{\partial} \frac{g}{\wbar{z}-\wbar{z_0}}.
\end{equation}
Because $z_0 \notin \supp g$, $g\in C(\wbar{\Omega})$ and $\wbar{\partial}g \in L^p(\Omega)$ the last two terms are in $L^p(\Omega)$, and thus also the first one. We may use the Cauchy operator. Note that $e^{\pm inR}g/(\wbar{z}-\wbar{z_0})$ is continuous and vanishes at the boundary. We get the formula
\begin{equation}
\Ca\big( e^{\pm inR} g \big) = \frac{\pm 1}{2in} \Big( e^{\pm inR} \frac{g}{\wbar{z}-\wbar{z_0}} - \Ca\big( e^{\pm inR} \wbar{\partial} \frac{g}{\wbar{z}-\wbar{z_0}} \big) \Big)
\end{equation}
by property 2 in \ref{cauchyOPbasics}. The case $\partial g \in L^p(\Omega)$ can be proven similarly.
\end{proof}

Our next task is to prove that the integral operator of the fixed point equation \eqref{fixedPointEqn} is a contraction in $C^\alpha(\wbar{\Omega})$, where $\alpha = 1 -\frac{2}{p}$. The next lemma is the non-trivial part of the proof. It is basically an integration by parts argument combined with the use of the cut-off function we constructed earlier.

\begin{lemma}
\label{lemma1}
Let $2<p<\infty$, $\alpha = 1- \frac{2}{p}$, $z_0\in\Omega$ and $n > 1$. If $g \in C^\alpha(\wbar{\Omega})$ and $\wbar{\partial} g \in L^p(\Omega)$ or $\partial g \in L^p(\Omega)$, then respectively
\begin{equation}
\begin{split}
\big\lVert \Ca( e^{-inR} g) \big\rVert_\alpha& \leq 400(C_\alpha+11) n^{-\frac{2}{p(2p+1)}} \big( \lVert g \rVert_\alpha + \lVert \wbar{\partial} g \rVert_p \big), \text{ or} \\
\big\lVert \wbar{\Ca}( e^{-inR} g) \big\rVert_\alpha &\leq 400(C_\alpha+11) n^{-\frac{2}{p(2p+1)}} \big( \lVert g \rVert_\alpha + \lVert \partial g \rVert_p \big).
\end{split}
\end{equation}
\end{lemma}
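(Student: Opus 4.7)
The plan is to use the cutoff function $h$ from Lemma \ref{testFexist} with a parameter $\delta \in (0,1)$ to be optimized, splitting
\begin{equation*}
\Ca(e^{-inR}g) = \Ca\bigl(e^{-inR}(1-h)g\bigr) + \Ca(e^{-inR}hg),
\end{equation*}
and to handle the two pieces by completely different mechanisms. The term $(1-h)g$ is supported in the set $\mho$ of measure at most $2\pi\delta$, so it is small by a direct $L^p$ argument. The term $hg$ vanishes near $z_0$ and on $\partial\Omega$, which is exactly the setting of Lemma \ref{intByParts} and allows an integration by parts that gains a factor $1/n$ at the cost of a factor $1/\delta$ from the weight $(\wbar{z}-\wbar{z_0})^{-1}$.

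For the first piece, property 5 of Theorem \ref{cauchyOPbasics} together with $|e^{-inR}|=1$ and $\lVert g\rVert_\infty\leq\lVert g\rVert_\alpha$ immediately yields
\begin{equation*}
\lVert \Ca(e^{-inR}(1-h)g) \rVert_\alpha \leq C_\alpha \lVert (1-h)g \rVert_p \leq C_\alpha (2\pi\delta)^{1/p} \lVert g \rVert_\alpha.
\end{equation*}
For the second piece, Lemma \ref{intByParts} rewrites it as
\begin{equation*}
\Ca(e^{-inR}hg) = \frac{-1}{2in}\Big( e^{-inR}\frac{hg}{\wbar{z}-\wbar{z_0}} - \Ca\Big( e^{-inR}\wbar{\partial}\frac{hg}{\wbar{z}-\wbar{z_0}} \Big) \Big).
\end{equation*}
The boundary term I would bound via the H\"older product rule $\lVert fh\rVert_\alpha\leq\lVert f\rVert_\alpha\lVert h\rVert_\alpha$, the estimate $\lVert e^{-inR}\rVert_\alpha\leq 11n^\alpha$ from Lemma \ref{expHolder}, and the embedding $C^1(\wbar\Omega)\hookrightarrow C^\alpha(\wbar\Omega)$ applied to $\lVert h/(\wbar{z}-\wbar{z_0})\rVert_{C^1(\wbar\Omega)}\leq 200/\delta^2$ from Lemma \ref{testFexist}; this produces a contribution of order $n^{\alpha-1}\delta^{-2}\lVert g\rVert_\alpha$. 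For the $\Ca$-term I would apply property 5 of Theorem \ref{cauchyOPbasics} once more. Since $hg$ vanishes near $z_0$, the distributional identity $\wbar{\partial}(\wbar{z}-\wbar{z_0})^{-1}=\pi\delta_{z_0}$ contributes nothing, giving $\wbar{\partial}\bigl(hg/(\wbar{z}-\wbar{z_0})\bigr)=\bigl((\wbar{\partial}h)g+h\wbar{\partial}g\bigr)/(\wbar{z}-\wbar{z_0})$. The first summand lives on $\mho\setminus B(z_0,\delta/2)$ where $|\wbar{z}-\wbar{z_0}|^{-1}\leq 2/\delta$ and $m(\mho)\leq 2\pi\delta$, producing an $L^p$ bound of order $\delta^{1/p-2}\lVert g\rVert_\alpha$; the second is dominated by $(2/\delta)\lVert\wbar{\partial}g\rVert_p$. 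After dividing by $2n$, the integration-by-parts piece contributes at most $n^{-1}\delta^{-2}\bigl(\lVert g\rVert_\alpha+\lVert\wbar{\partial}g\rVert_p\bigr)$.

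Summing, the total bound is of the form $\delta^{1/p}\lVert g\rVert_\alpha+n^{\alpha-1}\delta^{-2}\bigl(\lVert g\rVert_\alpha+\lVert\wbar{\partial}g\rVert_p\bigr)$ up to absolute constants; balancing by $\delta^{1/p}=n^{\alpha-1}\delta^{-2}$ with $\alpha-1=-2/p$ gives $\delta=n^{-2/(2p+1)}\in(0,1)$ and $\delta^{1/p}=n^{-2/(p(2p+1))}$, which is exactly the exponent in the statement. The case $\partial g\in L^p(\Omega)$ is entirely symmetric after swapping $\Ca\leftrightarrow\wbar{\Ca}$, $\wbar{\partial}\leftrightarrow\partial$, and $h/(\wbar{z}-\wbar{z_0})\leftrightarrow h/(z-z_0)$. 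The one delicate point is the distributional identity $\wbar{\partial}\bigl(hg/(\wbar{z}-\wbar{z_0})\bigr)=\wbar{\partial}(hg)/(\wbar{z}-\wbar{z_0})$, which is valid precisely because $hg$ vanishes in a neighborhood of $z_0$; this is the entire reason for introducing the cutoff $h$. The remaining difficulty is only bookkeeping: absorbing lower-order $\delta^{1/p-2}$ into $\delta^{-2}$ via $\delta<1$, and tracking the H\"older product and $C^1\hookrightarrow C^\alpha$ constants so as to land on the stated $400(C_\alpha+11)$.
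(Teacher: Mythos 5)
Your overall strategy is exactly the paper's: introduce the cutoff $h$ from Lemma \ref{testFexist}, split $\Ca(e^{-inR}g)$ into the $(1-h)g$ and $hg$ pieces, hit the small-support piece with the $L^p\to C^\alpha$ bound for $\Ca$, integrate the $hg$ piece by parts via Lemma \ref{intByParts}, and balance $\delta = n^{-2/(2p+1)}$. That is precisely what the paper does, including the choice of auxiliary lemmas.

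There is, however, a concrete error in the product-rule step. You claim
\begin{equation*}
\wbar{\partial}\Big(\frac{hg}{\wbar{z}-\wbar{z_0}}\Big)=\frac{(\wbar{\partial}h)g+h\,\wbar{\partial}g}{\wbar{z}-\wbar{z_0}},
\end{equation*}
justifying the absence of further terms by saying that ``$\wbar{\partial}(\wbar{z}-\wbar{z_0})^{-1}=\pi\delta_{z_0}$ contributes nothing since $hg$ vanishes near $z_0$.'' Both halves of that sentence are off. The delta function appears in $\partial(\wbar{z}-\wbar{z_0})^{-1}=\pi\delta_{z_0}$, not in $\wbar{\partial}(\wbar{z}-\wbar{z_0})^{-1}$; and classically $\wbar{\partial}(\wbar{z}-\wbar{z_0})^{-1}=-(\wbar{z}-\wbar{z_0})^{-2}$ away from $z_0$. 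So the correct identity on $\supp(hg)$ is
\begin{equation*}
\wbar{\partial}\Big(\frac{hg}{\wbar{z}-\wbar{z_0}}\Big)=\frac{(\wbar{\partial}h)g+h\,\wbar{\partial}g}{\wbar{z}-\wbar{z_0}}-\frac{hg}{(\wbar{z}-\wbar{z_0})^2},
\end{equation*}
and your decomposition drops the last term. The role of the cutoff $h$ here is not to kill a delta (there is none for $\wbar\partial$), but to remove the nonintegrable singularity $|\wbar{z}-\wbar{z_0}|^{-2}$ near $z_0$ and to make $hg$ vanish on $\partial\Omega$ so Lemma \ref{intByParts} applies. Fortunately the dropped term is of the same order: on $\supp h$ one has $|\wbar{z}-\wbar{z_0}|^{-2}\leq 4/\delta^2$, so it contributes $O(\delta^{-2}\lVert g\rVert_\alpha)$ to the $L^p$ norm, exactly like the $(\wbar{\partial}h)g$ piece, and the balancing $\delta=n^{-2/(2p+1)}$ is unchanged. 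The paper sidesteps this slip entirely by grouping $\wbar{\partial}\big(\tfrac{h}{\wbar{z}-\wbar{z_0}}\big)g + \tfrac{h}{\wbar{z}-\wbar{z_0}}\wbar{\partial}g$ and estimating $\lVert\wbar{\partial}(h/(\wbar{z}-\wbar{z_0}))\rVert_\infty$ via the $C^1$ bound $200/\delta^2$ of Lemma \ref{testFexist}, which automatically absorbs the $-h/(\wbar{z}-\wbar{z_0})^2$ contribution. Once you restore that term (or group as the paper does), the argument closes and lands on the stated bound.
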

\begin{proof}
Let us prove the case $\wbar{\partial} g \in L^p(\Omega)$ first. Let $0 < \delta < 1$ and take $h$ as given by lemma \ref{testFexist}. Now $z_0 \notin \supp(hg)$, $hg \in C(\wbar{\Omega})$, it vanishes on $\partial \Omega$ and $\wbar{\partial}(hg) \in L^p(\Omega)$ so by lemma \ref{intByParts} we can integrate by parts to get
\begin{equation}
\begin{split}
\Ca&\big( e^{-inR} g\big) = \Ca\big(e^{inR} (1-h)g \big) + \Ca\big( e^{-inR} h g \big) = \Ca\big( e^{-inR}(1-h)g \big) \\
&- \frac{1}{2in}\Big( e^{-inR} \frac{h}{\wbar{z}-\wbar{z_0}}g - \Ca\big(e^{-inR}\wbar{\partial}\big(\frac{h}{\wbar{z}-\wbar{z_0}}\big)g\big) - \Ca\big(e^{-inR} \frac{h}{\wbar{z}-\wbar{z_0}} \wbar{\partial} g \big)\Big).
\end{split}
\end{equation}
\begin{tabular}{m{7.9cm}c}
Next we will use the inequalities on the right and the fact that $\lVert e^{-inR} \rVert_\alpha \leq 11n^\alpha$. The terms involving $h$ are estimated using lemma \ref{testFexist}. Note that $n^{-1} < n^{-2/p}$ and $\delta^{-1} < \delta^{-2}$ because $p>2$ and $\delta<1$, so
& 
\framebox[5.1cm][l]{$\begin{array}{l}
\lVert \Ca f \rVert_\alpha \leq C_\alpha \lVert f \rVert_p \\
\lVert f \rVert_{L^p(S)} \leq (m(S))^{1/p} \lVert f \rVert_\alpha \\
\lVert ab \rVert_\alpha \leq \lVert a \rVert_\alpha \lVert b \rVert_\alpha \\
\lVert a \rVert_\alpha \leq 2 \lVert a \rVert_{C^1(\wbar{\Omega})}
\end{array}$}
\end{tabular}
\begin{equation}
\begin{split}
\big\lVert \Ca&( e^{-inR} g) \big\rVert_\alpha \leq C_\alpha \lVert (1-h)g \rVert_p + \frac{1}{2n}\Big( \lVert e^{-inR} \rVert_\alpha \big\lVert \frac{h}{\wbar{z}-\wbar{z_0}} \big\rVert_\alpha \lVert g \rVert_\alpha \\
&\phantom{\leq} + C_\alpha \big\lVert \wbar{\partial} \frac{h}{\wbar{z}-\wbar{z_0}} \big\rVert_\infty \lVert g \rVert_p + C_\alpha \big\lVert \frac{h}{\wbar{z}-\wbar{z_0}} \big\rVert_\infty \lVert \wbar{\partial} g \rVert_p \Big) \\
&\leq C_\alpha (m(\mho))^{1/p} \lVert g \rVert_\alpha + \frac{1}{2n} \Big( 11 n^\alpha \cdot 2 \cdot \frac{200}{\delta^2} \lVert g \rVert_\alpha \\
&\phantom{\leq} + C_\alpha\frac{200}{\delta^2} \pi^{1/p} \lVert g \rVert_\alpha + C_\alpha \frac{2}{\delta} \lVert \wbar{\partial} g \rVert_p \Big) \\
&\leq C_\alpha(2\pi)^{1/p} \delta^{1/p} \lVert g \rVert_\alpha +200 (C_\alpha+11) \delta^{-2}n^{-2/p} \big( \lVert g \rVert_\alpha + \lVert \wbar{\partial} g \rVert_p \big) \\
&\leq 200(C_\alpha+11) (\delta^{1/p} + \delta^{-2}n^{-2/p}) \big( \lVert g \rVert_\alpha + \lVert \wbar{\partial} g \rVert_p \big).
\end{split}
\end{equation}
Because $n > 1$ we may choose $\delta = n^{-\frac{2}{2p+1}}$ and get
\begin{equation}
\big\lVert \Ca( e^{-inR} g) \big\rVert_\alpha \leq 400(C_\alpha+11) n^{-\frac{2}{p(2p+1)}} \big( \lVert g \rVert_\alpha + \lVert \wbar{\partial} g \rVert_p \big).
\end{equation}
The estimate for the case $\partial g \in L^p(\Omega)$ is deduced similarly.
\end{proof}

\begin{theorem}
\label{operatorNorm}
Let $2 < p < \infty$, $\alpha = 1-\frac{2}{p}$, $z_0\in\Omega$, $n > 1$ and $q_1, q_2 \in L^p(\Omega)$. Then for $f \in C^\alpha(\wbar{\Omega})$ we have
\begin{equation}
\big\lVert \Ca\big( e^{-inR} \wbar{\Ca}( e^{inR} q_1 f) \big) \big\rVert_\alpha, \big\lVert \wbar{\Ca}\big( e^{-inR} \Ca( e^{inR} q_2 f) \big) \big\rVert_\alpha \leq C n^{-\frac{2}{p(2p+1)}} \lVert f \rVert_\infty \, ,
\end{equation}
where $C = 400(C_\alpha+11)(C_\alpha+B_p)\max(\lVert q_1 \rVert_p, \lVert q_2 \rVert_p)$.
\end{theorem}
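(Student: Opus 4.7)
The plan is to reduce both estimates to a single application of Lemma~\ref{lemma1} to the outer Cauchy operator, with the inner Cauchy operator producing a function whose $C^\alpha$ norm and $L^p$ norm of the relevant derivative are controlled by $\lVert q_i\rVert_p\,\lVert f\rVert_\infty$. The hard analytic work (the integration by parts with the cut-off function, the optimization $\delta=n^{-2/(2p+1)}$) is already packaged into Lemma~\ref{lemma1}, so what remains is essentially bookkeeping.

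Concretely, for the first term set $g := \wbar{\Ca}(e^{inR}q_1 f)$. Since $|e^{inR}|=1$ (because $R$ is real) and $f\in C^\alpha(\wbar\Omega)\subset L^\infty(\wbar\Omega)$, the product $e^{inR}q_1 f$ lies in $L^p(\Omega)$ with $\lVert e^{inR}q_1 f\rVert_p\le\lVert q_1\rVert_p\lVert f\rVert_\infty$. Then property~4 of Theorem~\ref{cauchyOPbasics} yields $g\in C^\alpha(\wbar\Omega)$ with $\lVert g\rVert_\alpha\le C_\alpha\lVert q_1\rVert_p\lVert f\rVert_\infty$, while property~3 gives $\wbar\partial g=\wbar\Pi(e^{inR}q_1 f)\in L^p(\Omega)$ and property~5 yields $\lVert\wbar\partial g\rVert_p\le B_p\lVert q_1\rVert_p\lVert f\rVert_\infty$. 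The hypotheses of Lemma~\ref{lemma1} for $\Ca(e^{-inR}g)$ are thus satisfied, and applying it gives
\begin{equation}
\big\lVert \Ca(e^{-inR} g)\big\rVert_\alpha
\le 400(C_\alpha+11)n^{-\frac{2}{p(2p+1)}}\bigl(\lVert g\rVert_\alpha+\lVert\wbar\partial g\rVert_p\bigr)
\le 400(C_\alpha+11)(C_\alpha+B_p)\lVert q_1\rVert_p\lVert f\rVert_\infty\,n^{-\frac{2}{p(2p+1)}},
\end{equation}
which is the desired bound.

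The second term is handled by an entirely symmetric argument: set $g := \Ca(e^{inR}q_2 f)$, apply property~4 of Theorem~\ref{cauchyOPbasics} for $\Ca$ and the identity $\partial\Ca=\Pi$ from property~3 together with the $L^p$-boundedness of $\Pi$ from property~5 to bound $\lVert g\rVert_\alpha$ and $\lVert\partial g\rVert_p$ by $(C_\alpha+B_p)\lVert q_2\rVert_p\lVert f\rVert_\infty$, then invoke the corresponding branch of Lemma~\ref{lemma1} (the ``$\partial g\in L^p$'' alternative) for $\wbar\Ca(e^{-inR}g)$.

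There is really no obstacle beyond verifying these hypotheses; the factor $\max(\lVert q_1\rVert_p,\lVert q_2\rVert_p)$ in the constant $C$ simply absorbs the two cases into one common statement. The only subtlety worth explicitly flagging is that the step uses $\lVert f\rVert_\infty$ rather than $\lVert f\rVert_\alpha$, which is fine since the inner $L^p$-integration against $q_i$ only needs the sup norm of $f$; this is exactly what permits using the resulting estimate later inside the contraction argument for the fixed-point equation \eqref{fixedPointEqn}.
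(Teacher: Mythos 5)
Your proof is correct and coincides in substance with the paper's own: both set $g$ to be the inner Cauchy transform, verify $g\in C^\alpha(\wbar\Omega)$ with $\wbar\partial g$ (resp.\ $\partial g$) in $L^p$ via properties 3--5 of Theorem~\ref{cauchyOPbasics}, and then invoke Lemma~\ref{lemma1} to conclude. The only difference is that you spell out the intermediate estimates slightly more explicitly.
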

\begin{proof}
By the properties of the Cauchy and Beurling operators (theorem \ref{cauchyOPbasics}) we know that $\wbar{\Ca}(e^{inR}q_1f)$ and $\Ca(e^{inR}q_2f)$ are in $C^\alpha(\wbar{\Omega})$. Moreover $\wbar{\partial} \wbar{\Ca}(e^{inR}q_1f) = \wbar{\Pi}(e^{inR}q_1f) \in L^p(\Omega)$ and $\partial \Ca(e^{inR}q_2f) = \Pi(e^{inR}q_2f) \in L^p(\Omega)$. Thus we may use lemma \ref{lemma1} and get
\begin{equation}
\begin{split}
&\big\lVert \Ca\big( e^{-inR} \wbar{\Ca}( e^{inR} q_1 f) \big) \big\rVert_\alpha \leq 400(C_\alpha+11) n^{-\frac{2}{p(2p+1)}} \big( \big\lVert \wbar{\Ca}(e^{inR}q_1 f) \big\rVert_\alpha \\
&\phantom{\leq} + \big\lVert \wbar{\Pi}( e^{inR} q_1 f) \big\rVert_p \big) \leq 400(C_\alpha+11) n^{-\frac{2}{p(2p+1)}} (C_\alpha +B_p) \lVert q_1 \rVert_p \lVert f \rVert_\infty.
\end{split}
\end{equation}
The estimate for $\wbar{\Ca}\big( e^{-inR} \Ca(e^{inR} q_2 f) \big)$ is deduced similarly.
\end{proof}

\bigskip
Now we define the operator appearing in the right-hand side of the integral equation which defines the oscillating solutions to the Schr\"odinger equation.

\begin{definition}
\label{SSbarDefinitions}
Let $2<p<\infty$, $\alpha = 1-\frac{2}{p}$ and $q_1, q_2 \in L^p(\Omega)$ be the two potentials that we wish to prove equal. Let $z_0 \in \Omega$ and $n > \max\Big(1, \big(\frac{1}{2}C\big)^{\frac{p(2p+1)}{2}}\Big) = n_0$. We define the operators $S_{n,z_0}, \wbar{S}_{n,z_0}$ mapping $C^\alpha(\wbar{\Omega})$ to itself by
\begin{equation}
\begin{split}
S_{n,z_0} f  = -\frac{1}{4} \Ca \big( e^{-inR} \wbar{\Ca} (e^{inR} q_1 f) \big),\\
\wbar{S}_{n,z_0} f = -\frac{1}{4} \wbar{\Ca} \big( e^{-inR} \Ca ( e^{inR} q_2 f ) \big).
\end{split}
\end{equation}
\end{definition}
By theorem \ref{operatorNorm} and the inequality $\lVert f \rVert_\infty \leq \lVert f \rVert_\alpha$ we have the norm estimates $\lVert S_{n,z_0} \rVert_\alpha, \lVert \wbar{S}_{n,z_0} \rVert_\alpha \leq C n^{-\frac{2}{p(2p+1)}} < \frac{1}{2}$. The operators are thus contractions. It is also easily seen that the operators $g \mapsto \wbar{\Ca}\big( e^{inR}q_j g \big)$, $g \mapsto \Ca\big( e^{-inR}q_j g \big)$ from $L^\infty(\Omega)$ to $C^\alpha(\wbar{\Omega})$ are uniformly Lipschitz\footnote{We mean that the operator norm of the difference is bounded by the difference of their respective values of $z_0$} with respect to $z_0$. This implies the same property for $S_{n,z_0}$ and $\wbar{S}_{n,z_0}$.

\medskip
Our next goal is to solve the integral equations $f = 1 + S f$ and $g = 1 + \wbar{S}g$ and get some estimate for the remainder terms $Sf$ and $\wbar{S}g$. In the end we will also prove that using these $f$ and $g$ we get the right kind of solutions to the original equations $\Delta u + q_j u = 0$. The next lemma shows that although we looked for solutions in a H\"older space, the solutions are in fact smoother.

\begin{lemma}
\label{intEqn}
If $2<p<\infty$, $\alpha = 1-\frac{2}{p}$, $z_0 \in \Omega$ and $n > n_0$ the equations
\begin{equation}
\label{intEqnEqn}
\begin{split}
f = 1 + S_{n,z_0} f\\
g = 1 + \wbar{S}_{n,z_0} g
\end{split}
\end{equation}
have a unique solution in $C^\alpha(\wbar{\Omega})$ which depends continuously on $z_0$. Moreover these functions are in $W^{2,p}(\Omega)$.
\end{lemma}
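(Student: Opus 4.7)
\emph{Existence and uniqueness.} The statement splits into three parts, handled in turn. For the first, by the bound immediately following Definition \ref{SSbarDefinitions}, the operators $S_{n,z_0}$ and $\wbar{S}_{n,z_0}$ on $C^\alpha(\wbar{\Omega})$ have operator norm at most $Cn^{-2/(p(2p+1))} < \tfrac{1}{2}$ for $n>n_0$. The affine maps $f\mapsto 1+S_{n,z_0}f$ and $g\mapsto 1+\wbar{S}_{n,z_0}g$ are therefore strict contractions on the complete space $C^\alpha(\wbar{\Omega})$, and Banach's fixed point theorem delivers the unique fixed points $f,g\in C^\alpha(\wbar{\Omega})$.

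\emph{Continuous dependence on $z_0$.} Combining the uniform contraction constant with the uniform Lipschitz dependence of $z_0\mapsto S_{n,z_0}$ in operator norm (remarked after Definition \ref{SSbarDefinitions}), I write
\[
f_{z_0}-f_{z_0'} \;=\; (S_{n,z_0}-S_{n,z_0'})f_{z_0} \;+\; S_{n,z_0'}(f_{z_0}-f_{z_0'})
\]
and rearrange using $\lVert S_{n,z_0'}\rVert_\alpha<\tfrac{1}{2}$ to obtain $\lVert f_{z_0}-f_{z_0'}\rVert_\alpha\le 2\lVert(S_{n,z_0}-S_{n,z_0'})f_{z_0}\rVert_\alpha = O(|z_0-z_0'|)$. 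The estimate for $g$ is identical.

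\emph{$W^{2,p}$ regularity---the main obstacle.} Setting $F = -\tfrac{1}{4}e^{-inR}\wbar{\Ca}(e^{inR}q_1 f)$ recasts the fixed-point equation as $f = 1 + \Ca F$. Applying Leibniz together with properties 1 and 3 of Theorem \ref{cauchyOPbasics} yields the weak derivatives
\begin{align*}
\wbar{\partial}f &= F, \qquad \partial f = \Pi F, \\
\partial\wbar{\partial}f &= \partial F = -\tfrac{1}{4}\bigl(-2in(z-z_0)e^{-inR}\wbar{\Ca}(e^{inR}q_1 f) + q_1 f\bigr), \\
\wbar{\partial}^2 f &= \wbar{\partial}F = -\tfrac{1}{4}\bigl(-2in(\wbar{z}-\wbar{z_0})e^{-inR}\wbar{\Ca}(e^{inR}q_1 f) + e^{-inR}\wbar{\Pi}(e^{inR}q_1 f)\bigr).
\end{align*}
Since $f\in L^\infty$, $q_1\in L^p$, $\wbar{\Ca}$ maps $L^p$ into $L^\infty$ (property 4) and $\wbar{\Pi}$ is $L^p$-bounded (property 5), each of these is a sum of $L^\infty$ functions multiplied by $L^p$ factors, and therefore lies in $L^p(\Omega)$. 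The one remaining bound $\partial^2 f = \partial\Pi F \in L^p(\Omega)$ is the hard point: the Beurling kernel is defined only as a principal value, so one cannot differentiate under the integral naively. The plan is to integrate by parts via $(z-\xi)^{-2} = \partial_\xi(z-\xi)^{-1}$, rewriting $\Pi F = \Ca(\partial F) + \mathcal{B}F$ with $\mathcal{B}F$ a boundary Cauchy integral, and then differentiate once more: the interior piece produces $\Pi(\partial F)\in L^p$ because $\partial F\in L^p$ (shown above) and $\Pi$ is $L^p$-bounded, while the derivative of the boundary piece lies in $L^p(\Omega)$ because $F\in W^{1,p}(\Omega)\subset C^\alpha(\wbar{\Omega})$ provides a H\"older boundary trace on the smooth curve $\partial\Omega$. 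The computation for $g$ is identical after swapping $q_1\leftrightarrow q_2$ and $S_{n,z_0}\leftrightarrow\wbar{S}_{n,z_0}$.
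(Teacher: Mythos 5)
\emph{Existence, uniqueness, continuity.} This part is fine and essentially matches the paper: existence and uniqueness come from Banach's fixed point theorem once the contraction bound $\lVert S_{n,z_0}\rVert_\alpha < \tfrac{1}{2}$ is in hand, and the continuity in $z_0$ is the standard ``uniform contraction + Lipschitz dependence of the operator'' argument. Your write-up of the continuity step is a little more explicit than the paper's, which just invokes Banach's theorem, but the content is the same.

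\emph{$W^{2,p}$ regularity.} Here you identify the genuinely delicate point --- the mixed derivative $\partial^2 f = \partial\Pi F$ --- and you are right that one cannot simply commute $\partial$ past the Beurling operator on the bounded domain: doing so, as the paper's displayed list of derivatives implicitly does, drops the holomorphic correction coming from the zero-extension across $\partial\Omega$. Your integration-by-parts decomposition $\Pi F = \Ca(\partial F) + \mathcal{B}F$ is the right move, and the interior piece $\Pi(\partial F)\in L^p$ is fine. However, the final justification that ``the derivative of the boundary piece lies in $L^p(\Omega)$ because $F\in W^{1,p}(\Omega)\subset C^\alpha(\wbar\Omega)$ provides a H\"older boundary trace'' does not hold up quantitatively. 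With only a $C^\alpha(\partial\Omega)$ trace, $\alpha = 1-\tfrac{2}{p}$, the sharp bound on the derivative of the boundary Cauchy integral is
\begin{equation*}
|\partial\mathcal{B}F(z)| \;\lesssim\; \operatorname{dist}(z,\partial\Omega)^{\alpha-1},
\end{equation*}
and since $p(\alpha-1) = -2$, the right-hand side is not in $L^p(\Omega)$ --- it is in $L^q(\Omega)$ only for $q < p/2$. So H\"older regularity of the trace is genuinely insufficient here; the embedding $W^{1,p}(\Omega)\subset C^\alpha(\wbar\Omega)$ throws away exactly the regularity you need. To close the gap one should instead use the full trace theorem: $F\in W^{1,p}(\Omega)$ has boundary trace in the fractional Sobolev space $W^{1-1/p,p}(\partial\Omega)$, and the Cauchy integral maps $W^{1-1/p,p}(\partial\Omega)$ into $W^{1,p}(\Omega)$ (one way to see this: Plemelj's formula shows the boundary values of $\mathcal{B}F$ remain in $W^{1-1/p,p}(\partial\Omega)$ since the boundary Hilbert transform is bounded there, and a holomorphic function with Dirichlet data in $W^{1-1/p,p}(\partial\Omega)$ lies in $W^{1,p}(\Omega)$ by elliptic regularity). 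That is a strictly stronger input than the $C^\alpha$ trace, and it is the input your argument actually needs.
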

\begin{proof}
The operators $1 + S_{n,z_0}$ and $1 + \wbar{S}_{n,z_0}$ are contractions in $C^\alpha(\wbar{\Omega})$ that depend continuously on $z_0$. By Banach's fixed point theorem \eqref{intEqnEqn} has a unique solution depending continuously on $z_0$.

Next we prove that $f \in W^{2,p}(\Omega)$. The proof for $g$ is similar. We have the equalities
\begin{equation}
\begin{split}
f &= 1 - \frac{1}{4}\Ca\big(e^{-inR}\wbar{\Ca}(e^{inR}q_1 f)\big),\\
\wbar{\partial} f &= -\frac{1}{4} e^{-inR}\wbar{\Ca}(e^{inR}q_1 f),\\
\partial f &= -\frac{1}{4} \Pi\big( e^{-inR} \wbar{\Ca}(e^{inR} q_1 f)\big),\\
\partial \wbar{\partial} f &= \frac{1}{2}in(z-z_0)e^{-inR}\wbar{\Ca}(e^{inR}q_1 f) - \frac{1}{4}q_1 f,\\
\wbar{\partial} \wbar{\partial} f &= \frac{1}{2}in(\wbar{z}-\wbar{z_0})e^{-inR}\wbar{\Ca}(e^{inR}q_1 f) - \frac{1}{4}e^{-inR}\wbar{\Pi}(e^{inR}q_1 f),\\
\partial\partial f &= \frac{1}{2} \Pi\big(in(z-z_0)e^{-inR}\wbar{\Ca}(e^{-inR}q_1 f)\big) - \frac{1}{4}\Pi(q_1 f),\\
\wbar{\partial}\partial f &= \frac{1}{2} \Pi\big( in (\wbar{z}-\wbar{z_0})e^{-inR}\wbar{\Ca}(e^{inR} q_1 f)\big) - \frac{1}{4}\Pi\big(e^{-inR}\wbar{\Pi}(e^{inR} q_1 f )\big).
\end{split}
\end{equation}
Note that $f \in L^\infty(\Omega)$ and so $q_1 f \in L^p(\Omega)$. The operators $\Ca,\wbar{\Ca},\Pi, \wbar{\Pi}$ are bounded in $L^p(\Omega)$, so $\lVert f \rVert_{W^{2,p}(\Omega)} \leq 1+c n \lVert f \rVert_{L^\infty(\Omega)} < \infty$ for some $c \in \R$.
\end{proof}

Next we will prove the most technical lemma. It tells us that the Sobolev $W^{1,p}$ norms of the remainder terms tend to zero when $n$ grows. The convergence is uniform in $z_0$. The proof contains in principle half of the ideas of the proof of lemma 3.4 in \cite{bukhgeim}. The other half is in theorem \ref{OPremainder}. The argumentation is basically the same but here we avoid some redundancy.

\begin{lemma}
\label{restNorms}
Let $2<p<\infty$, $\alpha = 1-\frac{2}{p}$ and $n > n_0$. For $z_0 \in \Omega$ let $f = 1 + S_{n,z_0} f$ and $g = 1 + \wbar{S}_{n,z_0} g$ be the functions given in lemma \ref{intEqn}. Then
\begin{equation}
\begin{array}{ll}
\displaystyle{\sup_{z_0\in\Omega}}\lVert S_{n,z_0} f \rVert_\alpha \longrightarrow 0, &\displaystyle{\sup_{z_0\in\Omega}}\lVert \wbar{S}_{n,z_0} g \rVert_\alpha \longrightarrow 0, \\
\displaystyle{\sup_{z_0\in\Omega}}\lVert \wbar{\partial} S_{n,z_0} f \rVert_\infty \longrightarrow 0, &\displaystyle{\sup_{z_0\in\Omega}}\lVert \partial \wbar{S}_{n,z_0} g \rVert_\infty \longrightarrow 0, \\
\displaystyle{\sup_{z_0\in\Omega}}\lVert \partial S_{n,z_0} f \rVert_p \longrightarrow 0, &\displaystyle{\sup_{z_0\in\Omega}}\lVert \wbar{\partial}\wbar{S}_{n,z_0} g \rVert_p \longrightarrow 0,
\end{array}
\end{equation}
when $n \longrightarrow \infty$.
\end{lemma}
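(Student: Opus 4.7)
The fixed-point equation $f = 1 + S_{n,z_0}f$ together with the operator bound $\lVert S_{n,z_0}\rVert < 1/2$ on $C^\alpha(\wbar{\Omega})$ immediately gives the uniform bound $\lVert f\rVert_\alpha \leq 2$ for all $z_0 \in \Omega$ and $n > n_0$. The first of the six claims, $\sup_{z_0}\lVert S_{n,z_0}f\rVert_\alpha \to 0$, is then immediate from theorem \ref{operatorNorm}, which yields $\lVert S_{n,z_0}f\rVert_\alpha \leq Cn^{-2/(p(2p+1))}\lVert f\rVert_\infty \leq 2Cn^{-2/(p(2p+1))}$.

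For the two derivative norms I would compute explicitly, using theorem \ref{cauchyOPbasics}, the formulas $\wbar{\partial}S_{n,z_0}f = -\frac{1}{4}e^{-inR}\wbar{\Ca}(e^{inR}q_1 f)$ and $\partial S_{n,z_0}f = -\frac{1}{4}\Pi(e^{-inR}\wbar{\Ca}(e^{inR}q_1 f))$. Since $\lvert e^{-inR}\rvert = 1$, $\Pi$ is bounded on $L^p(\Omega)$, and $\lVert \cdot \rVert_{L^p(\Omega)} \leq \pi^{1/p}\lVert \cdot \rVert_\infty$ on the bounded domain $\Omega$, both $\lVert \wbar{\partial}S_{n,z_0}f\rVert_\infty$ and $\lVert \partial S_{n,z_0}f\rVert_p$ are dominated by a constant multiple of $\lVert \wbar{\Ca}(e^{inR}q_1 f)\rVert_\infty$. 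The problem therefore reduces to proving $\sup_{z_0 \in \Omega}\lVert \wbar{\Ca}(e^{inR}q_1 f)\rVert_\infty \to 0$.

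This last convergence is the main obstacle, because lemma \ref{lemma1} cannot be applied to $g = q_1 f$ directly: the potential $q_1$ is only $L^p$, so $\partial(q_1 f)$ need not be in $L^p$. I would proceed by a density argument. Given $\eta > 0$, pick $q^\epsilon \in C^\infty_0(\Omega)$ with $\lVert q_1 - q^\epsilon\rVert_p \leq \epsilon$ and split $\wbar{\Ca}(e^{inR}q_1 f) = \wbar{\Ca}(e^{inR}(q_1 - q^\epsilon)f) + \wbar{\Ca}(e^{inR}q^\epsilon f)$. The first piece has $L^\infty$ norm at most $C_\alpha\lVert q_1 - q^\epsilon\rVert_p\lVert f\rVert_\infty \leq 2C_\alpha \epsilon$ by the continuity $\wbar{\Ca}: L^p \to C^\alpha$, uniformly in $n$ and $z_0$. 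For the smooth piece, $g := q^\epsilon f$ lies in $C^\alpha(\wbar{\Omega})$ with $\partial g = (\partial q^\epsilon) f + q^\epsilon \partial f \in L^p(\Omega)$, and its norm is uniform in $z_0$ and $n$ by the a priori bound $\lVert \partial f\rVert_p \leq \frac{B_p C_p}{2}\lVert q_1\rVert_p$ coming from the $\Pi$-formula for $\partial f$. Applying lemma \ref{lemma1} (after the conjugation identity $\wbar{\Ca}(e^{inR}g) = \overline{\Ca(e^{-inR}\wbar{g})}$, which transforms the hypothesis $\partial g \in L^p$ into $\wbar{\partial}\wbar{g} \in L^p$) gives $\lVert \wbar{\Ca}(e^{inR}q^\epsilon f)\rVert_\infty \leq M(\epsilon)n^{-2/(p(2p+1))}$, where $M(\epsilon)$ depends on $\lVert q^\epsilon\rVert_{W^{1,\infty}}$ but is independent of $z_0$ and $n$. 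Choosing $\epsilon$ small enough that $2C_\alpha\epsilon \leq \eta/2$ and then $n$ large enough that $M(\epsilon)n^{-2/(p(2p+1))} \leq \eta/2$ closes the argument.

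The three statements for $\wbar{S}_{n,z_0}g$ follow by exactly the same argument with the roles of $\partial$ and $\wbar{\partial}$, of $e^{\pm inR}$, and of $q_1$ and $q_2$ all interchanged, since $\wbar{S}_{n,z_0}$ differs from $S_{n,z_0}$ only by the swap of the holomorphic and anti-holomorphic Cauchy operators. Note that the density step produces no explicit rate for the last two pairs of norms; it delivers only qualitative convergence, but this is enough for the theorem.
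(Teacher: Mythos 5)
Your proof is correct and reaches the same conclusion, but the route for the derivative-norm estimates is genuinely different from the paper's. Both arguments start identically: the fixed-point structure plus $\lVert S_{n,z_0}\rVert < \tfrac{1}{2}$ gives $\lVert f\rVert_\alpha \leq 2$ uniformly, and then the first pair of claims follows at once from theorem \ref{operatorNorm}. The divergence is in how the derivative terms are handled. The paper uses lemma \ref{intByParts} directly on a bespoke $z_0$-dependent mollifier $q_{\epsilon,z_0}$ (constructed in lemma \ref{lemmaB3}) that vanishes on a disc around $z_0$ and satisfies uniform bounds on $q_{\epsilon,z_0}/(z-z_0)$ and its derivatives; it then closes the estimate with an explicit $1/n$ decay. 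You instead reduce both $\lVert\wbar{\partial}S_{n,z_0}f\rVert_\infty$ and $\lVert\partial S_{n,z_0}f\rVert_p$ to $\lVert\wbar{\Ca}(e^{inR}q_1 f)\rVert_\infty$, split with a fixed $z_0$-independent mollifier $q^\epsilon$, and invoke lemma \ref{lemma1} on the smooth piece (after conjugation to flip $e^{inR}$ into $e^{-inR}$, which is needed because \ref{lemma1} is stated only for the minus sign). The crucial uniformity in $z_0$ is supplied by the a priori bounds $\lVert f\rVert_\alpha \leq 2$ and $\lVert\partial f\rVert_p \leq \tfrac{1}{2}B_pC_p\lVert q_1\rVert_p$, which you correctly extract from the fixed-point equation. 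Your version buys simplicity: you avoid introducing lemma \ref{lemmaB3} entirely, use a single approximant for all $z_0$, unify the two derivative estimates under one reduction, and reuse a lemma already in hand; the cost is a slower decay rate $n^{-2/(p(2p+1))}$ in the remainder (still sufficient, since only qualitative convergence is needed). The conjugation identity you invoke, $\wbar{\Ca}(e^{inR}g)=\overline{\Ca(e^{-inR}\wbar{g})}$, is correct because $R$ is real, and it converts the hypothesis $\partial(q^\epsilon f)\in L^p$ into $\wbar{\partial}\,\overline{q^\epsilon f}\in L^p$ as required by \ref{lemma1}; alternatively one could simply observe that \ref{lemma1} holds for both signs since its proof rests on lemma \ref{intByParts}, which is symmetric in $\pm$.
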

\begin{proof}
We will prove the claim for $f$. The one for $g$ is proven identically. Note that $f \in C^\alpha(\wbar{\Omega})$. Because $n>n_0$ we have $\lVert S_{n,z_0} \rVert_\alpha \leq \frac{1}{2}$. Thus $\lVert f \rVert_\alpha \leq 1 + \lVert S_{n,z_0}f \rVert_\alpha \leq 1 + \frac{1}{2} \lVert f \rVert_\alpha$ so $\lVert f \rVert_\alpha \leq 2$ for every $n>n_0$. This implies
\begin{equation}
\lVert S_{n,z_0} f \rVert_\alpha \leq C n^{-\frac{2}{p(2p+1)}} \lVert f \rVert_\alpha \leq 2C n^{-\frac{2}{p(2p+1)}} \longrightarrow 0.
\end{equation}
Note that $C$ and $n_0$ do not depend on $z_0$.

The other limits are not so trivial. We will first prove that $\lVert \partial S_{n,z_0}f \rVert_p$ tends to zero. For each $z_0\in\Omega$ and $\epsilon > 0$ choose $q_{\epsilon,z_0} \in C^\infty_0(\Omega)$ such that $\lVert q_1 - q_{\epsilon,z_0} \rVert_p < \min\big( \frac{\epsilon}{2C_\alpha}, \frac{2\epsilon}{B_pC_p} \big)$, $z_0 \notin \supp q_{\epsilon,z_0}$ and it satisfies the bounds uniform in $z_0$ given explicitly in lemma \ref{lemmaB3}. By the properties of the Cauchy and Beurling operators (thm \ref{cauchyOPbasics}) we get
\begin{equation}
\partial S_{n,z_0} f = -\frac{1}{4} \Pi\big(e^{-inR} \wbar{\Ca} (e^{inR} (q_1-q_{\epsilon,z_0}) f )\big)  - \frac{1}{4} \Pi\big(e^{-inR} \wbar{\Ca} (e^{inR} q_{\epsilon,z_0} f )\big).
\end{equation}
Note that
\begin{equation}
\big\lVert \Pi \big( e^{-inR} \wbar{\Ca}(e^{inR}Qf)\big)\big\rVert_p \leq B_p \big\lVert \wbar{\Ca}(e^{inR} Qf) \big\rVert_p \leq 2 B_p C_p \lVert Q \rVert_p
\end{equation}
for every $Q \in L^p(\Omega)$. Using this, the formula for the derivative and integration by parts (thm \ref{intByParts}) we get
\begin{equation}
\begin{split}
\big\lVert \partial &S_{n,z_0} f \big\rVert_p \leq \frac{1}{2} B_p C_p \lVert q_1 - q_{\epsilon, z_0} \rVert_p + \frac{1}{4}B_p \big\lVert \wbar{\Ca}( e^{inR} q_{\epsilon,z_0} f ) \big\rVert_p\\
&\leq \epsilon + \frac{B_p}{8n} \Big\lVert \Big( e^{inR} \frac{q_{\epsilon,z_0}}{z-z_0} f - \wbar{\Ca}\big( e^{inR} \partial \frac{q_{\epsilon,z_0} f }{z-z_0}\big) \Big) \Big\rVert_p\\
&\leq \epsilon + \frac{B_p}{8n} \Big\lVert \frac{q_{\epsilon,z_0}}{z-z_0}\Big\rVert_p \lVert f \rVert_\infty + \frac{B_pC_p}{8n} \Big\lVert \partial \frac{q_{\epsilon,z_0}}{z-z_0} \Big\rVert_p \lVert f \rVert_\infty \\
&\phantom{\leq}+ \frac{B_pC_p}{8n} \Big\lVert \frac{q_{\epsilon,z_0}}{z-z_0} \Big\rVert_\infty \lVert \partial f \rVert_p.
\end{split}
\end{equation}
Next note that $\lVert \partial f \rVert_p = \frac{1}{4} \lVert \Pi\big(e^{-inR}\wbar{\Ca}(e^{inR}q_1 f)\big) \rVert_p \leq \frac{1}{4}B_p C_p \lVert q_1 \rVert_p \lVert f \rVert_\infty$. By $\lVert f \rVert_\infty \leq 2$ we get the estimate
\begin{equation}
\label{lemma5312est}
\begin{split}
\sup_{z_0\in\Omega} \big\lVert \partial S_{n,z_0}f \big\rVert_p &\leq \epsilon + \frac{B_p}{4n}\sup_{z_0\in\Omega} \Big\lVert \frac{q_{\epsilon,z_0}}{z-z_0} \Big\rVert_p + \frac{B_pC_p}{4n} \sup_{z_0\in\Omega} \Big\lVert \partial \frac{q_{\epsilon,z_0}}{z-z_0} \Big\rVert_p\\
&+ \frac{B_p^2C_p^2}{16n}\sup_{z_0\in\Omega}\Big\lVert \frac{q_{\epsilon,z_0}}{z-z_0} \Big\rVert_\infty \lVert q_1 \rVert_p \,.
\end{split}
\end{equation}
This tends to $\epsilon$ when $n$ grows because of the bounds given by lemma \ref{lemmaB3} for $q_{\epsilon,z_0}$. Since $\epsilon$ was arbitrary, the limit is zero.

The limit for $\wbar{\partial} S_{n,z_0} f$ follows quite similarly. By taking a smooth potential $q_{\epsilon,z_0}$ near $q_1$ we get the same estimate as in \eqref{lemma5312est} for $\lVert \wbar{\partial} S_{n,z_0}f \rVert_\infty$, except 
for different constants and that in the second term we have the $L^\infty$ norm instead of the $L^p$ norm.
\end{proof}

\bigskip

\begin{theorem}
\label{solExistence}
Assume that $q_1, q_2 \in L^p(\Omega)$, $2<p<\infty$, $n>n_0$ and $z_0 \in \Omega$. Then there are solutions $u_j \in W^{2,p}(\Omega)$ to the Schr\"odinger equations $\Delta u_j + q_j u_j = 0$ which have the form
\begin{equation}
\begin{split}
u_1 &= e^{in(z-z_0)^2}(1 + r_{n,z_0}),\\
u_2 &= e^{in(\wbar{z}-\wbar{z_0})^2}(1 + s_{n,z_0}),
\end{split}
\end{equation}
where
\begin{equation}
\label{solExistThmRemainder}
\lim_{n\to\infty} \sup_{z_0\in\Omega} \lVert r_{n,z_0} \rVert_{W^{1,p}(\Omega)} = \lim_{n\to\infty} \sup_{z_0\in\Omega} \lVert s_{n,z_0} \rVert_{W^{1,p}(\Omega)} = 0.
\end{equation}
\end{theorem}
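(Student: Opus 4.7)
The plan is to take the $u_j$ directly from the fixed point equation of lemma~\ref{intEqn} and let lemma~\ref{restNorms} supply the decay of the remainder. Concretely, for $n>n_0$ and $z_0\in\Omega$ let $f,g\in C^\alpha(\wbar{\Omega})$ be the unique solutions of $f=1+S_{n,z_0}f$ and $g=1+\wbar{S}_{n,z_0}g$ given by lemma~\ref{intEqn}. Set
\begin{equation}
u_1(z)=e^{in(z-z_0)^2}f(z),\qquad u_2(z)=e^{in(\wbar{z}-\wbar{z_0})^2}g(z),\qquad r_{n,z_0}=S_{n,z_0}f,\qquad s_{n,z_0}=\wbar{S}_{n,z_0}g.
\end{equation}
By lemma~\ref{intEqn} we already know $f,g\in W^{2,p}(\Omega)$, and multiplication by the $C^\infty$ factors $e^{in(z-z_0)^2}$ and $e^{in(\wbar{z}-\wbar{z_0})^2}$ keeps us in $W^{2,p}(\Omega)$, so the regularity claim for $u_j$ is automatic.

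First I would verify that $u_1$ solves the Schr\"odinger equation. Writing $\psi=(z-z_0)^2$, the reduction \eqref{schrodingerReduced} shows that $\Delta(e^{in\psi}f)+q_1 e^{in\psi}f=0$ is equivalent to the pair $\wbar{\partial}f=e^{-inR}v$ and $\partial v=-\tfrac{1}{4}q_1 e^{inR}f$. Setting $v=-\tfrac{1}{4}\wbar{\Ca}(e^{inR}q_1 f)$ and applying property~1 of theorem~\ref{cauchyOPbasics} gives $\partial v=-\tfrac14 q_1 e^{inR}f$, while applying $\wbar{\partial}\Ca=\mathrm{id}$ to the identity $f=1+S_{n,z_0}f=1+\Ca(e^{-inR}v)$ gives $\wbar{\partial}f=e^{-inR}v$. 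Hence $u_1$ satisfies $\Delta u_1+q_1 u_1=0$, and the analogous computation with $\partial$ and $\wbar{\partial}$ swapped, using $g=1+\wbar{S}_{n,z_0}g$, shows $\Delta u_2+q_2 u_2=0$.

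It remains to prove \eqref{solExistThmRemainder}. By lemma~\ref{restNorms} the three quantities $\sup_{z_0}\lVert S_{n,z_0}f\rVert_\alpha$, $\sup_{z_0}\lVert\wbar{\partial}S_{n,z_0}f\rVert_\infty$ and $\sup_{z_0}\lVert\partial S_{n,z_0}f\rVert_p$ all tend to zero as $n\to\infty$. Since $\Omega$ is bounded, the H\"older bound controls the $L^p(\Omega)$ norm via
\begin{equation}
\lVert r_{n,z_0}\rVert_{L^p(\Omega)}\leq m(\Omega)^{1/p}\lVert r_{n,z_0}\rVert_\infty\leq m(\Omega)^{1/p}\lVert r_{n,z_0}\rVert_\alpha,
\end{equation}
and the $L^\infty$ bound on $\wbar{\partial}r_{n,z_0}$ likewise controls its $L^p$ norm, so
\begin{equation}
\lVert r_{n,z_0}\rVert_{W^{1,p}(\Omega)}\leq m(\Omega)^{1/p}\bigl(\lVert r_{n,z_0}\rVert_\alpha+\lVert\wbar{\partial}r_{n,z_0}\rVert_\infty\bigr)+\lVert\partial r_{n,z_0}\rVert_p,
\end{equation}
which converges to zero uniformly in $z_0$. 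The argument for $s_{n,z_0}=\wbar{S}_{n,z_0}g$ is identical.

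The only step requiring any care is the algebraic verification that the fixed point of $S_{n,z_0}$ actually yields a solution of the Schr\"odinger equation; everything else is bookkeeping on top of the already proved lemmas~\ref{intEqn} and~\ref{restNorms}. There is no serious obstacle, since the reduction from $\Delta u+q u=0$ to the $\Ca,\wbar{\Ca}$ system \eqref{schrodingerReduced} was built precisely so that this verification would be immediate from the Cauchy calculus of theorem~\ref{cauchyOPbasics}.
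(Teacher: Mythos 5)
Your proposal is correct and follows essentially the same route as the paper: choose $f,g$ from lemma~\ref{intEqn}, set $r_{n,z_0}=S_{n,z_0}f$, $s_{n,z_0}=\wbar{S}_{n,z_0}g$, invoke lemma~\ref{restNorms} for the decay, and verify the PDE via the Cauchy-operator calculus. The only cosmetic difference is that the paper verifies $\Delta u_j+q_ju_j=0$ by a direct chain of equalities starting from $4\partial\wbar{\partial}(e^{in(z-z_0)^2}f)$, while you route through the first-order system \eqref{schrodingerReduced}; both are valid, and you are slightly more explicit than the paper about assembling the three estimates of lemma~\ref{restNorms} into the $W^{1,p}$ bound.
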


\begin{proof}
Choose $f$ and $g$ as in lemma \ref{intEqn} and set $u_1 = e^{in(z-z_0)^2}f$, $u_2 = e^{in(\wbar{z}-\wbar{z_0})^2}g$. Now $r_{n,z_0} = S_{n,z_0} f$ and $s_{n,z_0} = \wbar{S}_{n,z_0}g$ satisfy the estimate \eqref{solExistThmRemainder} by lemma \ref{restNorms}.

It is a direct calculation to see that the functions $u_j$ satisfy the Schr\"odinger equation:
\begin{equation}
\begin{split}
\Delta u &= 4\partial\wbar{\partial} \big(e^{in(z-z_0)^2}f\big) = 4\partial\Big( e^{in(z-z_0)^2} \wbar{\partial}\big( 1 - \frac{1}{4}\Ca( e^{-inR} \wbar{\Ca}(e^{inR} q_1 f ))\big)\Big)\\
&= -\partial\big( e^{-in(\wbar{z}-\wbar{z_0})^2} \wbar{\Ca}(e^{inR}q_1 f)\big) = -e^{in(z-z_0)^2}q_1 f = -q_1 u,\\
\Delta v &= 4 \wbar{\partial}\partial \big(e^{in(\wbar{z}-\wbar{z_0})^2}g\big) = 4\wbar{\partial} \Big( e^{in(z-z_0)^2} \partial \big( 1 - \frac{1}{4}\wbar{\Ca}(e^{-inR}\Ca(e^{inR}q_2 g)) \big) \Big)\\
&= -\wbar{\partial}\big( e^{-in(z-z_0)^2}\Ca(e^{inR}q_2 g)\big) = -e^{in(\wbar{z}-\wbar{z_0})^2} q_2 g = -q_2 v.
\end{split}
\end{equation}
We know that $f,g \in W^{2,p}(\Omega)$ and $e^{in(z-z_0)^2}, e^{in(\wbar{z}-\wbar{z_0})^2} \in \C^\infty(\wbar{\Omega})$, so $u$ and $v$ are in $W^{2,p}(\Omega)$.
\end{proof}

\newpage
\section{The proof}
\label{sect_proof}
All the ingredients for the main theorem are ready. The proof is very short since all the nontrivial lemmas were proven in the previous sections. Basically we plug the special oscillating solutions constructed in section \ref{exist} into the orthogonality relation $0 = \int (q_1 - q_2) u_1 u_2$ proven in section \ref{orthogonality}. Then we use the stationary phase method which was proven in section \ref{approx} to get $q_1 = q_2$.

\begin{theoremSect}
\label{FINALtheorem}
Let $2 < p \leq \infty$, $q_1, q_2 \in W^{1,p}_\oplus(\Omega)$ and $C_{q_1} = C_{q_2}$. Then $q_1 = q_2$.
\end{theoremSect}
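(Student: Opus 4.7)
The plan is to combine the three main ingredients developed in the previous sections: the orthogonality relation (Theorem~\ref{ORTthm}), the existence of oscillating solutions (Theorem~\ref{solExistence}), and the stationary phase estimates (Theorems~\ref{OPthm} and \ref{OPremainder}). First I would fix $n > n_0$ and $z_0 \in \Omega$, and invoke Theorem~\ref{solExistence} to obtain solutions
\begin{equation}
u_1 = e^{in(z-z_0)^2}(1 + r_{n,z_0}), \quad u_2 = e^{in(\wbar{z}-\wbar{z_0})^2}(1 + s_{n,z_0})
\end{equation}
to $\Delta u_j + q_j u_j = 0$ with $u_j \in W^{2,p}(\Omega)$. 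Since $C_{q_1} = C_{q_2}$, Theorem~\ref{ORTthm} applies and gives
\begin{equation}
0 = \int_\Omega (q_1 - q_2) u_1 u_2 \,dm(z) = \int_\Omega (q_1 - q_2) e^{inR}(1 + r_{n,z_0} + s_{n,z_0} + r_{n,z_0} s_{n,z_0}) \,dm(z),
\end{equation}
where $R = (z-z_0)^2 + (\wbar{z}-\wbar{z_0})^2$.

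Next I would multiply by $\frac{2n}{\pi}$ and split the integral into a principal part and a remainder. The principal part is $T_n(q_1-q_2)(z_0)$ in the notation of Theorem~\ref{OPdefThm}, which by Theorem~\ref{OPthm} converges to $(q_1-q_2)(z_0)$ in $L^2(\Omega, z_0)$ as $n \to \infty$; note that $W^{1,p}_\oplus(\Omega) \subset L^p(\Omega) \subset L^2(\Omega)$ since $\Omega$ is bounded. The remainder is
\begin{equation}
\int_\Omega \frac{2n}{\pi} e^{inR}(q_1-q_2)(r_{n,z_0} + s_{n,z_0} + r_{n,z_0}s_{n,z_0})(z) \,dm(z),
\end{equation}
viewed as a function of $z_0$. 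To bound its $L^2(\Omega, z_0)$ norm I would apply Theorem~\ref{OPremainder} with $q = q_1 - q_2 \in W^{1,p}_\oplus(\Omega)$ and with $g_{z_0} = r_{n,z_0} + s_{n,z_0} + r_{n,z_0}s_{n,z_0}$; this yields
\begin{equation}
\bigl\lVert \text{remainder} \bigr\rVert_{L^2(\Omega,z_0)} \leq P \lVert q_1 - q_2 \rVert_{W^{1,p}_\oplus(\Omega)} \esssup_{z_0 \in \Omega} \lVert g_{z_0} \rVert_{W^{1,p}(\Omega)}.
\end{equation}

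To conclude, I need $\sup_{z_0} \lVert g_{z_0} \rVert_{W^{1,p}(\Omega)} \to 0$ as $n \to \infty$. Since $r_{n,z_0} = S_{n,z_0} f$ and $s_{n,z_0} = \wbar{S}_{n,z_0} g$, Lemma~\ref{restNorms} gives uniform-in-$z_0$ decay of $r_{n,z_0}$ and $s_{n,z_0}$ in the relevant norms: H\"older norms for the functions themselves and $L^p$/$L^\infty$ norms for their derivatives, which together control the $W^{1,p}$ norm of $r_{n,z_0} + s_{n,z_0}$; the product $r_{n,z_0}s_{n,z_0}$ is controlled by the algebra-type inequality using the uniform boundedness of $\lVert r_{n,z_0}\rVert_\alpha$ and $\lVert s_{n,z_0}\rVert_\alpha$ combined with the derivative estimates. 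Therefore the remainder tends to zero in $L^2(\Omega, z_0)$, and passing to the limit $n \to \infty$ in $L^2(\Omega,z_0)$ forces $q_1(z_0) - q_2(z_0) = 0$ for almost every $z_0 \in \Omega$, giving $q_1 = q_2$.

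The main obstacle is the bookkeeping needed to verify that $g_{z_0} = r_{n,z_0} + s_{n,z_0} + r_{n,z_0}s_{n,z_0}$ satisfies the hypotheses of Theorem~\ref{OPremainder} uniformly in $z_0$, namely that its $W^{1,p}(\Omega)$ norm tends to zero; the bilinear term $r_{n,z_0} s_{n,z_0}$ is where one must carefully combine H\"older bounds on the factors with $L^p$ bounds on their single derivatives, using $W^{1,p}(\Omega) \hookrightarrow L^\infty(\Omega)$ for $p > 2$ to estimate $\partial(r_{n,z_0} s_{n,z_0}) = (\partial r_{n,z_0}) s_{n,z_0} + r_{n,z_0}(\partial s_{n,z_0})$ and likewise for $\wbar{\partial}$. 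Once this uniform decay is in hand, the rest of the argument is just assembling the pieces.
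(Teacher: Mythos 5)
Your proposal is correct and follows essentially the same route as the paper: combine the orthogonality relation of Theorem~\ref{ORTthm} with the oscillating solutions of Theorem~\ref{solExistence}, then control the principal term via Theorem~\ref{OPthm} and the remainder via Theorem~\ref{OPremainder} with $g_{z_0}=\varepsilon_{n,z_0}=r_{n,z_0}+s_{n,z_0}+r_{n,z_0}s_{n,z_0}$. The bookkeeping you flag for the product term is exactly what the paper disposes of in one line by invoking the Sobolev embedding $W^{1,p}(\Omega)\hookrightarrow L^\infty(\Omega)$ for $p>2$, since Theorem~\ref{solExistence} already gives $\sup_{z_0}\lVert r_{n,z_0}\rVert_{W^{1,p}(\Omega)},\ \sup_{z_0}\lVert s_{n,z_0}\rVert_{W^{1,p}(\Omega)}\to 0$.
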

\begin{proof}
Because $\Omega$ is bounded we may assume that $p < \infty$. Take $n_0$ as in definition \ref{SSbarDefinitions}. Then for each $n>n_0$ and $z_0 \in \Omega$ take the solutions $u,v \in W^{2,p}(\Omega)$ of the Schr\"odinger equations as in theorem \ref{solExistence}. This is possible because $q_1, q_2 \in W^{1,p}_\oplus(\Omega) \subset L^p(\Omega)$. By the orthogonality relation of theorem \ref{ORTthm} we know that
\begin{equation}
0 = \frac{2n}{\pi}\int_\Omega (q_1 - q_2)uv dm = \int_\Omega \frac{2n}{\pi} e^{inR} (q_1 - q_2) \big(1 + r_{n,z_0} \big) \big(1 + s_{n,z_0} \big) dm,
\end{equation}
for every $n> n_0$ and $z_0 \in \Omega$.

Note that $r_{n,z_0} = f -1$ and $s_{n,z_0} = g -1$ are continuous with respect to $z_0$ by lemma \ref{intEqn}. Denote $\varepsilon_{n,z_0} = r_{n,z_0} + s_{n,z_0} + r_{n,z_0}s_{n,z_0}$. By the Sobolev embedding $W^{1,p}(\Omega) \to L^\infty(\Omega)$ we see that $\sup_{z_0\in\Omega} \lVert \varepsilon_{n,z_0} \rVert_{W^{1,p}(\Omega)} \longrightarrow 0$ when $n$ grows because the same was true for $r_{n,z_0}$ and $s_{n,z_0}$ by theorem \ref{solExistence}.

Next we use the stationary phase method theorems \ref{OPthm}, \ref{OPremainder} and the orthogonality relation proven in \ref{ORTthm} to get
\begin{equation}
\begin{split}
\lVert &q_1 - q_2 \rVert_2 = \lim_{n\to\infty}\Big\lVert \int_\Omega \frac{2n}{\pi}e^{inR}(q_1 - q_2) dm \Big\rVert_{L^2(\Omega,z_0)} \\
&= \lim_{n\to\infty} \Big\lVert - \int_\Omega \frac{2n}{\pi} e^{inR} (q_1 - q_2) \varepsilon_{n,z_0} dm \Big\rVert_{L^2(\Omega,z_0)} \\
&\leq \lim_{n\to\infty} P \lVert q_1 - q_2 \rVert_{W^{1,p}_\oplus(\Omega)} \sup_{z_0\in\Omega} \lVert \varepsilon_{n,z_0} \rVert_{W^{1,p}(\Omega)} = 0.
\end{split}
\end{equation}
Thus $q_1 = q_2$.
\end{proof}

\newpage
\section{A closer look at Bukhgeim's paper}
\label{mistakeAnalysis}
We discuss an unclear argument in \cite{bukhgeim}, namely is it enough to prove his formula \cite[(3.34)]{bukhgeim} to get the result for potentials in $L^p(\Omega)$? We could only prove that the boundary data determines the potential uniquely if $q$ is piecewise in $W^{1,p}$, $p>2$. We remark that before \cite{bukhgeim}, even for $C^\infty$ potentials, there were no global uniqueness results for the inverse problem of the 2D Schr\"odinger equation. After Bukhgeim's paper, recent results in 2D use his idea of having solutions whose phase functions are analytic with nondegenerate critical points. See for example \cite{imanuvilovUhlmannYamamoto} and \cite{guillarmouTzou}.

\begin{claim*}[Bukhgeim 3.4.]
The set
\begin{equation}
G = \{ u_1u_2 \mid \text{$u_j$ given by lemma \ref{solExistence}} \}
\end{equation}
is dense in $L^q(\Omega)$ for $1 \leq q < \infty$.
\end{claim*}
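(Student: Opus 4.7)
The plan is a duality argument. Suppose, toward contradiction, that the closed linear span of $G$ is not dense in $L^q(\Omega)$ for some $q \in [1,\infty)$; then by Hahn--Banach there exists a nonzero $f \in L^{q'}(\Omega)$, $1/q + 1/q' = 1$, satisfying $\int_\Omega f u_1 u_2 \, dm = 0$ for every product $u_1 u_2 \in G$. Substituting the explicit form of $u_1 u_2$ from theorem \ref{solExistence} and multiplying by $2n/\pi$, this annihilation rewrites as
\begin{equation*}
0 = T_n f(z_0) + R_n(z_0), \qquad R_n(z_0) := \int_\Omega \frac{2n}{\pi} e^{inR} f \, \varepsilon_{n,z_0} \, dm,
\end{equation*}
for every $n > n_0$ and $z_0 \in \Omega$, where $\varepsilon_{n,z_0} = r_{n,z_0} + s_{n,z_0} + r_{n,z_0} s_{n,z_0}$.

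The next step is to pass to the limit $n \to \infty$ in this identity in $L^2(\Omega, z_0)$. Since $\Omega$ is bounded and $q' \in (1,\infty]$, the most natural setting is $f \in L^2(\Omega)$, which is automatic for $q \leq 2$ and can, at least in principle, be recovered for $q > 2$ by approximating inside the target space rather than in the dual. Granted $f \in L^2(\Omega)$, theorem \ref{OPthm} gives $T_n f \to f$ in $L^2(\Omega)$. If in addition one could show that $\lVert R_n \rVert_{L^2(\Omega)} \to 0$, then the identity $T_n f + R_n = 0$ forces $f = 0$, contradicting the choice of $f$ and thereby proving the density of the span.

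The sole, but severe, obstacle is the $L^2$ control of the remainder $R_n$. Theorem \ref{OPremainder} yields exactly the bound we want, but only when the function playing the role of ``$q$'' (here $f$) belongs to $W^{1,p}_\oplus(\Omega)$; a general $f \in L^{q'}$ enjoys no such regularity. Mere approximation of $f$ by functions $f_j \in W^{1,p}_\oplus$ does not rescue the scheme, since the $f_j$ no longer annihilate $G$ and the residual error $\int_\Omega (f - f_j) u_1 u_2 \, dm$, when multiplied by $2n/\pi$, is only crudely bounded by $\frac{2n}{\pi}\lVert f - f_j \rVert_{q'} \lVert u_1 u_2 \rVert_q$, which blows up in $n$ no matter how close $f_j$ is to $f$. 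Closing this gap would presumably require either a refined stationary-phase estimate that survives for rough $f$, or some form of cancellation exploiting the structure that $\varepsilon_{n,z_0}$ inherits from the fixed-point equation in lemma \ref{intEqn} --- for instance, an analysis of the joint $(z,z_0)$ dependence permitting integration by parts in $z_0$ rather than in $z$. I expect this to be the main difficulty, and it is precisely the point scrutinized in section \ref{mistakeAnalysis}.
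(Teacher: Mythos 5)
The crucial context here is that the paper does not prove this claim, and in fact the entire purpose of section \ref{mistakeAnalysis}, where the claim appears, is to argue that Bukhgeim's alleged proof of it is incomplete. The paper reproduces Bukhgeim's reasoning --- reduce to annihilators $a \in C^1(\wbar{\Omega})$ by density of $C^1$ in $L^q$, then show $\lim_n \int \frac{2n}{\pi} a\, u_1 u_2\, dm = a(z_0)$ for such $a$ --- and then exhibits the abstract counterexample $G = \big(\operatorname{span}\{\phi\}\big)^\perp$, $\phi \in L^2(\Omega)$ discontinuous, for which $C^1(\wbar{\Omega}) \cap G^\perp = \{0\}$ yet $G$ is not dense. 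That is, the paper's verdict is that the implication Bukhgeim needs is invalid and the claim remains unproven; this is precisely why theorem \ref{FINALtheorem} is proved only for $W^{1,p}_\oplus$ potentials rather than $L^p$.

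Your proposal lands on exactly the same obstruction, just from a more computational angle. You set up the Hahn--Banach duality scheme, correctly observe that the annihilator $f \in L^{q'}(\Omega)$ has no reason to lie in $W^{1,p}_\oplus(\Omega)$, and note that therefore theorem \ref{OPremainder} cannot be invoked to kill the remainder $R_n$, while naive approximation of $f$ by smooth $f_j$ leaves a residual $\frac{2n}{\pi}\int (f - f_j) u_1 u_2\, dm$ that blows up linearly in $n$. This is the concrete version of what the paper states abstractly: having the stationary-phase machinery only for regular annihilators does not let you handle a generic $L^{q'}$ annihilator, so density does not follow. Your honest identification of the gap and your remark that it is ``precisely the point scrutinized in section \ref{mistakeAnalysis}'' is correct; in fact you have essentially reconstructed the paper's critique blind. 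The two presentations differ only in idiom: the paper shows the logical form of the inference is unsound via a functional-analytic counterexample, while you show the specific estimate that would be needed fails. Both lead to the same conclusion, namely that the claim as stated is not established.

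Two secondary remarks. First, your aside that $f \in L^2$ ``can, at least in principle, be recovered for $q > 2$ by approximating inside the target space'' is overly optimistic as written: density in $L^2$ transfers to $L^q$ only for $q \leq 2$ on a bounded domain, and for $q > 2$ the annihilator lives in $L^{q'}$ with $q' < 2$, outside the reach of theorem \ref{OPthm}, so that case carries an additional difficulty beyond the main one you identify. Second, your speculation that a rescue might come from a refined estimate on $R_n$ exploiting the $(z,z_0)$-structure of $\varepsilon_{n,z_0}$ (for instance integrating by parts in $z_0$) is a genuine idea the paper does not explore; the paper simply records the gap and restricts to $W^{1,p}_\oplus$ potentials. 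So you should not read the paper as endorsing the claim: it explicitly declines to prove it, and your proposal correctly arrives at the same judgment.
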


Next we will outline his proof. First he notes that
\begin{equation}
u_1u_2 = e^{inR}(1+r_{n,z_0} + s_{n,z_0} + r_{n,z_0}s_{n,z_0}),
\end{equation}
where $r_{n,z_0} = S_{n,z_0}f = \sum_{k=1}^\infty S_{n,z_0}^k 1$ and $s_{n,z_0} = \wbar{S}_{n,z_0}g = \sum_{k=1}^\infty \wbar{S}_{n,z_0}^k 1$.
Next he tells that because $C^1(\wbar{\Omega})$ is dense in $L^q$ it is enough to show that if $a \in C^1(\wbar{\Omega})$ and
\begin{equation}
\int_\Omega a u_1u_2 \,dm = 0
\end{equation}
for every $u_1u_2 \in G$ then $a = 0$. After that he proves that if $a$ is continuously differentiable, then
\begin{equation}
\lim_{n\to\infty} \int_\Omega \frac{2n}{\pi} a u_1u_2 \,dm = a(z_0).
\end{equation}
This he does by first showing that $\int_\Omega 2n e^{inR} a \,dm \longrightarrow \pi a(z_0)$. Then he integrates by parts to show that the remaining terms tend to zero when $n$ grows. The ideas of this last part have been the basis of lemma \ref{restNorms} and theorem \ref{OPremainder}.

It is unclear why it would be enough to assume that $a$ is continuously differentiable. He does prove that $C^1(\wbar{\Omega}) \cap G^{\perp} = \{0\}$. But this does not imply that $G$ is dense in $L^2$ unless some other properties of $G$ are used. For example if $\phi$ is some discontinuous $L^2(\Omega)$ function and
\begin{equation}
G = \Big\{ h \in L^2(\Omega) \,\Big|\, \int_\Omega h \phi \,dm = 0 \Big\} = \big( \operatorname{span} \{\phi\} \big)^\perp
\end{equation}
then $G^\perp = \big((\operatorname{span}\{\phi\})^\perp\big)^\perp = \operatorname{span}\{\phi\}$ because the latter is closed as a one dimensional subspace. Thus $C^1(\wbar{\Omega}) \cap G^\perp = \{ 0 \}$ but $\wbar{G} = G \neq L^2(\Omega)$.

\newpage
\section{Appendix}
Note that in the next theorems the domain is not necessarily the unit disc $\Omega$ but any bounded Lipschitz domain in $\R^2$. We use the definitions of \cite{grisvard} for Sobolev spaces on Lipschitz domains. The following is well defined according to \cite{gagliardo}.
\begin{definitionSect}
\label{lipschitz_trace}
Let $X\subset \R^2$ be a bounded Lipschitz domain and let $1<p<\infty$. Then the mapping $\Tr : Lip(\wbar{X}) \to L^p(\partial X)$, $\Tr u = u_{|\partial X}$ has a unique continuous extension from $W^{1,p}(X)$ to $L^p(X)$. It is called \emph{the trace operator of $W^{1,p}(X)$}.
\end{definitionSect}

\begin{theoremSect}[Intergration by parts]
\label{soboIntByParts}
Let $X \subset \R^2$ be a bounded Lipschitz domain.
Given $u \in W^{1,p}(X)$, $v \in W^{1,q}(X)$ with $1< p,q < \infty$, $p^{-1} + q^{-1} = 1$ and $k\in\{1,2\}$, we have the integration by parts formula
\begin{equation}
\int_X u \partial_k v \, dm = \int_{\partial X} n_k \Tr u \Tr v \, d\sigma - \int_X v \partial_k u \, dm,
\end{equation}
where $n_k$ is the component number $k$ of the outwards pointing unit normal vector of $\partial X$ defined almost everywhere and $\Tr$ is the trace operator defined in \ref{lipschitz_trace}.
\end{theoremSect}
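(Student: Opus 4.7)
The plan is a standard density argument. First I would establish the formula for $u, v \in C^1(\wbar{X})$ by applying the classical divergence (Gauss--Green) theorem on Lipschitz domains to the vector field whose $k$-th component is $uv$ and whose remaining component vanishes. Then I would extend to general $u \in W^{1,p}(X)$, $v \in W^{1,q}(X)$ by density of smooth functions together with the continuity of the trace map from Definition \ref{lipschitz_trace}.

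In the smooth case the product rule gives $\partial_k(uv) = u\,\partial_k v + v\,\partial_k u$, and Gauss--Green yields $\int_X \partial_k(uv)\,dm = \int_{\partial X} n_k uv\,d\sigma$. Rearranging produces the identity for $C^1$ arguments, with the outward unit normal $n = (n_1, n_2)$ defined $\sigma$-a.e.\ on $\partial X$ via Rademacher's theorem applied to the Lipschitz charts.

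For the density step, I would pick $u_m, v_m \in C^\infty(\wbar{X})$ with $u_m \to u$ in $W^{1,p}(X)$ and $v_m \to v$ in $W^{1,q}(X)$; such approximations exist because $X$ is Lipschitz, see \cite{grisvard}. The smooth identity applied to $(u_m, v_m)$ must be passed to the limit. For the interior terms I write $u_m \partial_k v_m - u\,\partial_k v = (u_m-u)\,\partial_k v_m + u\,(\partial_k v_m - \partial_k v)$ and apply H\"older's inequality with conjugate exponents $p, q$; both pieces tend to zero since $\|\partial_k v_m\|_{L^q(X)}$ stays bounded while $\|u_m-u\|_{L^p(X)} \to 0$, and symmetrically $\int_X v_m \partial_k u_m \,dm \to \int_X v\,\partial_k u\,dm$. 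For the boundary integral, the continuity of $\Tr : W^{1,p}(X) \to L^p(\partial X)$ and $\Tr : W^{1,q}(X) \to L^q(\partial X)$, combined with $|n_k| \leq 1$ a.e.\ and H\"older on $\partial X$, gives $\int_{\partial X} n_k \Tr u_m \Tr v_m\,d\sigma \to \int_{\partial X} n_k \Tr u \Tr v\,d\sigma$.

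The main obstacle in this route is not the passage to the limit but the Gauss--Green formula for $C^1$ functions on a Lipschitz domain itself: a finite partition of unity is required to reduce to the case of a region lying locally below a Lipschitz graph, where Rademacher's theorem supplies the a.e.-defined outward normal and the area formula turns the boundary integral into one over $\R$. Both this fact and the density of $C^\infty(\wbar{X})$ in $W^{1,p}(X)$ for Lipschitz $X$ are classical; in keeping with the rest of the appendix they would be quoted from \cite{grisvard} rather than reproved here.
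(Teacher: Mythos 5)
Your argument is correct, and it is the standard one. The paper does not actually prove this lemma: it simply defers to \cite[thm 1.5.3.1]{grisvard}, which in turn cites \cite[chp 3, thm 1.1]{necas}. Your sketch --- Gauss--Green for $C^1(\wbar{X})$ on a Lipschitz domain (via a partition of unity, Rademacher for the a.e.\ normal, and the product rule), followed by density of $C^\infty(\wbar{X})$ in $W^{1,p}(X)$ and passage to the limit using H\"older in the interior and trace continuity plus H\"older on $\partial X$ --- is precisely the argument those references contain, so in substance you and the paper are taking the same route; you just unpack the citation. One small point worth noting: the density step is exactly why the hypothesis $1 < p,q < \infty$ appears, which matches the paper's remark that Grisvard does not assert the formula for $p,q \in \{1,\infty\}$; your proof makes the reason for that restriction visible.
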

\begin{proof}
According to \cite[thm 1.5.3.1]{grisvard} the proof is in \cite[chp 3, thm 1.1]{necas}. Note that Grisvard did not tell if the claim holds for $p,q \in \{1, \infty\}$.
\end{proof}

\begin{theoremSect}[Sobolev embedding]
\label{lipschitz_sobo_embedding}
Let $X\subset \R^2$ be a bounded Lipschitz domain, $2<p<\infty$ and $\alpha = 1 -\frac{2}{p}$. Then we have the continuous embedding $W^{1,p}(X) \subset C^\alpha(\wbar{X})$, 
\begin{equation}
\lVert u \rVert_{C^\alpha(\wbar{X})} \leq E_p \lVert u \rVert_{W^{1,p}(X)}.
\end{equation}
\end{theoremSect}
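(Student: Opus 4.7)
The plan is to reduce the estimate to the case of the whole plane via an extension operator and then prove the classical Morrey inequality by averaging over balls.

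First I would invoke the Calder\'on--Stein extension theorem, which for any bounded Lipschitz domain $X\subset\R^2$ furnishes a bounded linear operator $E : W^{1,p}(X) \to W^{1,p}(\R^2)$ with $Eu|_X = u$ and $\lVert Eu \rVert_{W^{1,p}(\R^2)} \leq K_p \lVert u \rVert_{W^{1,p}(X)}$. Once the global embedding $W^{1,p}(\R^2) \hookrightarrow C^\alpha(\R^2)$ is established, the statement of the theorem follows from
\[
\lVert u \rVert_{C^\alpha(\wbar X)} \leq \lVert Eu \rVert_{C^\alpha(\R^2)} \leq C_p \lVert Eu \rVert_{W^{1,p}(\R^2)} \leq C_p K_p \lVert u \rVert_{W^{1,p}(X)},
\]
with the product $C_p K_p$ serving as the constant $E_p$.

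Second, by density of $C^\infty_c(\R^2)$ in $W^{1,p}(\R^2)$ it suffices to prove Morrey's inequality $|u(x) - u(y)| \leq C_p |x-y|^\alpha \lVert \nabla u \rVert_{L^p(\R^2)}$ together with the bound $\lVert u \rVert_\infty \leq C_p \lVert u \rVert_{W^{1,p}(\R^2)}$ for $u \in C^\infty_c(\R^2)$. For the H\"older estimate I would introduce the ball average $u_{B(x,r)} = |B(x,r)|^{-1}\int_{B(x,r)} u\,dm$ and use the radial representation $u(x) - u(z) = -\int_0^1 \nabla u(x + t(z-x))\cdot(z-x)\,dt$. After Fubini and the substitution $w = x + t(z-x)$, one obtains
\[
|u(x) - u_{B(x,r)}| \leq \frac{1}{\pi}\int_{B(x,r)} \frac{|\nabla u(w)|}{|x-w|}\,dm(w),
\]
and H\"older's inequality bounds the right side by $C_p r^{1-2/p}\lVert \nabla u \rVert_{L^p}$. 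The integrability $\int_{B(x,r)} |x-w|^{-p'}\,dm(w) < \infty$ required for this step holds precisely because $p > 2$. Comparing $u(x)$ and $u(y)$ through averages over the two balls of radius $r = |x-y|$ centered at $x$ and $y$ (both contained in $B(x,2r)$) yields the Morrey estimate. The sup-norm control comes by fixing $r = 1$, estimating $|u_{B(x,1)}| \leq \pi^{-1/p'}\lVert u \rVert_{L^p}$ by H\"older, and adding the H\"older difference.

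The main technical input I rely on is the existence of the Calder\'on--Stein extension operator for bounded Lipschitz domains, which is nontrivial but classical. The only genuinely analytic point in the Morrey step itself is the borderline integrability of $|x-w|^{-p'}$ on a ball, which is the precise reason the embedding fails at $p = 2$ and restricts the conclusion to $p > 2$.
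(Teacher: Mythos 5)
Your proposal is correct and follows exactly the strategy the paper delegates to Grisvard's theorems 1.4.3.1 (extension of $W^{1,p}$ from a bounded Lipschitz domain to $\R^2$) and 1.4.4.1 (Morrey's embedding $W^{1,p}(\R^2) \hookrightarrow C^\alpha$). You have simply written out the proof of the Morrey step via ball averages and the radial representation of $u(x)-u(z)$ instead of citing it, so the decomposition into extension--then--global--embedding is the same as the paper's.
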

\begin{proof}
This is a consequence of theorems 1.4.4.1 and 1.4.3.1 in \cite{grisvard}.
\end{proof}

The next lemma is used only in the end of the proof of theorem \ref{OPremainder}.
\begin{lemmaSect}
\label{technicalIntegrals}
Let $X \subset \Omega$ be a Lipschitz domain, $z_0\in\Omega$ and $\beta > 0$. Then
\begin{equation}
\int_\Omega |z-z_0|^{-\beta} dm(z) \leq \int_\Omega |z|^{-\beta} dm(z) = \frac{2\pi}{2-\beta}, \qquad \text{if $\beta < 2$},
\end{equation}
and
\begin{equation}
\Big( \int_\Omega \big| \int_{\partial X} |z-z_0|^{-\beta} d\sigma(z) \big|^2 dm(z_0) \Big)^{1/2} \leq \sigma(\partial X) \sqrt{\frac{\pi}{1-\beta}}, \qquad \text{if $\beta < 1$}.
\end{equation}
\end{lemmaSect}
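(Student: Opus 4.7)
My plan for the first inequality is a change of variables plus a simple area-swap comparison. Setting $w = z - z_0$ reduces $\int_\Omega |z-z_0|^{-\beta}\,dm(z)$ to $\int_{\Omega-z_0}|w|^{-\beta}\,dm(w)$, where $\Omega - z_0$ is again an open disc of radius $1$, hence has the same area as $\Omega$. Writing $B = (\Omega-z_0)\setminus\Omega$ and $C = \Omega\setminus(\Omega-z_0)$, I would use that $m(B) = m(C)$, that $|w|\geq 1$ on $B$ so $|w|^{-\beta}\leq 1$, and that $|w|\leq 1$ on $C$ so $|w|^{-\beta}\geq 1$. This yields
\[
\int_B |w|^{-\beta}\,dm \;\leq\; m(B) \;=\; m(C) \;\leq\; \int_C |w|^{-\beta}\,dm,
\]
and adding the shared integral over $\Omega\cap(\Omega-z_0)$ to both sides gives the first claim. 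The identity $\int_\Omega |w|^{-\beta}\,dm = 2\pi/(2-\beta)$ is then a one-line polar coordinate computation valid for $\beta<2$.

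For the second inequality, my plan is to first apply Minkowski's integral inequality to swap the $L^2(\Omega, z_0)$-norm with the boundary integration over $\partial X$:
\[
\Big(\int_\Omega\Big|\int_{\partial X}|z-z_0|^{-\beta}\,d\sigma(z)\Big|^2 dm(z_0)\Big)^{1/2} \leq \int_{\partial X}\Big(\int_\Omega |z-z_0|^{-2\beta}\,dm(z_0)\Big)^{1/2} d\sigma(z).
\]
For each fixed $z \in \partial X \subset \wbar{\Omega}$, the inner integral now has the form treated in the first part with exponent $2\beta$; the translation argument used nothing beyond having two unit discs of equal area, so it applies for any $z\in\wbar{\Omega}$. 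Hence the inner integral is bounded by $2\pi/(2-2\beta) = \pi/(1-\beta)$ provided $\beta<1$, and the outer integration over $\partial X$ contributes the factor $\sigma(\partial X)$, giving the stated bound.

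I do not expect any real obstacle: both steps are elementary. The only technical point worth flagging is that in part one the argument must be presented so that the shifted center may lie in $\wbar{\Omega}$ rather than just $\Omega$, so that it can be reused inside the Minkowski step of part two (where $z$ ranges over $\partial X$, possibly touching $\partial\Omega$).
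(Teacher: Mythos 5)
Your proposal is correct and follows essentially the same route as the paper: the same area-swap comparison between the two unit discs for the first inequality, and Minkowski's integral inequality reducing the second inequality to the first applied with exponent $2\beta$. The point you flag about the shifted center lying in $\wbar{\Omega}$ rather than $\Omega$ is a fair observation of rigor; the paper's proof does not mention it, but the area-swap argument indeed goes through unchanged for $z_0\in\wbar{\Omega}$, so nothing is lost.
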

\begin{proof}
First note that $\int_\Omega |z-z_0|^{-\beta} dm(z) = \int_{B(z_0,1)}|z|^{-\beta}dm(z)$. Define $A = \Omega \cap B(z_0,1)$, $A_1 = \Omega \setminus B(z_0,1)$ and $A_2 = B(z_0,1) \setminus \Omega$. If $z\in A_1$ then $|z|<1$ so $|z|^{-\beta} > 1$. If $z\in A_2$ then $|z| \geq 1$ so $|z|^{-\beta} \leq 1$. Moreover $m(A_1) = m(A_2)$. Thus $\int_{A_2} |z|^{-\beta} dm(z) \leq \int_{A_1} |z|^{-\beta} dm(z)$ and so
\begin{multline}
\int_{B(z_0,1)} |z|^{-\beta} dm(z) = \int_A |z|^{-\beta} dm(z) + \int_{A_2} |z|^{-\beta} dm(z) \\\leq \int_A |z|^{-\beta} dm(z) + \int_{A_1} |z|^{-\beta} dm(z) = \int_\Omega |z|^{-\beta} dm(z).
\end{multline}
Also $\int_\Omega |z|^{-\beta}dm(z) = \int_{\partial\Omega}\int_0^1 r^{-\beta}\cdot r dr d\sigma(\theta) = \frac{2\pi}{2-\beta}$.

The second claim follows from Minkowski's integral inequality and the first claim:
\begin{equation}
\begin{split}
\Big( \int_\Omega \big| &\int_{\partial \Omega'} |z-z_0|^{-\beta} d\sigma(z) \big|^2 dm(z_0) \Big)^{1/2} \leq \int_{\partial\Omega'} \Big( \int_\Omega |z-z_0|^{-2\beta} dm(z_0) \Big)^{1/2} d\sigma(z) \\
&\leq \int_{\partial\Omega'} \sqrt{\frac{2\pi}{2-2\beta}} d\sigma(z) = \sigma(\partial\Omega') \sqrt{\frac{\pi}{1-\beta}},
\end{split}
\end{equation}
because $2\beta < 2$.
\end{proof}

This lemma is needed only in the proof of lemma \ref{restNorms}.
\begin{lemmaSect}
\label{lemmaB3}
Let $1\leq p<\infty$, $q\in L^p(\Omega)$, $z_0 \in \Omega$ and $\epsilon > 0$. Now there is a test function $q_{\epsilon, z_0} \in C_0^\infty(\Omega)$ satisfying $\lVert q - q_{\epsilon, z_0} \rVert_p < \epsilon$, $z_0 \notin \supp q_{\epsilon, z_0}$ and
\begin{equation}
\sup_{z_0\in\Omega} \Big\lVert \frac{q_{\epsilon,z_0}}{z-z_0} \Big\rVert_\infty, \sup_{z_0\in\Omega} \Big\lVert \partial \frac{q_{\epsilon,z_0}}{z-z_0} \Big\rVert_\infty, \sup_{z_0\in\Omega} \Big\lVert \wbar{\partial} \frac{q_{\epsilon,z_0}}{\wbar{z}-\wbar{z_0}} \Big\rVert_\infty < \infty.
\end{equation}
\end{lemmaSect}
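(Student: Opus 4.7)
The plan is a density approximation combined with a $z_0$-dependent cutoff that depends on $z_0$ only through translation, so that every constant comes out uniform in $z_0$ automatically. First, by density of $C_0^\infty(\Omega)$ in $L^p(\Omega)$, pick a fixed $\tilde q \in C_0^\infty(\Omega)$ with $\lVert q - \tilde q \rVert_p < \epsilon/2$. Then fix once and for all a radial model cutoff $\psi \in C^\infty(\R)$ with $0 \le \psi \le 1$, $\psi = 0$ on $(-\infty, 1]$ and $\psi = 1$ on $[2,\infty)$, and for a small $\rho > 0$ to be chosen, set $\chi_{z_0}(z) = \psi(|z-z_0|/\rho)$ and $q_{\epsilon,z_0} = \chi_{z_0}\,\tilde q$.

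By construction $q_{\epsilon,z_0}$ is a product of two smooth functions supported inside $\supp\tilde q \Subset \Omega$, hence $q_{\epsilon,z_0} \in C_0^\infty(\Omega)$; and $\chi_{z_0}$ vanishes on $B(z_0,\rho)$, so $z_0 \notin \supp q_{\epsilon,z_0}$. The approximation error is controlled by
$$\lVert \tilde q - q_{\epsilon,z_0}\rVert_p = \lVert (1-\chi_{z_0})\tilde q\rVert_p \le \lVert\tilde q\rVert_\infty \cdot m(B(z_0,2\rho))^{1/p} = \lVert\tilde q\rVert_\infty (4\pi)^{1/p} \rho^{2/p},$$
which is crucially independent of $z_0$. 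Choosing $\rho$ small enough (depending only on $\epsilon$ and $\tilde q$) drives this below $\epsilon/2$, giving the required $L^p$ approximation of $q$ by the triangle inequality.

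For the three uniform sup-norm bounds, the key observation is that on $\supp q_{\epsilon,z_0}$ one has $|z-z_0| \ge \rho$, while on $\supp \partial\chi_{z_0} \cup \supp \wbar{\partial}\chi_{z_0}$ one has $\rho \le |z-z_0| \le 2\rho$, together with $|\partial\chi_{z_0}|, |\wbar{\partial}\chi_{z_0}| \le \lVert\psi'\rVert_\infty/(2\rho)$ by translation invariance of the construction. Therefore $\lVert q_{\epsilon,z_0}/(z-z_0)\rVert_\infty \le \lVert\tilde q\rVert_\infty / \rho$, and the product-rule expansion
$$\partial\frac{q_{\epsilon,z_0}}{z-z_0} = \frac{(\partial\chi_{z_0})\tilde q + \chi_{z_0}\,\partial\tilde q}{z-z_0} - \frac{\chi_{z_0}\,\tilde q}{(z-z_0)^2}$$
is bounded in $L^\infty$ by a constant of order $(\lVert\tilde q\rVert_\infty + \lVert\partial\tilde q\rVert_\infty)/\rho^2$, again with $\rho$ fixed independently of $z_0$. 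The estimate for $\wbar{\partial}(q_{\epsilon,z_0}/(\wbar{z}-\wbar{z_0}))$ is verbatim the same after complex conjugation.

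There is no genuine obstacle here; the whole point of the construction is that $z_0$ enters only through translation, so the measures of the relevant balls, the gradient bounds of $\chi_{z_0}$, and the separation $|z-z_0| \ge \rho$ on the support are all $z_0$-independent. The only care required is to select $\tilde q$ first, then fix $\rho$ in terms of $\epsilon$ and $\tilde q$, and only then vary $z_0$, rather than doing these choices in the opposite order.
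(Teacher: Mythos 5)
Your proposal is correct and follows essentially the same route as the paper: fix a smooth $z_0$-independent approximant $\tilde q$ of $q$ first, then multiply by a cutoff that vanishes near $z_0$ and depends on $z_0$ only by translation, and choose the cutoff radius $\rho$ in terms of $\epsilon$ and $\tilde q$ (not $z_0$). The paper leaves the three sup-norm bounds as ``easy''; your explicit product-rule expansion and the bounds $\rho\le|z-z_0|\le 2\rho$ on $\supp\nabla\chi_{z_0}$ fill in exactly the calculation the paper is gesturing at.
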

\begin{proof}
Let $q_\epsilon \in C_0^\infty(\Omega)$ be such that $\lVert q - q_\epsilon \rVert_p < \frac{\epsilon}{2}$ and $q_\epsilon \neq 0$. Denote $\delta = \big(\frac{\epsilon}{2\pi^{1/p} \lVert q_\epsilon \rVert_\infty}\big)^\frac{p}{2}$. Let $\varphi \in C^\infty(\C)$ be such that $\varphi(z) = 0$ when $|z| < \frac{\delta}{2}$ and $\varphi(z) = 1$ when $|z| > \delta$. This can be constructed using the smooth step function of lemma \ref{perusSilea}.

Now choose $q_{\epsilon,z_0}(z) = q_\epsilon(z) \varphi(z-z_0)$. Then
\begin{equation}
\begin{split}
\lVert q - q_{\epsilon,z_0} \rVert_p &\leq \lVert q - q_\epsilon\rVert_p + \lVert q_\epsilon(1-\varphi) \rVert_p < \frac{\epsilon}{2} + \lVert q_\epsilon \rVert_\infty \lVert \chi_{\supp(1-\varphi)} \rVert_p \\
&\leq \frac{\epsilon}{2} + \lVert q_\epsilon \rVert_\infty \pi^{1/p} \delta^{2/p} = \epsilon.
\end{split}
\end{equation}
The three norm estimates are easy. It is enough to note that $q_{\epsilon,z_0}(z) = 0$ if $|z-z_0| < \frac{\delta}{2}$ and $\delta$ does not depend on $z_0$. The only thing $z_0$ does is to shift the factor $\varphi$. Thus we get uniform bounds for the norms involving $\varphi$ or its derivatives.
\end{proof}

\newpage
\addcontentsline{toc}{section}{Bibliography}

\end{document}